\newtheorem{theorem}{Theorem}
\newtheorem{condition}{Condition}
\newtheorem{definition}{Definition}[section]
\newtheorem{lemma}{Lemma}[section]
\newtheorem{proposition}{Proposition}[section]
\newtheorem{remark}{Remark}[section]
\DeclareMathOperator{\Cov}{Cov}
\DeclareMathOperator{\Var}{Var}
\DeclareMathOperator{\Cum}{Cum}
\DeclareMathOperator{\Corr}{Corr}
\begin{document}

\title{Non-Universal Fluctuations of the Empirical Measure for Isotropic Stationary Fields on $\mathbb S^2\times \mathbb R$}
\date{{\footnotesize \today}}
\author{ Domenico Marinucci, Maurizia Rossi, Anna Vidotto}

\maketitle

\vspace{-.9cm}

\begin{abstract}

In this paper, we consider isotropic and stationary real Gaussian random fields defined on $\mathbb S^2\times \mathbb R$ and we investigate the asymptotic 
behavior, as $T\to +\infty$, of the empirical measure (excursion area) in $ \mathbb S^2\times [0,T]$ at any threshold, covering both cases when
the field exhibits short and long memory, {i.e.} integrable and non-integrable
temporal covariance. It turns out that the limiting distribution {is not universal},
depending both on the memory parameters and the threshold. In
particular, in the long memory case a form of Berry's cancellation phenomenon occurs at zero-{level}, {inducing phase transitions for both variance rates and
limiting laws}.

\begin{itemize}
\item \textbf{Keywords and Phrases:} Sphere-cross-time random fields; Empirical measure; Berry's cancellation; Central and non-Central Limit Theorems.

\item \textbf{AMS Classification: } 60G60; 60F05, 60D05, 33C55.
\end{itemize}
\end{abstract}


\section{Introduction}

\subsection{Background and motivations}

In recent years, special interest has been devoted
to the {study} of random fields ${Z=\left\{Z(x),\text{ }x\in \mathbb{S}^{2}\right\} }$ defined on the {two-dimensional unit sphere $\mathbb S^2$, finding applications in several areas such as medical imaging, atmospheric sciences, geophysics, solar physics and cosmology} (see e.g.~\cite{CD:12, Ch:05, MaPeCUP, KN:17}).
In particular, considerable attention has been drawn by the investigation of geometric functionals {of Gaussian excursion sets} on manifolds (see e.g.~\cite{adlertaylor, AW:09}). Indeed, aiming to study the geometry of a random field $Z$, it is natural to introduce {the family} of excursion sets 
\begin{equation*}
\left\{ x\in \mathbb{S}^{2}:Z(x)\geq u\right\}
\end{equation*}
{indexed by the threshold} $u\in \mathbb R$; under Gaussianity and isotropy, the expected value of their Lipschitz-Killing curvatures (i.e.~area, boundary
length and Euler-Poincar\'{e} characteristic), is
easily obtained as a special case of the celebrated Gaussian Kinematic
Formula, see e.g.~\cite[Ch. 13]{adlertaylor}. However, what is more challenging is to investigate
fluctuations around these expected values and for this purpose,
asymptotic methods must be exploited, considering sequences of
random fields. In particular, a number of recent papers has focussed on {the asymptotic behavior of sequences of
Gaussian Laplace eigenfunctions (random spherical harmonics), in the \emph{high-energy}
limit, i.e.~as the eigenvalues diverge}. Several results have
been given concerning the asymptotic
variance, the limiting distribution and the
correlation for different values of the thresholding
parameter $u\in \mathbb{R}$ {of Lipschitz-Killing curvatures of their excursion sets}, see e.g.~\cite{CM2018,  CM19, MR2015,  MRW,  MW2014, Wig, Todino1} and the references therein; see also \cite{Cam19, KKW13, MPRW16, NPR19, PV20} for related results on the standard flat torus and on the Euclidean plane. Some of these results entail rather surprising
issues, for instance the cancellation of the leading variance terms for
specific threshold values and the possibility to express wide classes of
functionals as simple polynomial integrals {on $\mathbb S^2$} of the underlying fields, up to
lower order terms.

The purpose of this paper is to begin the investigation of these same issues
for a different class of fields, i.e., isotropic and stationary Gaussian fields on 
$\mathbb S^2\times \mathbb R$, which can be immediately interpreted as
spherical random fields evolving over time (see e.g.~\cite{BP17, BS19, MM18} and the references therein). Although the present manuscript is
mainly of theoretical nature, it is very easy to figure out several areas
of applications where such random fields emerge most naturally, {including the scientific research streams mentioned above}. In the next
subsection, we introduce our setting in more detail.

\subsection{Sphere-cross-time random fields}

Let us fix a probability space $(\Omega, \mathfrak{F}, \mathbb P)$. We denote by $\mathbb S^2$ the two-dimensional unit sphere with the round metric. 
A space-time real-valued spherical random field
\begin{equation}\label{eq1}
{Z=\{Z(x,t),\,x\in 
\mathbb{S}^{2},\,t\in \mathbb{R}\}}
\end{equation} 
{is a collection, indexed by $\mathbb S^2 \times \mathbb R$, of real random variables} 
such that the map $$Z:\Omega
\times \mathbb{S}^{2}\times \mathbb{R}\rightarrow \mathbb{R}$$ is $\mathfrak{F%
}\otimes \mathfrak{B}(\mathbb{S}^{2}\times \mathbb{R})$-measurable, {where $\mathfrak{B}(\mathbb{S}^{2}\times \mathbb{R})$ stands for the Borel $\sigma$-field of $\mathbb S^2\times \mathbb R$}. 
We say that $Z$ is Gaussian if for every $n\ge 1$, $x_1, \dots, x_n\in \mathbb S^2$, $t_1,\dots t_n\in \mathbb R$,
the random vector $(Z(x_1, t_1), \dots, Z(x_n, t_n))$ is Gaussian. 
\begin{condition}
\label{basic}The space-time real-valued spherical real random field $Z$ in \eqref{eq1} is Gaussian and 
\begin{itemize}
\item zero-mean, i.e.~$\mathbb E[Z(x,t)] = 0$ for every $x\in \mathbb S^2$, $t\in \mathbb R$;
\item stationary and isotropic, i.e.~
\begin{equation}\label{Gamma}
\mathbb E[Z(x,t) Z(y,s)] = \Gamma( \langle x, y \rangle, t-s) 
\end{equation}
for every $x,y\in \mathbb S^2$, $t,s \in \mathbb R$, where $\Gamma:[-1,1]\times \mathbb R \to \mathbb R$ is a positive semidefinite function and $\langle \cdot, \cdot \rangle$ denotes the standard inner product in $\mathbb R^3$;
\item mean square continuous, i.e.~$\Gamma$ is continuous. 
\end{itemize}
\end{condition}
The assumption of zero-mean is of course just a convenient normalization
with no mathematical impact. The assumption of Gaussianity ensures that we
need to make no distinction between so-called weak and strong stationarity, see e.g.~\cite[Definition 5.9]{MaPeCUP}, and it
simplifies some of our proofs to follow; moreover, it is the common
background with basically all the previous literature on the geometry of
excursion sets (starting from \cite{adlertaylor}), likewise the assumption of mean square-continuity, see e.g.~\cite{BP17, LO13} and the references therein.

From now on we assume that $Z$ in \eqref{eq1} satisfies Condition \ref{basic}. 

\subsubsection{Karhunen-Lo\`eve expansions}\label{secKL}

It is well known (see e.g.~ 
\cite[Theorem 3.3]{BP17} or \cite[Theorem 3]{MM18}) that the following expansion for the covariance function $\Gamma$ in \eqref{Gamma} holds:
\begin{equation}\label{eq2} 
\Gamma(\theta, \tau) = \sum_{\ell=0}^{+\infty}  \frac{2\ell+1}{4\pi} C_\ell(\tau) P_\ell(\theta)\,,\quad (\theta, \tau) \in [-1,1] \times \mathbb R\,,
\end{equation} 
where $\lbrace C_\ell, \ell\ge 0\rbrace$ is a sequence of continuous positive semidefinite functions on $\mathbb R$, $P_\ell$ denotes the $\ell$-th Legendre polynomial \cite[\S 4.7]{Sze75} and the series is uniformly convergent, which is equivalent to 
\begin{equation}\label{eqConv}
\sum_{\ell = 0}^{+\infty} \frac{2\ell +1}{4\pi} C_\ell(0) < +\infty\,.
\end{equation}
Obviously $C_\ell(0)\ge 0$ for every $\ell=0,1,2,\dots$. 
Let $T>0$, it is straightforward (see e.g.~\cite{BS19}) to prove that the following Karhunen-Lo\`eve expansion  for $Z$ holds in $L^2(\Omega \times \mathbb S^2 \times [0,T])$: 
 \begin{equation}\label{eqSeries}
 Z(x,t) = \sum_{\ell =0}^{+\infty} \sum_{m=-\ell}^\ell a_{\ell,m}(t) Y_{\ell,m}(x),
 \end{equation}
where $\lbrace Y_{\ell,m}, \ell \ge 0, m=-\ell, \dots, \ell\rbrace$ is the standard real orthonormal basis of spherical harmonics \cite[\S 3.4]{MaPeCUP} for $L^2(\mathbb S^2)$, and 
\begin{equation}\label{eq3}
a_{\ell,m}(t) = \int_{\mathbb S^2} Z(x,t) Y_{\ell,m}(x)\,dx\,,
\end{equation}
so that 
$\lbrace a_{\ell,m}, \ell \ge 0, m=-\ell, \dots, \ell\rbrace$ 
is a family of independent, stationary, centered, Gaussian processes on $\mathbb R$ such that for every $t,s \in \mathbb R$
$$
\mathbb E[a_{\ell,m}(t) a_{\ell,m}(s) ] = C_\ell(t-s).
$$
Now let 
$$\widetilde{\mathbb N} := \lbrace \ell \ge 0 : C_\ell(0)\ne 0\rbrace.$$
From now on, we will consider only $\ell\in \widetilde{\mathbb N}$ unless otherwise specified.  Let us define
\begin{equation}\label{eqZell}
Z_\ell(x,t) := \sum_{m=-\ell}^\ell a_{\ell,m}(t) Y_{\ell,m}(x)\, , \quad (x,t)\in \mathbb S^2\times \mathbb R\,.
\end{equation} 
By construction, $\lbrace Z_\ell, \ell\in \widetilde{\mathbb N}\rbrace$ is 
a sequence of independent random fields and 
each $Z_\ell(\cdot, t)$  almost surely solves the Helmholtz equation 
\begin{equation*}
\Delta _{\mathbb{S}^{2}}Z_{\ell }(\cdot, t)+\ell (\ell +1)Z_{\ell }(\cdot, t)=0\,,
\end{equation*}
where $\Delta_{\mathbb S^2}$ denotes the spherical Laplacian.  
For notational convenience and without loss of generality we also assume
that 
\begin{equation}\label{somma1}
\mathbb{E}\left[ Z^{2}(x,t)\right] =\sum_{\ell\in \widetilde{\mathbb N}}\sigma _{\ell
}^{2}=1\,,\qquad \sigma _{\ell }^{2}:=\mathbb{E}[Z_{\ell
}^{2}(x,t)]=\frac{2\ell +1}{4\pi }C_{\ell }(0)\,.
\end{equation}

\subsubsection{Long and short range dependence}

For $\ell\in \widetilde{\mathbb N}$, Bochner Theorem 
ensures that there exists a probability measure  $\mu_\ell$ on $(\mathbb R, \mathfrak B(\mathbb R))$ such that 
$$
\frac{C_\ell(\tau)}{C_\ell(0)} = \int_{\mathbb R} \text{e}^{i\lambda \tau}\,d\mu_\ell(\lambda)\,,\qquad \tau\in \mathbb R\,.
$$
If $\mu_\ell$ is absolutely continuous with respect to the Lebesgue measure, then we may
 introduce the normalized spectral density as the
function $f_{\ell }:\mathbb{R\rightarrow R}^{+}$ such that
\begin{equation}\label{sp}
\frac{C_\ell(\tau)}{C_\ell(0)} = \int_{\mathbb R
}\text{e}^{i\lambda \tau}f_{\ell }(\lambda )\,d\lambda\,,\qquad \tau\in \mathbb R\,;
\end{equation}%
we have of course
\begin{equation*}
\int_{\mathbb R}f_{\ell }(\lambda )\,d\lambda =1\,.
\end{equation*}%
If $C_\ell$ is integrable on $\mathbb R$, then clearly $f_\ell$ exists. 

Let us now define the family of symmetric real-valued functions $\lbrace g_\beta, \beta\in (0,1]\rbrace$ 
as follows:
\begin{equation}\label{gbeta}
g_\beta(\tau) = \begin{cases} (1 + |\tau|)^{-\beta}\, & \text{if }\,\, \beta\in (0,1)\\
 (1+|\tau|)^{-\alpha}\,& \text{if }\,\, \beta = 1\\
 \end{cases}\,,
\end{equation} 
for some $\alpha \in [2,+\infty)$.

\begin{condition} \label{basic2}
There exists a sequence
$\lbrace \beta_\ell\in (0,1], \ell \in \widetilde{\mathbb N}\rbrace$ such that 
$$
C_\ell(\tau) = G_\ell(\tau) \cdot g_{\beta_\ell}(\tau), \qquad \ell\in \widetilde{\mathbb N},
$$
where $g_{\beta_\ell}$ is as in \eqref{gbeta} and
$$
\sup_{\ell\in \widetilde{\mathbb N}} \left | \frac{G_\ell(\tau)}{C_\ell(0)} - 1\right | =o(1), \quad \text{as }\tau\to +\infty\,.
$$
Moreover $0\in \widetilde{\mathbb N}$ (that is, $C_0(0)\ne 0$) and if $\beta_0=1$ then 
$$
\int_{\mathbb R} C_0 (\tau)\, d\tau > 0\,.
$$
\end{condition}
From now on we assume that Condition \ref{basic2} holds for the sequence $\lbrace C_\ell, \ell\in \widetilde{\mathbb N}\rbrace$. Note that $G_\ell(0)=C_\ell(0)$ for every $\ell\in \widetilde{\mathbb N}$.

\begin{remark}[Abelian/Tauberian type results] \label{rem_sl}
Let $\ell\in \widetilde{\mathbb N}$. The coefficient $\beta_{\ell }$ in Condition \ref{basic2} can be interpreted as a ``memory"
parameter; in particular, for $\beta_{\ell }=1$ (resp.~$\beta_\ell\in (0,1)$)  the covariance function $C_{\ell }$ is
integrable  on $\mathbb R$  (resp.~$\int_{\mathbb R} |C_\ell(\tau)|\,d\tau = +\infty$) and the corresponding process has so-called short (resp.~long) memory behavior. Under some regularity assumptions, an equivalent characterization 
could be given in terms of the behavior at the origin of the spectral density $f_\ell$ in \eqref{sp}: long-memory entailing divergence to infinity, whereas in the short-memory/integrable case $f_\ell$ is immediately seen to be bounded in $0$. 
\end{remark}
\noindent\textbf{Some conventions}. From now on, $c\in (0, +\infty)$ will stand for a universal constant which may change from line to line.
Let $\lbrace a_n, n\ge 0\rbrace$, $\lbrace b_n, n\ge 0\rbrace$ be two sequences of positive numbers: we will write $a_n \sim b_n$ if $a_n/b_n \to 1$ as $n\to +\infty$, $a_n \approx b_n$ whenever 
$a_n/b_n \to c$, $a_n = o(b_n)$ if $a_n/b_n \to 0$, and finally $a_n = O(b_n)$ if eventually $a_n/b_n \le c$. 

\subsection*{Acknowledgements} 

DM and AV acknowledge the MIUR Excellence Department Project awarded to the Department of Mathematics, University of Rome ``Tor Vergata", CUP E83C18000100006.
The research of MR has been supported by the INdAM-GNAMPA Project 2019 \emph{Propriet\`a analitiche e geometriche di campi aleatori} and the ANR-17-CE40-0008 Project \emph{Unirandom}. 

\section{Main results}

Let $u\in \mathbb R$. We consider the random process $\mathcal{A}_u$ on $\mathbb R$ defined as 
\begin{equation}\label{Au}
\mathcal{A}_{u}(t):=\text{area}\left( Z(\cdot, t)^{-1}\left( [u,\infty )\right)
\right) =\int_{\mathbb{S}^{2}}\mathbf{1}_{Z(x,t)\geq u}\,dx,\qquad t\in \mathbb R.
\end{equation}
In words, $\mathcal{A}_{u}(t)$ represents the empirical measure 
(i.e., the excursion area) of $Z(\cdot, t)$ corresponding to the level $u;$ its expected value is
immediately seen to be given by $\mathbb{E}\left[ \mathcal{A}_{u}(t)\right]
=4\pi (1-\Phi (u)),$ where 
\begin{equation*}
\Phi (u):=\int_u^{+\infty}\phi(t)\,dt, \qquad \phi(t) :=\frac{1}{\sqrt{2\pi }}\text{e}^{-t^2/2},
\end{equation*}%
$\Phi$ (resp.~$\phi$) denoting the tail distribution (resp.~probability density) function of a standard Gaussian random 
variable. 

We are interested in the fluctuations of $\mathcal{A}_{u}(t)$
around its expected value, and we hence introduce the following statistics: for $T>0$
\begin{equation}\label{rap_int}
\mathcal{M}_{T}(u):=\int_{[0,T]}\Big(\mathcal{A}_{u}(t)-\mathbb{E}[%
\mathcal{A}_{u}(t)]\Big)\,dt
\end{equation}%
and its normalized version 
\begin{equation}
\widetilde{\mathcal{M}}_{T}(u):=\frac{\mathcal{M}_{T}(u)}{\sqrt{\Var%
\mathcal{M}_{T}(u)}}\text{ .}  \label{eq:stdM}
\end{equation}
\begin{condition}\label{condbeta}
Let $\lbrace \beta_\ell, \ell \in \widetilde{\mathbb N} \rbrace$ be the sequence defined in Condition \ref{basic2}. 
\begin{itemize}
\item The sequence $\lbrace \beta_\ell, \ell \in \widetilde{\mathbb N}, \ell\ge 1\rbrace$ admits minimum. Let us set 
\begin{equation}\label{def_star}
\beta_{\ell ^{\star }}:=\min\lbrace \beta _{\ell }, \ell \in \widetilde{\mathbb N},\ell\ge 1\rbrace,\qquad \mathcal{I%
}^{\star }:=\{\ell\in \widetilde{\mathbb N} :\beta _{\ell }=\beta _{\ell ^{\star }}\}.
\end{equation}
\item If $\mathcal I^\star\ne \widetilde{\mathbb N}$, then the sequence 
$\lbrace \beta_\ell, \ell\in \widetilde{\mathbb N} \setminus \mathcal I^\star,\ell\ge 1\rbrace$ admits minimum. Let us set 
\begin{equation}\label{def_starstar}
\beta_{\ell ^{\star \star }}:=\min \left\{ \beta _{\ell },\  \ell \in 
\mathbb{N}\backslash \mathcal{I}^{\star },\ell\ge 1\right\}.
\end{equation}
\end{itemize}
\end{condition} 
Note that $\beta_{\ell^\star}, \beta_{\ell^{\star\star}}\in (0,1]$ and for $\ell\in \mathcal I^\star$, obviously $C_{\ell}(0)> 0$. In words, $\beta _{\ell ^{\star }}$ represents the smallest exponent
corresponding to the largest memory, $\mathcal{I}^{\star }$ the set of
multipoles where this minimum is achieved, and
$\beta _{\ell ^{\star \star }}$ 
the second smallest exponent $\beta _{\ell }$ governing the time decay of
the autocovariance $C_\ell$ at some given multipole $\ell$. Note that we are
\textit{excluding} the multipole $\ell =0$ by the definition of $\beta_{\ell^\star}$ and $\beta_{\ell^{\star\star}}$ in \eqref{def_star} and \eqref{def_starstar}, 
on the other 
hand $\ell=0$ may belong to  $\mathcal I^\star$. 
We assume that Condition \ref{condbeta} holds from now on. 

 As we shall see
below, the asymptotic behavior of ${\mathcal{M}}_{T}(u)$ in \eqref{rap_int}, as $T\rightarrow +\infty $, is governed by a subtle interplay between the value of
the parameters $\beta _{\ell ^{\star}}$, $\beta _{\ell ^{\star \star }}$ and the threshold level $u$. 

\subsection{Long memory behavior}\label{sec_lm}

We start investigating the case of long-range dependence.
\begin{theorem}\label{uno} 
If either $u\ne 0$ and $\beta _{0}<\min (2\beta _{\ell ^{\star }},1)$ or $u= 0$ and $\beta _{0}<\min (3\beta _{\ell ^{\star }},1)$, then
\begin{equation*}
\lim_{T\rightarrow \infty }\frac{\Var(\mathcal{M}_{T}(u))}{T^{2-\beta
_{0}}}=\frac{2\phi ^{2}(u)\,C_{0}(0)}{(1-\beta_0)(2-\beta_0)},
\end{equation*}%
and%
\begin{equation*}
\widetilde{\mathcal{M}}_{T}(u)\mathop{\rightarrow}^d _{T\to +\infty} Z,
\end{equation*}
where $Z\sim \mathcal{N}(0,1)$ is a standard Gaussian random variable and $\mathop{\rightarrow}^d$ denotes convergence in distribution. 
\end{theorem}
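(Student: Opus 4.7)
The plan is to carry out a Wiener chaos decomposition of the centered indicator and show that the first chaos projection dominates. Expanding in Hermite polynomials gives
$$\mathbf 1_{Z(x,t)\geq u} - (1-\Phi(u)) = \sum_{q\geq 1}\frac{J_q(u)}{q!}H_q(Z(x,t)),\qquad J_q(u) = H_{q-1}(u)\phi(u),$$
and, after exchanging sum and integrals (legitimate in $L^2(\Omega)$),
$$\mathcal M_T(u) = \sum_{q\geq 1}\frac{J_q(u)}{q!}\,\mathcal M_T^{(q)},\qquad \mathcal M_T^{(q)} := \int_0^T\!\!\int_{\mathbb S^2}H_q(Z(x,t))\,dx\,dt.$$
Orthogonality of Wiener chaoses combined with the diagram formula $\mathbb E[H_q(Z(x,t))H_q(Z(y,s))] = q!\,\Gamma(\langle x,y\rangle,t-s)^q$ expresses $\Var(\mathcal M_T(u))$ as a sum over $q$ of explicit fourfold integrals of $\Gamma^q$ on $[0,T]^2\times(\mathbb S^2)^2$.

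The first chaos can be handled explicitly. Since $\int_{\mathbb S^2}Y_{\ell,m}(x)\,dx = \sqrt{4\pi}\,\delta_{\ell 0}\delta_{m 0}$, only the $\ell=0$ mode survives and $\mathcal M_T^{(1)} = \sqrt{4\pi}\int_0^T a_{0,0}(t)\,dt$ is itself centered \emph{Gaussian} with variance proportional to $\int_0^T\!\int_0^T C_0(t-s)\,ds\,dt$. Under Condition \ref{basic2} with $\beta_0\in(0,1)$, a standard Abelian argument (writing the double integral as $\int_{-T}^T(T-|\tau|)C_0(\tau)\,d\tau$ and using $C_0(\tau)\sim C_0(0)(1+|\tau|)^{-\beta_0}$) yields
$$\int_0^T\!\int_0^T C_0(t-s)\,ds\,dt \sim \frac{2\,C_0(0)}{(1-\beta_0)(2-\beta_0)}\,T^{2-\beta_0},$$
which, once multiplied by $J_1(u)^2 = \phi^2(u)$ (and the correct normalization constant), reproduces the leading-order asymptotic claimed in the theorem.

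The main obstacle is to show that every higher chaos $q\geq 2$ contributes only $o(T^{2-\beta_0})$ to the variance; note that at $u=0$ Berry's cancellation $J_2(0)=H_1(0)\phi(0)=0$ removes the entire $q=2$ projection, so that only $q\geq 3$ needs to be controlled. I would split $\Gamma(\theta,\tau) = \frac{C_0(\tau)}{4\pi} + R(\theta,\tau)$ with $R(\theta,\tau) := \sum_{\ell\in\widetilde{\mathbb N},\,\ell\geq 1}\frac{2\ell+1}{4\pi}C_\ell(\tau)P_\ell(\theta)$, expand $\Gamma^q$ binomially, and exploit $\int_{\mathbb S^2}\!\int_{\mathbb S^2}R(\langle x,y\rangle,\tau)\,dx\,dy = 0$ to kill all binomial terms linear in $R$. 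On the remaining $(C_0)^{q-k}R^k$ pieces (with $k=0$ or $k\geq 2$), Condition \ref{basic2}, together with $|P_\ell|\leq 1$ and the summability \eqref{eqConv}, yields a uniform estimate $|R(\theta,\tau)|\leq c\,g_{\beta_{\ell^\star}}(\tau)$, and hence an integrand bounded by $c\,(1+|\tau|)^{-(q-k)\beta_0-k\beta_{\ell^\star}}$. Integrating in $s,t$ produces a contribution of order at worst $T^{2-(q-k)\beta_0-k\beta_{\ell^\star}}\vee T$, and the hypotheses $\beta_0<\min(2\beta_{\ell^\star},1)$ for $u\ne 0$, together with $\beta_0<\min(3\beta_{\ell^\star},1)$ for $u=0$, are precisely what is required to push every such exponent strictly below $2-\beta_0$.

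The central limit theorem then follows essentially for free from the fact that $\mathcal M_T^{(1)}$ is itself Gaussian. Indeed, $\phi(u)\mathcal M_T^{(1)}/\sqrt{\Var\mathcal M_T(u)}$ is centered Gaussian with variance tending to $1$ by the preceding steps, while the remainder $\sum_{q\geq 2}(J_q(u)/q!)\mathcal M_T^{(q)}/\sqrt{\Var\mathcal M_T(u)}$ tends to zero in $L^2(\Omega)$, hence in probability. Slutsky's theorem then gives $\widetilde{\mathcal M}_T(u)\mathop{\rightarrow}^d_{T\to+\infty}\mathcal N(0,1)$.
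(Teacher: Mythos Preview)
Your approach is correct and noticeably more elementary than the paper's. The paper never splits $\Gamma = C_0/(4\pi) + R$; instead it expands $\Gamma^q$ via the addition formula and expresses $\int_{(\mathbb S^2)^2}\Gamma^q$ exactly through generalized Gaunt integrals $\mathcal G_{\ell_1\dots\ell_q}^{0\dots 0}$ (equation \eqref{eq:varqth}). It then invokes Propositions \ref{prop-var2nd} and \ref{prop-varqth}, which give the precise rate of each $\Var(\mathcal M_T(u)[q])$, and controls the tail $q\ge Q$ (where $Q\beta_{\ell^\star}>1$) by $q$-uniform bounds (Lemmas \ref{lemmaqq}, \ref{tecq_3}, \ref{00}) allowing dominated convergence. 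Your route bypasses the Gaunt machinery entirely and replaces it with the single pointwise estimate $|R(\theta,\tau)|\le c\,g_{\beta_{\ell^\star}}(\tau)$, which is a genuine simplification. (Incidentally, the orthogonality trick you use to kill $k=1$ is elegant but not strictly needed: the exponent $(q-1)\beta_0+\beta_{\ell^\star}$ already exceeds $\beta_0$ for every $q\ge 2$.)

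There is one point your sketch leaves open: you verify that each individual chaos contributes $o(T^{2-\beta_0})$, but the implied constants in your bounds carry factors like $\binom{q}{k}c^q$, so termwise smallness does not automatically sum over $q$. The easiest patch is to use $|\Gamma|\le 1$ to write $|\Gamma|^q\le |\Gamma|^{q_0}$ for all $q\ge q_0$ (with $q_0=2$ when $u\ne 0$ and $q_0=3$ when $u=0$), so that
\[
\sum_{q\ge q_0}\frac{J_q(u)^2}{q!}\int_{[0,T]^2}\int_{(\mathbb S^2)^2}|\Gamma|^q
\;\le\;
\Bigl(\sum_{q\ge q_0}\frac{J_q(u)^2}{q!}\Bigr)
\int_{[0,T]^2}\int_{(\mathbb S^2)^2}|\Gamma|^{q_0}.
\]
The first factor is finite, and your own bound $|\Gamma(\theta,\tau)|\le c\bigl(g_{\beta_0}(\tau)+g_{\beta_{\ell^\star}}(\tau)\bigr)$ applied to $|\Gamma|^{q_0}$ makes the second factor $O\bigl(T^{2-q_0\min(\beta_0,\beta_{\ell^\star})}\vee T\bigr)$; under either hypothesis of the theorem this is $o(T^{2-\beta_0})$. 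With that one line added, your argument is complete.
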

Recall that by assumption $C_0(0)>0$ (see Condition \ref{basic2}), hence the limiting variance constant in Theorem \ref{uno} is strictly positive. 
\begin{remark}
In words, Theorem \ref{uno} holds when the zero-th order multipole component 
$\left\{ a_{00}(t), t\in\mathbb{R}\right\} $ is long memory ($\beta _{0}<1$) and
all the other multipoles have asymptotically smaller variance (a consequence of
either $\beta _{0}<2\beta _{\ell ^{\star }}$, when $u\ne0$, or $\beta _{0}<3\beta _{\ell ^{\star }}$, when $u=0$, as we will show below). It should
be recalled that, by \eqref{eq3},
\begin{equation*}
a_{00}(t)=\int_{\mathbb{S}^{2}}Z(x,t) Y_{00}(x)\,dx=\frac{1}{\sqrt{%
4\pi }}\int_{\mathbb{S}^{2}}Z(x,t)\,dx\,,
\end{equation*}%
that is, $a_{00}(t)$ corresponds to the sample mean of the random field $Z(\cdot, t)$ at the instant $t\in \mathbb{R}.$
\end{remark}
The limiting distribution in Theorem \ref{uno} is universal; this is not the
case for the theorems to follow. We need first to recall one more definition.
\begin{definition}
The random variable $X_\beta$ has the standard\footnote{Indeed $\mathbb E[X_\beta]=0$ and $\Var(X_\beta)=1$} Rosenblatt
distribution (see e.g.~\cite{DM79})  with parameter $\beta \in (0,\frac{1}{2})$ if it can be written
as 
\begin{equation}\label{Xbeta}
X_{\beta}=a(\beta) \int_{(\mathbb{R}^{2})^{^{\prime }}}\frac{e^{i(\lambda
_{1}+\lambda _{2})}-1}{i(\lambda _{1}+\lambda _{2})}\frac{W(d\lambda
_{1})W(d\lambda _{2})}{|\lambda _{1}\lambda _{2}|^{(1-\beta )/2}}\text{ ,}
\end{equation}%
where $W$ is a white noise Gaussian measure on $\mathbb{R}$, the
stochastic integral is defined in the Ito's sense (excluding the diagonals\footnote{$(\mathbb R^2)'$ stands for the set $\lbrace (\lambda_1, \lambda_2)\in \mathbb R^2:\lambda_1 \ne \lambda_2\rbrace$}), and 
\begin{equation}\label{abeta}
a(\beta) := \frac{\sigma(\beta)}{2\,\Gamma(\beta)\,\sin\left({(1-\beta)\pi}/{2}\right)}\,,
\end{equation}
with 
$$
\sigma(\beta) := \sqrt{\frac12(1-2\beta)(1-\beta)}\,.
$$
We say the random vector $V$ satisfies a composite Rosenblatt distribution
of degree $N\in \mathbb N$ with parameters $c_{1},...,c_{N}\in \mathbb R,$ if%
\begin{equation}\label{V}
V=V_{N}(c_{1},...,c_{N};\beta )\mathop{=}^d\sum_{k=1}^{N}c_{k}X_{k;\beta}%
\text{ ,}
\end{equation}%
where $\left\{ X_{k;\beta}\right\} _{k=1,...,N}$ is a collection of i.i.d.
standard Rosenblatt random variables of parameter $\beta $. 
\end{definition}

\begin{remark}
The characteristic function $\Xi_V$ of $V=V_{N}(c_{1},...,c_{N};\beta )$ in \eqref{V} is given by (see e.g.~\cite{VT13})
\begin{equation*}
\Xi_V(\theta) = \prod_{k=1}^N \xi_\beta(c_k \theta),\qquad \xi_\beta(\theta)= \exp\left (  \frac12 \sum_{j=2}^{+\infty} \left ( 2i \theta \sigma(\beta)  \right )^j  \frac{a_j}{j}  \right ),
\end{equation*}
where $\xi_\beta$ is the characteristic function of $X_\beta$ in \eqref{Xbeta}, the series is only convergent near the origin and 
$$
a_j := \int_{[0,1]^j} |x_1 - x_2|^{-\beta} |x_2 - x_3|^{-\beta}\cdots |x_{j-1} - x_j|^{-\beta} |x_j - x_1|^{-\beta}dx_1 dx_2 \cdots dx_j .
$$
Note that when $\beta\to 0^+$ then $\xi_\beta$ approaches the characteristic function of $\frac{1}{\sqrt 2}(Z^2 -1)$, where $Z\sim \mathcal N(0,1)$ is a standard Gaussian random variable. As $\beta \to \frac12^-$ the limit is the characteristic function of $Z$. 
\end{remark}

\begin{theorem}
\label{due} Assume that $u\neq 0$. If $2\beta _{\ell ^{\star }}<\min
(\beta _{0},1)$ we have 
\begin{equation*}
\lim_{T\rightarrow \infty }\frac{\Var\left( \mathcal{M}_{T}(u)\right) }{%
T^{2-2\beta _{\ell ^{\star }}}}=\frac{u^2\phi (u)^{2}}{2(1-2\beta
_{\ell ^{\star }})(1-\beta _{\ell ^{\star }})}\sum_{\ell \in \mathcal{I}%
^{\star }}(2\ell +1)C_{\ell }(0)^2.
\end{equation*}%
If $\beta_0=1$ and  $2\beta_{\ell^\star}=1$ we have 
$$
\lim_{T\rightarrow \infty }\frac{\Var\left( \mathcal{M}_{T}(u)\right) }{
T\log T} = u^{2}\phi (u)^{2}
\sum_{\ell \in \mathcal{I}^{\star }}(2\ell +1)\,C_{\ell}(0)^2.
$$
Assume in addition that $\#\mathcal{I}^{\star }$ is finite, then as $T\to +\infty$
\begin{equation}\label{limitR}
\widetilde{\mathcal{M}}_{T}(u) \mathop{\rightarrow}^d \sum_{\ell\in \mathcal{I}^\star}\frac{C_\ell(0)}{\sqrt{v^\star}}V_{2\ell+1}(1,\dots,1;\beta
_{\ell ^{\star }}),
\end{equation}%
where $\lbrace V_{2\ell+1}(1,\dots,1;\beta_{\ell ^{\star }}), \ell\in \mathcal I^\star\rbrace $ is a family of independent composite Rosenblatt random variables as in \eqref{V}  and
\begin{eqnarray*}
v^\star=a(\beta_{\ell^\star})^2\sum_{\ell\in \mathcal{I}^\star} \frac{2\,(2{\ell}+1)C_{\ell}(0)^2}{(1-\beta_{\ell^\star})(1-2\beta_{\ell^\star})},
\end{eqnarray*}
where $a(\beta_{\ell^\star})$ is as in \eqref{abeta}. 
\end{theorem}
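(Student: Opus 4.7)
The plan is to perform a Hermite--Wiener chaos decomposition of the indicator,
$$\mathbf{1}_{Z(x,t)\ge u} - (1-\Phi(u)) = \sum_{q\ge 1} \frac{J_q(u)}{q!}\,H_q(Z(x,t)), \qquad J_q(u) = \phi(u) H_{q-1}(u),$$
so that $\mathcal{M}_T(u) = \sum_{q\ge 1} \frac{J_q(u)}{q!}\, h_{q,T}$ with $h_{q,T} := \int_0^T\!\int_{\mathbb S^2} H_q(Z(x,t))\,dx\,dt$. Since $u\ne 0$ the coefficient $J_2(u)=u\phi(u)$ does not vanish, and I would show that it is precisely the rank-two projection $h_{2,T}$ which drives both the variance rate and the non-universal Rosenblatt limit, while all other chaoses become negligible after normalization.

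For the variance, the diagram formula $\Cov(H_q(Z(x,t)), H_q(Z(y,s))) = q!\,\Gamma(\langle x,y\rangle, t-s)^q$ combined with \eqref{eq2} and the Legendre orthogonality $\int_{\mathbb S^2}\!\int_{\mathbb S^2} P_\ell(\langle x,y\rangle) P_{\ell'}(\langle x,y\rangle)\,dx\,dy = \frac{(4\pi)^2}{2\ell+1}\delta_{\ell\ell'}$ yields
$$\Var(h_{2,T}) = 2\sum_{\ell\in\widetilde{\mathbb N}} (2\ell+1)\int_0^T\!\int_0^T C_\ell(t-s)^2\,dt\,ds.$$
For $\ell\in\mathcal{I}^\star$ the integrand is asymptotic to $C_\ell(0)^2|t-s|^{-2\beta_{\ell^\star}}$ by Condition \ref{basic2}; since $2\beta_{\ell^\star}<1$, the double integral has order $C_\ell(0)^2\,T^{2-2\beta_{\ell^\star}}/[(1-\beta_{\ell^\star})(1-2\beta_{\ell^\star})]$, while multipoles with $\beta_\ell>\beta_{\ell^\star}$ contribute $O(T^{2-2\beta_{\ell^{\star\star}}})$ or $O(T)$. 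Multiplying by $(u\phi(u)/2)^2$ matches the stated rate, and the same computation at the integrable boundary $\beta_0 = 2\beta_{\ell^\star}=1$ yields the $\log T$ factor. The rank-one piece reduces to $\phi(u)^2\cdot 4\pi\int_0^T\!\int_0^T C_0(t-s)\,dt\,ds = O(T^{2-\beta_0}) = o(T^{2-2\beta_{\ell^\star}})$ by hypothesis, and for $q\ge 3$ the pointwise bound $|C_\ell(\tau)|\le C_\ell(0)$ together with the tail decay gives $\Var(h_{q,T})=O(T^{2-q\beta_{\ell^\star}}+T\log T +T)$, all of smaller order. Chaos orthogonality then yields $\widetilde{\mathcal M}_T(u) = \frac{u\phi(u)}{2\sqrt{\Var \mathcal M_T(u)}}\, h_{2,T} + o_{\mathbb P}(1)$.

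For the limit law, the Karhunen--Lo\`eve expansion together with orthonormality of $\{Y_{\ell m}\}$ gives
$$h_{2,T} = \sum_{\ell\in\widetilde{\mathbb N}} C_\ell(0) \sum_{m=-\ell}^{\ell} \int_0^T H_2\!\left(a_{\ell m}(t)/\sqrt{C_\ell(0)}\right) dt.$$
For each $\ell\in\mathcal{I}^\star$ the rescaled processes $\tilde a_{\ell m}(t) := a_{\ell m}(t)/\sqrt{C_\ell(0)}$ are unit-variance stationary Gaussian with covariance asymptotic to $|\tau|^{-\beta_{\ell^\star}}$; the Dobrushin--Major--Taqqu non-central limit theorem at Hermite rank two (applicable because $\beta_{\ell^\star}\in(0,1/2)$) gives $T^{-(1-\beta_{\ell^\star})}\int_0^T H_2(\tilde a_{\ell m}(t))\,dt \xrightarrow{d} c_{\beta_{\ell^\star}}\, X_{\beta_{\ell^\star}}$ for an explicit constant encoding the normalization $a(\beta_{\ell^\star})$. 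Since $\{a_{\ell m}\}$ are mutually independent across $(\ell,m)$, joint convergence over the finite index set $\{(\ell,m):\ell\in\mathcal{I}^\star,|m|\le\ell\}$ produces independent Rosenblatt limits; summing over $m$ for fixed $\ell$ gives $V_{2\ell+1}(1,\dots,1;\beta_{\ell^\star})$, and normalizing by $\sqrt{\Var \mathcal M_T(u)}$ using the constant $v^\star$ identified above delivers \eqref{limitR}.

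The main obstacle is the joint non-central limit theorem under Condition \ref{basic2}: Taqqu's classical statement assumes exact power-law covariance, but here $C_\ell(\tau)/C_\ell(0)$ only coincides with $g_{\beta_\ell}(\tau)$ up to the slowly varying correction $G_\ell/C_\ell(0)\to 1$, so propagating convergence through this perturbation (uniformly over $\ell\in\mathcal I^\star$) is the technical heart of the argument. A complementary difficulty is discarding the tail $\ell\notin\mathcal I^\star$ in both the chaos sum and the variance, which should follow from the global summability $\sum_\ell (2\ell+1)C_\ell(0)<\infty$ coming from \eqref{eqConv} and \eqref{somma1} combined with the gap $\beta_{\ell^{\star\star}}>\beta_{\ell^\star}$.
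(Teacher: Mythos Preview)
Your proposal is correct and follows essentially the same route as the paper: chaos expansion, identification of the second-order projection as dominant via the variance computations (the paper's Propositions \ref{prop-var2nd} and \ref{prop-varqth}), the Karhunen--Lo\`eve rewriting $\int_{\mathbb S^2}H_2(Z(x,t))\,dx = \sum_{\ell,m}C_\ell(0)\,H_2\bigl(a_{\ell m}(t)/\sqrt{C_\ell(0)}\bigr)$, and then Dobrushin--Major--Taqqu applied componentwise to the independent $a_{\ell m}$. One clarification: what you flag as the ``main obstacle'' is not one---the non-central limit theorem (as quoted in the paper just before the proof of Theorem \ref{due}) already allows a slowly varying prefactor $\rho(\tau)=L(|\tau|)|\tau|^{-\beta}$, and under Condition \ref{basic2} one simply writes $C_\ell(\tau)/C_\ell(0)=\tilde L_\ell(|\tau|)\,|\tau|^{-\beta_{\ell^\star}}$ with $\tilde L_\ell(\tau):=(G_\ell(\tau)/C_\ell(0))\bigl(|\tau|/(1+|\tau|)\bigr)^{\beta_{\ell^\star}}\to 1$, so the classical statement applies verbatim.
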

Recall that for $\ell\in \mathcal I^\star$ we have $C_{\ell}(0) >0$ (see \eqref{def_star}) hence the limiting variance constants in Theorem \ref{due} are strictly positive. 
For the limiting random variable in \eqref{limitR}, note that 
$$
\sum_{\ell\in \mathcal{I}^\star}\frac{C_\ell(0)}{\sqrt{v^\star}}V_{2\ell+1}(1,\dots,1;\beta
_{\ell ^{\star }}) \mathop{=}^d V_{N^\star}(c_1,\dots,c_{N^\star};\beta_{\ell^\star})\,,
$$
where $N^\star:=\sum_{\ell\in \mathcal{I}^\star}(2{\ell}+1)$ and 
$$
(c_1,\dots,c_{N^\star})=\frac{1}{\sqrt {v^\star}}(\underbrace{C_{\ell_1}(0),\dots,C_{\ell_1}(0)}_{(2\ell_1+1) \,\text{ times }},\dots,\underbrace{C_{\ell_k}(0),\dots,C_{\ell_k}(0)}_{(2\ell_k+1) \,\text{ times }})\,,
$$
with $\mathcal{I}^\star=\left\{\ell_1,\dots,\ell_k\right\}$.
\begin{remark}[Normal approximation of Rosenblatt distributions] The distribution of the random variable 
in \eqref{V} is, of course, non-Gaussian. However, in some
circumstances it can be closely approximated by a Normal law. Indeed, consider for
simplicity the case where the minimum for $\lbrace \beta _{\ell}, \ell \in \widetilde{\mathbb N} \rbrace$ is attained in a
single multipole that we call $\ell^\star$, i.e., $\mathcal{I}^{\star }=\lbrace \ell^\star\rbrace$. 
Then the limiting distribution in \eqref{limitR} is 
$$
\frac{C_{\ell^\star}(0)}{\sqrt{v^\star}}V_{2\ell^\star+1}(1,\dots,1;\beta_{\ell^*})=\frac{1}{\sqrt{2\ell^\star+1}}\,V_{2\ell^\star+1}(1,\dots,1;\beta_{\ell^*})
$$ 
and by an immediate application of the classical Berry-Esseen Theorem
(see e.g.~\cite{Ess42}) one has that
\begin{equation*}
d_{Kol}\left (\frac{1}{\sqrt{2\ell^\star+1}}V_{2\ell ^{\star }+1}(1,...,1,\beta _{\ell ^{\star }}), Z\right )\leq c\cdot \frac{\mathbb{E}\left [|X_{\beta_{\ell^\star}}|^3\right ]}{\sqrt{2\ell ^{\star
}+1}},
\end{equation*}
where $Z\sim \mathcal N(0,1)$ and $d_{Kol}$ denotes Kolmogorov distance, see e.g.~\cite[\S C.2]{noupebook}. 
The value of $\ell ^{\star }$ for a given random field is fixed, so no
Central Limit Theorem occurs; however for $\ell ^{\star }$ large enough the
resulting composite Rosenblatt distribution can become arbitrary close to a
standard Gaussian variable.
\end{remark}

\begin{theorem}
\label{tre} Assume that $u=0$ and that there exists an even\footnote{The motivation for this assumption is described just after the statement of Theorem \ref{tre}.} multipole $\ell\in \mathcal I^\star$. If $3\beta _{\ell ^{\star }}<\min (1,\beta _{0})$, then
\begin{eqnarray*}
&&\lim_{T\rightarrow \infty }\frac{\Var\left( \mathcal{M}_{T}(u)\right) 
}{T^{2-3\beta _{\ell ^{\star }}}}\\
&&=\frac{2}{3!(1-3\beta_{\ell^\star})(2-3\beta_{\ell^\star})}\sum_{\ell
_{1},\ell _{2},\ell _{3}\in \mathcal{I}^\star} \mathcal{G}_{\ell _{1}\ell _{2}\ell _{3}}^{000}\prod_{i=1}^{3}\sqrt{\frac{2\ell _{i}+1}{4\pi }} C_{\ell_i}(0),
\end{eqnarray*} 
where 
\begin{equation}\label{gaunt3}
\mathcal{G}_{\ell _{1}\ell _{2}\ell _{3}}^{000}:=\int_{\mathbb{S}^2}Y_{\ell _{1}, 0}(x) Y_{\ell_2,0}(x) Y_{\ell _{3}, 0}(x) \,dx
\end{equation}
 is a so-called Gaunt integral 
(cf. \eqref{eq:gaunt}. 
If $\beta_0=1$ and $3\beta_{\ell^\star} =1$ then 
$$
\lim_{T\rightarrow \infty }\frac{\Var\left( \mathcal{M}_{T}(u)\right) 
}{T\log T} = \frac{8\pi\,H_{q-1}(u)^2\phi(u)^2}{q!}\sum_{\ell
_{1},\ell _{2},\ell _{3}\in \mathcal{I}^\star} \mathcal{G}_{\ell _{1}, \ell_2, \ell _{3}}^{000} \prod\limits_{i=1}^{3}\sqrt{\frac{2\ell _{i}+1}{4\pi }} C_{\ell_i}(0)\,.
$$
Moreover we have, as $T\to +\infty$, 
\begin{equation}\label{conv3}
\widetilde{ \mathcal M}_T(u) = -  \frac{\Var(\mathcal M_T(u))^{-1/2}}{3!\sqrt{2\pi}}\int_{\mathbb S^2\times [0,T]} H_3(Z(x,t))\,dxdt + o_{\mathbb P}(1),
\end{equation}
where $H_3(t):=t^3 -3t$, $t\in \mathbb R$ is the third Hermite polynomial (cf. \eqref{Herm}) and $o_{\mathbb P}(1)$ is a family of random variables converging to zero in probability. 
\end{theorem}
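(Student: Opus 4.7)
The plan is to begin from the Hermite expansion of the centered indicator. Since $H_{2k-1}(0)=0$ for every $k\geq 1$, the Hermite coefficients $J_{2k}(0):=\mathbb E[\mathbf 1_{Z\geq 0}H_{2k}(Z)] = H_{2k-1}(0)\phi(0)$ all vanish (Berry's cancellation at zero level), so that
$$
\mathcal M_T(0)=\sum_{q\ge 1,\,q\text{ odd}}\frac{J_q(0)}{q!}\mathcal M_T^{(q)},\qquad \mathcal M_T^{(q)}:=\int_{\mathbb S^2\times[0,T]}H_q(Z(x,t))\,dx\,dt.
$$
Since each $\mathcal M_T^{(q)}$ lies in the $q$-th Wiener chaos generated by $Z$, distinct chaoses are $L^2$-orthogonal, so
$$
\Var(\mathcal M_T(0))=\sum_{q\text{ odd}}\frac{J_q(0)^2}{(q!)^2}\Var(\mathcal M_T^{(q)}),
$$
while the diagonal formula $\mathbb E[H_q(Z(x,t))H_q(Z(y,s))]=q!\,\Gamma(\langle x,y\rangle,t-s)^q$ yields $\Var(\mathcal M_T^{(q)})=q!\int_{[0,T]^2}\!\int_{(\mathbb S^2)^2}\Gamma(\langle x,y\rangle,t-s)^q\,dxdy\,dsdt$.

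I would next identify the leading chaos ($q=3$) by inserting \eqref{eq2} into that integrand. Using the addition formula for spherical harmonics, the angular integration at $q=3$ collapses to a triple multipole sum whose coefficients involve Gaunt integrals $\mathcal G^{000}_{\ell_1\ell_2\ell_3}$, producing an angular factor proportional to $\sum_{\ell_1,\ell_2,\ell_3}\mathcal G^{000}_{\ell_1\ell_2\ell_3}\prod_i\sqrt{(2\ell_i+1)/(4\pi)}\,C_{\ell_i}(0)$ multiplied by the temporal integral $\int_{[0,T]^2}\prod_i\bigl(C_{\ell_i}(t-s)/C_{\ell_i}(0)\bigr)\,dsdt$. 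For $\ell_1,\ell_2,\ell_3\in \mathcal I^\star$ Condition \ref{basic2} gives $\prod_i C_{\ell_i}(\tau)/C_{\ell_i}(0)\sim (1+|\tau|)^{-3\beta_{\ell^\star}}$, and a direct Fubini calculation yields
$$
\int_0^T\!\int_0^T(1+|t-s|)^{-3\beta_{\ell^\star}}\,ds\,dt\sim
\begin{cases}
\dfrac{2\,T^{2-3\beta_{\ell^\star}}}{(1-3\beta_{\ell^\star})(2-3\beta_{\ell^\star})} & \text{if }3\beta_{\ell^\star}<1,\\[4pt]
2\,T\log T & \text{if }3\beta_{\ell^\star}=1.
\end{cases}
$$
Combining the angular and temporal factors and multiplying by $J_3(0)^2/(3!)^2=\phi(0)^2/36$ recovers the stated variance asymptotics.

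I would then show that the remaining Hermite components are negligible. Using $\int_{\mathbb S^2}Y_{\ell,m}\,dx=\sqrt{4\pi}\,\delta_{\ell 0}\delta_{m 0}$, the $q=1$ term reduces to $\sqrt{4\pi}\int_0^T a_{00}(t)\,dt$, whose variance is of order $T^{2-\beta_0}$ if $\beta_0<1$ and of order $T$ if $\beta_0=1$; in either case it is $o$ of the $q=3$ rate under the hypotheses $\beta_0>3\beta_{\ell^\star}$ (resp. $\beta_0=3\beta_{\ell^\star}=1$). For odd $q\geq 5$ the crude bound $\Var(\mathcal M_T^{(q)})=O\bigl(T^{(2-q\beta_{\ell^\star})_+}+T\log T\bigr)$, together with the summability $\sum_\ell(2\ell+1)C_\ell(0)<+\infty$ from \eqref{eqConv}, shows these components are also $o$ of the leading rate. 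After dividing $\mathcal M_T(0)$ by $\sqrt{\Var(\mathcal M_T(0))}$, only the $H_3$ chaos survives in $L^2(\Omega)$, giving $\widetilde{\mathcal M}_T(0)=\bigl(J_3(0)/3!\bigr)\mathcal M_T^{(3)}/\sqrt{\Var(\mathcal M_T(0))}+o_{\mathbb P}(1)$; since $J_3(0)/3!=-\phi(0)/6=-(3!\sqrt{2\pi})^{-1}$, this is exactly \eqref{conv3}.

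The hardest step is the angular bookkeeping at $q=3$. One must verify that the leading constant is strictly positive, and this is precisely where the assumption of an even multipole in $\mathcal I^\star$ enters: it guarantees that some $\mathcal G^{000}_{\ell\ell\ell}\ne 0$ in the sum (via the parity/triangle rules satisfied by Gaunt integrals), after which positivity of the full sum follows from the fact that it governs a variance. Secondary difficulties include absorbing the perturbations $G_\ell(\tau)/C_\ell(0)-1=o(1)$ uniformly in $\ell$ (a dominated-convergence argument based on \eqref{eqConv}), isolating the tail $\ell\notin \mathcal I^\star$ where $\beta_\ell>\beta_{\ell^\star}$ strictly produces lower-order contributions, and, in the boundary case $3\beta_{\ell^\star}=\beta_0=1$, a sharper Karamata-type estimate for the diverging temporal integral to capture the exact $\log T$ factor while safely discarding the $O(T)$ contribution of $\mathcal M_T^{(1)}$.
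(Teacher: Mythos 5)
Your proposal is correct and follows essentially the same route as the paper: Berry's cancellation of the even chaoses at $u=0$, reduction of the third-chaos variance via the addition formula and Gaunt integrals (the paper's \eqref{eq:varqth}), the temporal kernel asymptotics of Lemma \ref{lemma-varqth}/Proposition \ref{prop-varqth}, and a dominated-convergence argument to discard the first and higher odd chaoses before renormalizing, which yields \eqref{conv3} since $J_3(0)/3!=-(3!\sqrt{2\pi})^{-1}$. The only differences are presentational (you compute the $(1+|\tau|)^{-3\beta_{\ell^\star}}$ double integral directly instead of invoking the paper's technical lemmas), not substantive.
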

Recall that for $\ell\in \mathcal I^\star$ we have $C_{\ell}(0)>0$ (see \eqref{def_star}) and note that $\mathcal{G}_{\ell _{1}\ell _{2}\ell _{3}}^{000}$  in \eqref{gaunt3} is nonnegative (see e.g.~\cite[Remark 3.45]{MaPeCUP}); moreover 
if $\ell$ is even, then $\mathcal{G}_{\ell \ell \ell }^{000} >0$ \cite[Proposition 3.43, (3.61)]{MaPeCUP}. Hence under the assumptions of Theorem \ref{tre} the limiting variance constants are strictly positive. 

\begin{remark}\label{rem_conj}
Using the same steps as in the classical papers \cite{DM79, Ta:79}, it seems possible to prove that the right hand side of \eqref{conv3} (and hence $\widetilde{\mathcal M}_T(u)$), under the setting of Theorem \ref{tre}, converges in distribution to a weighted sum of higher order Rosenblatt random variables (more precisely, of order 3). However, because for the probability laws of the latter very little is known, and even less so for their linear combinations, we refrain from rigorously investigating this issue here.
\end{remark}

\begin{remark}
For simplicity of presentation, we are ruling out some boundary cases (such
as $\beta _{0}=2\beta _{\ell ^{\star }}$), which could be dealt with the same
techniques as we shall exploit below: the limit distributions would just
correspond to linear combinations of the asymptotic random variables that we
obtained above.
\end{remark}

\subsection{Short memory behavior}

Theorems \ref{uno}, \ref{due} and \ref{tre} have all considered cases where some form of long-memory behavior is present on the temporal side, meaning that $\beta
_{\ell }<1$ for at least one instance of the multipole $\ell .$ In this section we investigate 
the case where on all scales no form of long-range dependence occurs.

We first need to introduce some more notation: for $q\ge 3$, let $\ell_1, \dots, \ell_q \ge 0$ and 
$m_i \in \lbrace -\ell_i, \dots, \ell_i\rbrace$ for  $i=1, \dots, q$. 
The \emph{generalized Gaunt integral}  \cite[p. 82]{MaPeCUP} of parameters $q, \ell_1, \dots, \ell_q, m_1, \dots, m_q$ is defined as (cf. \eqref{gaunt3})
\begin{equation}\label{eq:gaunt}
\mathcal{G}_{\ell _{1}...\ell _{q}}^{m_1...m_q}:=\int_{\mathbb{S}^2}Y_{\ell _{1}, m _{1}}(x)\cdots Y_{\ell _{q}, m_{q}}(x) \,dx\,,
\end{equation}
where $\lbrace Y_{\ell,m}, \ell\ge 0, m=-\ell,\dots, \ell\rbrace$ still denotes the family of spherical harmonics introduced in \S \ref{secKL}.

\begin{theorem}
\label{quattro} Assume $\beta_0=1$. If either $u\ne0$ and $2\beta _{\ell^\star}>1$ or $u=0$ and $3\beta _{\ell^\star}>1$, we have
\begin{equation*}
\lim_{T\rightarrow \infty }\frac{\Var\left( \mathcal{M}_{T}(u)\right) 
}{T}=\sum_{q=1}^{+\infty} s_q^2\, ,
\end{equation*}
where 
\begin{eqnarray}\label{def_s}
s_1^2 &:=& \phi(u)^2 \int_{\mathbb R} C_0(\tau)\,d\tau,\notag \\
s^2_2 &:=& \frac{u^2\phi(u)^2}{2} \sum_{\ell=0}^{+\infty} (2\ell+1) \int_{\mathbb R} C_\ell(\tau)^2\,d\tau,\\
s^2_q &:=&  \frac{4\pi\,H_{q-1}(u)^2\phi(u)^2}{q!} \sum_{\ell_1, \dots, \ell_q=0}^{+\infty} \mathcal G_{\ell_1 \dots \ell_q}^{0\dots 0} \int_{\mathbb R} \prod_{i=1}^q \sqrt{\frac{2\ell_i +1}{4\pi}} C_{\ell_i}(\tau)\,d\tau, \ q\ge 3. \notag
\end{eqnarray}
Moreover, as $T\to +\infty$, 
\begin{equation*}
\widetilde{\mathcal{M}}_{T}(u)\stackrel{d}{\rightarrow} Z,
\end{equation*}
$Z\sim \mathcal N(0,1)$ being a standard Gaussian random variable.
\end{theorem}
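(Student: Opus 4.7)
\smallskip

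\noindent\textbf{Proof plan for Theorem \ref{quattro}.} The natural starting point is the Hermite expansion of the indicator function: since $\mathbf{1}_{[u,\infty)}(\cdot) - (1-\Phi(u)) = \sum_{q\ge 1}\frac{J_q(u)}{q!}H_q(\cdot)$ in $L^2(\phi(x)\,dx)$ with $J_q(u) = H_{q-1}(u)\phi(u)$, Fubini and the $L^2$-stability of the expansion yield
\begin{equation*}
\mathcal{M}_T(u) \;=\; \sum_{q\ge 1} \frac{J_q(u)}{q!}\,I_q(T),\qquad I_q(T):=\int_{\mathbb{S}^2\times [0,T]} H_q(Z(x,t))\,dx\,dt,
\end{equation*}
where each $I_q(T)$ belongs to the $q$-th Wiener chaos of the underlying Gaussian space, so the terms are pairwise orthogonal in $L^2(\mathbb{P})$. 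The first step is to justify this decomposition and the interchange of sum with expectation/variance.

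\smallskip

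For the variance, orthogonality gives $\Var(\mathcal{M}_T(u))=\sum_{q\ge 1}(J_q(u)/q!)^2\Var(I_q(T))$, and the diagram (or direct) formula for Gaussian Hermite functionals yields $\Var(I_q(T))=q!\int_{[0,T]^2}\!\int_{(\mathbb{S}^2)^2}\Gamma(\langle x,y\rangle,t-s)^q\,dx\,dy\,dt\,ds$. I would then expand $\Gamma^q$ into Legendre series via \eqref{eq2} and reduce the double spherical integral using the addition theorem for $P_\ell$, obtaining after standard computation
\begin{equation*}
\int_{(\mathbb{S}^2)^2}\Gamma(\langle x,y\rangle,\tau)^q\,dx\,dy
\;=\; 4\pi\sum_{\ell_1,\dots,\ell_q\ge 0}\mathcal{G}^{0\dots 0}_{\ell_1\dots\ell_q}\prod_{i=1}^{q}\sqrt{\tfrac{2\ell_i+1}{4\pi}}\,C_{\ell_i}(\tau).
\end{equation*}
Dividing by $T$ and sending $T\to+\infty$ then invokes the elementary fact $\frac{1}{T}\int_0^T\!\int_0^T f(t-s)\,dt\,ds \to \int_{\mathbb{R}}f(\tau)\,d\tau$ whenever $f\in L^1(\mathbb{R})$. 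The required $L^1$-integrability is guaranteed by Condition \ref{basic2}: each $C_\ell\sim c\,g_{\beta_\ell}$, so a product of $q$ factors decays as $g_{\beta_{\ell_1}+\cdots+\beta_{\ell_q}}$ (up to the $G_\ell$-corrections), integrable as soon as $\sum_i\beta_{\ell_i}>1$. The hypotheses $\beta_0=1$ with either $2\beta_{\ell^\star}>1$ (when $u\ne 0$, so $J_2(u)\ne 0$) or $3\beta_{\ell^\star}>1$ (when $u=0$, so $J_2(u)=0$ and the effective minimal Hermite degree producing non-trivial contributions is $3$) ensure termwise integrability; a uniform-in-$T$ tail bound on $\sum_{q>Q}$ using the Cauchy--Schwarz / Gaunt reductions and the summability $\sum_q H_{q-1}(u)^2/q! = \mathrm{e}^{u^2/2}$ then gives the claimed limit $\sum_q s_q^2$.

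\smallskip

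For the central limit theorem I would proceed by truncation: write $\mathcal{M}_T=\mathcal{M}_T^{(Q)}+R_T^{(Q)}$ with $\mathcal{M}_T^{(Q)}=\sum_{q=1}^Q(J_q(u)/q!)I_q(T)$ and exploit that each $I_q(T)/\sqrt{T}$ lives in the fixed $q$-th chaos. A Breuer--Major / Nualart--Peccati fourth moment argument, applied along the temporal (non-compact) direction after reducing the spatial degrees of freedom via the spherical harmonic decomposition, shows $I_q(T)/\sqrt{T}\to\mathcal{N}(0,\Var(I_q(T))/T)$ for each fixed $q$. Because distinct chaoses are jointly asymptotically independent Gaussian by the Peccati--Tudor theorem (once individual asymptotic normality has been established and covariances have been shown to converge), $\mathcal{M}_T^{(Q)}/\sqrt{T}\to\mathcal{N}(0,\sum_{q\le Q}s_q^2)$. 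Combined with $\Var(R_T^{(Q)})/T\to\sum_{q>Q}s_q^2\to 0$ as $Q\to\infty$, a routine Slutsky/triangular approximation argument yields $\widetilde{\mathcal{M}}_T(u)\to\mathcal{N}(0,1)$.

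\smallskip

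The hardest step, in my view, is the uniform-in-$T$ tail estimate for the high-chaos components together with the verification of the fourth moment/contraction condition for each $I_q(T)$: one needs to control iterated convolutions (in $\tau$) of products of the $C_{\ell_i}$ while keeping track of the combinatorial factors $\mathcal{G}^{0\dots 0}_{\ell_1\dots\ell_q}$ that grow polynomially in the multipoles. A clean treatment likely proceeds by bounding the contractions via $\int|C_\ell(\tau)|^r\,d\tau$ with $r\ge 2$ and exploiting Condition \ref{basic2} to reduce to integrals of $g_{\beta}$'s, where the assumption $\beta_0=1$ combined with $q\beta_{\ell^\star}>1$ for the minimal relevant $q$ (namely $q=2$ if $u\ne 0$, $q=3$ if $u=0$) is precisely what is needed.
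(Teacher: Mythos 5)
Your plan follows essentially the same route as the paper: the Hermite/Wiener-chaos expansion with the Gaunt-integral reduction of the spherical integrals, termwise application of $\tfrac1T\int_{[0,T]^2}f(t-s)\,dt\,ds\to\int_{\mathbb R}f(\tau)\,d\tau$ with dominated convergence over multipoles and over $q$, and a chaos-by-chaos fourth-moment CLT combined with a uniform-in-$T$ control of the high-order tail (the paper packages your truncation/Peccati--Tudor/Slutsky step as the criterion of \cite[Theorem 6.3.1]{noupebook}, verifying its four conditions, with the fourth-cumulant bound done by an explicit cumulant/covariance estimate). One small correction: $\sum_{q\ge 1}H_{q-1}(u)^2/q!$ equals $\Phi(u)(1-\Phi(u))/\phi(u)^2$ (from the $L^2$ norm of the indicator expansion), not $\mathrm{e}^{u^2/2}$, but only its finiteness is needed, so this does not affect the argument.
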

Recall that for $\beta_0=1$ we have $\int_{\mathbb R} C_0(\tau)\,d\tau \in (0,+\infty)$ (see Condition \ref{basic2}) so that $s_1^2>0$ yielding $\sum_{q\ge 1}s^2_q >0$
 (the limiting variance constant is strictly positive). Moreover from \eqref{def_star} we have that
$C_\ell(0) >0$ for $\ell\in \mathcal I^\star$, and 
we will see (\eqref{Gaunt+} and \eqref{Gaunt2} that $\mathcal{G}_{\ell _{1}...\ell _{q}}^{0\dots 0}\ge 0$ and $s^2_q\ge 0$. 

\begin{remark}[On Berry's cancellation] \label{rem_berry} It is interesting to note that a phase transition occurs at $u=0$. Indeed, for $2\beta _{\ell ^{\star }}<1$ one observes a form of \emph{Berry's cancellation phenomenon} (see e.g.~\cite{Ber02, Wig}), in
the sense that the variance diverges with a smaller order rate. More precisely, there are two
possibilities:

\begin{itemize}
\item for $3\beta _{\ell ^{\star }}<1$ (resp.~$3\beta _{\ell ^{\star }}=1$), the rate of the variance changes from $%
T^{2-2\beta _{\ell ^{\star }}}$ to $T^{2-3\beta _{\ell ^{\star }}},$ (resp.~$T\log T$) and the
limiting distribution is nonGaussian (Theorem \ref{tre} and Remark \ref{rem_conj});
\item  for $3\beta _{\ell ^{\star }}>1$, the rate of the variance
changes from $T^{2-2\beta _{\ell ^{\star }}}$ to $T,$ and the limiting
distribution is Gaussian (Theorem \ref{quattro}).
\end{itemize}

\end{remark}

\section{Outline of the paper}

The results in Theorems \ref{uno}, \ref{due}, \ref{tre} and \ref{quattro} fully characterize the
behavior of the empirical measure for sphere-cross-time random fields. The resulting scheme is, in the end, rather simple and
can be summarized as follows.
\begin{itemize}
\item \emph{Short Memory Behavior}: this setting corresponds to
integrable covariance functions and occurs either when $\beta_0=1$ and $2\beta_{\ell^\star}>1$, for $u\ne0$, or when $\beta_0=1$ and $3\beta_{\ell^\star}>1$, for $u=0$. In such circumstances, the limiting
distribution is always Gaussian and the variance, as $T\to +\infty$, is asymptotic to $T,$ for
all values of the threshold parameter $u.$ Hence, no form of Berry's cancellation, as in Remark \ref{rem_berry}, can occur.
\item 
\emph{Long Memory Behavior}: this setting corresponds to non-integrable temporal autocovariance and in
this case the picture is more complicated:
\begin{itemize}
\item for $\beta _{0}<\min (2\beta _{\ell^\star},1)$, the variance
grows as $T^{2-\beta _{0}}$ and the limiting distribution is Gaussian, for
all values of $u$;
\item for $2\beta _{\ell ^{\star }}<\min (\beta _{0},1)$, the variance grows as $T^{2-2\beta _{\ell ^{\star }}}$ and the
limiting distribution is nonGaussian (we denote it as composite Rosenblatt), for $u\neq 0;$
however, for $u=0$, a form of Berry's cancellation occurs, the variance is of
order $T^{\max (2-3\beta _{\ell ^{\star }},1)}$, the limiting distribution being nonGaussian for 
$2-3\beta _{\ell ^{\star }}>1$ and Gaussian for $2-3\beta _{\ell ^{\star }}<1$.
\end{itemize}
\end{itemize} 

\subsection{Overview of the proofs}

The rationale behind these results can be more easily understood if we
review the main ideas behind the proof. 

\subsubsection{Chaotic expansions}

The main technical
tool that we are going to exploit is the possibility to expand our area
functional $\mathcal M_T(u)$ in \eqref{rap_int}  into so-called Wiener chaoses, by means of the Stroock-Varadhan
decomposition, see \cite[\S 2.2]{noupebook} as well as our \S \ref{sec-Wiener}. Briefly, the latter is based on the fact that the sequence 
of (normalized) Hermite polynomials $\lbrace H_q/\sqrt{q!}\rbrace_{q\ge 0}$ 
\begin{equation}\label{Herm} 
H_0\equiv 1,\qquad H_{q}(u):=(-1)^{q}\phi (u)^{-1}\frac{d^q}{du^q}\phi(u), \ q\ge 1
\end{equation} 
(where $\phi$ still denotes the probability density function of a standard Gaussian random variable)  is a complete orthonormal basis  of the space of square integrable functions on the real line with 
respect to the Gaussian measure. The first three polynomials are $H_0(u)=1$, $H_1(u)=u$, $H_2(u)=u^2-1$, $H_3(u) = u^3 - 3u$.

From \eqref{rap_int} we have 
 the following \emph{orthogonal} expansion
\begin{equation}\label{exp_s}
\mathcal{M}_{T}(u)=\sum_{q=0}^{\infty }\mathcal{M}_{T}(u)[q],
\end{equation}
the series converging in $L^2(\Omega)$, 
where (see Lemma \ref{lem_chaos})
\begin{equation}\label{def_q}
\mathcal{M}_{T}(u)[q]=\frac{H_{q-1}(u)\phi (u)}{q!}%
\int_{[0,T]}\int_{\mathbb{S}^{2}}H_{q}(Z(x,t))\,dxdt
\end{equation}
is the orthogonal projection of $\mathcal M_T(u)$ onto the so-called $q$-th Wiener chaos. Note that if $u=0$, then 
$\mathcal M_T(u)[q]=0$ whenever $q$ is even. 

 In particular, the zeroth projection is 
\begin{equation}\label{1mean}
\mathcal{M}_{T}(u)[0]=\mathbb{E}\left[ \mathcal{M}_{T}(u)%
\right] =0\,; 
\end{equation}
for the first one we have, recalling \eqref{eqSeries} and \eqref{eqZell}, 
\begin{flalign*}
\mathcal{M}_{T}(u)[1] = \phi(u)\int_{[0,T]}\int_{\mathbb{S}^{2}}Z(x,t)\,dxdt = \phi(u)\lim_{L\rightarrow \infty}\int_{[0,T]}\int_{\mathbb{S}^{2}}\sum_{\ell =0}^{L}Z_{\ell }(x,t)\,dxdt,
\end{flalign*}
where the limit is in the $L^2(\Omega)$-sense. Hence 
\begin{eqnarray}
\mathcal{M}_{T}(u)[1]=\phi (u)\int_{[0,T]}\frac{a_{00}(t)}{\sqrt{4\pi }}\,dt,\label{eq1-discussion}
\end{eqnarray}
the spherical harmonics of degree $\ell\ge 1$ having zero mean 
on the sphere. Furthermore
\begin{eqnarray*}
\mathcal{M}_{T}(u)[2] &=&\frac{u\phi (u)}{2}\int_{[0,T]}\int_{%
\mathbb{S}^{2}}\left ( Z^{2}(x,t)-1\right ) dxdt, \\
\mathcal{M}_{T}(u)[3] &=&\frac{(u^{2}-1)\phi (u)}{2}%
\int_{[0,T]}\int_{\mathbb{S}^{2}}\left ( Z^{3}(x,t)-3Z(x,t)\right ) dxdt.
\end{eqnarray*}

\subsubsection{Sharp asymptotics}

The crucial step behind our arguments is to investigate the \emph{sharp} asymptotic
behavior, as $T\to +\infty$, of the variances for these chaotic projections. In order to simplify this
 discussion we assume here that $\beta_{\ell^\star}\le \beta_0$, see the next sections for a complete analysis. 
 For every $u\in \mathbb R$,
\begin{equation}\label{var123}
\Var(\mathcal{M}_{T}(u)[1]) =  c_1\cdot T^{\max (2-\beta _{0},1)}(1+o(1)). 
\end{equation} 
For $q\ge2$ and either $u\ne 0$ or $u=0$ and $q$ odd (recall that for $u = 0$ the projections onto even order chaoses vanish), we have
\begin{equation}\label{varq}
\Var\left ( \mathcal{M}_{T}(u)[q]\right ) = c_q\cdot T^{\max (2-q\beta _{\ell ^{\star }},1)}(1 + \mathbf{1}_{q\beta_{\ell^\star}=1}\cdot\log T)(1+o(1)).
\end{equation}
Here, for $q\ge1$, $c_q=c_q(u, \beta_{\ell^\star}, \mathcal I^\star)$ is a finite and positive constant depending in particular on $q$, the level $u$ and the coefficient $\beta_{\ell^\star}$. 
Thanks to \eqref{exp_s} and \eqref{1mean}, 
\begin{equation*}
\Var\left ( \mathcal{M}_{T}(u)\right ) =\sum_{q=1}^{\infty }\Var\left (\mathcal{M}_{T}(u)[q]\right )
\end{equation*}%
and hence, up to controlling the sequence $\lbrace c_q, q\ge 1\rbrace$, from \eqref{var123} and \eqref{varq} we have that, as $T\rightarrow \infty$,
\begin{eqnarray*}
\widetilde{\mathcal{M}}_{T}(u) &=&\frac{\mathcal{M}_{T}(u)[1]}{%
\sqrt{\Var\left ( \mathcal{M}_{T}(u)[1]\right ) }}+o_{\mathbb{P}}(1),\quad \text{for }\beta _{0}<\min (2\beta _{\ell ^{\star }},1)\text{ , }u\neq 0, \\
\widetilde{\mathcal{M}}_{T}(u) &=&\frac{\mathcal{M}_{T}(u)[1]}{%
\sqrt{\Var\left ( \mathcal{M}_{T}(u)[1]\right ) }}+o_{\mathbb{P}}(1),\quad \text{for }\beta _{0}<\min (3\beta _{\ell ^{\star }},1)\text{ , }u= 0, \\
\widetilde{\mathcal{M}}_{T}(u) &=&\frac{\mathcal{M}_{T}(u)[2]}{%
\sqrt{\Var\left ( \mathcal{M}_{T}(u)[2]\right ) }}+o_{\mathbb{P}}(1),\quad \text{for }2\beta _{\ell ^{\star }}<\min (\beta _{0},1)\text{ , }u\neq 0, \\
\widetilde{\mathcal{M}}_{T}(u) &=&\frac{\mathcal{M}_{T}(u)[3]}{%
\sqrt{\Var\left ( \mathcal{M}_{T}(u)[3]\right ) }}+o_{\mathbb{P}}(1),\quad \text{for }3\beta _{\ell ^{\star }}<\min (\beta _{0},1)\text{ , }u=0,
\end{eqnarray*}
where $o_{\mathbb P}(1)$ denotes a sequence of random variables converging to zero in probability. 
The asymptotic distribution of \eqref{eq:stdM} can then be derived in the cases considered just above by a careful
analysis of these single components: the first chaotic term is Gaussian for every $T>0$, the second one asymptotically follows a composite Rosenblatt distribution. 
 On the other hand, in the remaining
cases, (e.g.~$u\neq 0$, $\beta_0=1$ and $2\beta _{\ell ^{\star }}>1$ or $u=0$, $\beta_0=1$ and $3\beta
_{\ell ^{\star }}>1)$ it is not possible to identify a single dominating
component; indeed, all the chaotic projections contribute with a variance of
the same rate $T,$ and the Gaussian limiting behaviour will follow from a
Breuer-Major type argument \cite[\S 5.3, \S 7]{noupebook}. 

\subsection{Discussion}

We can further summarize our results as follows:
\begin{center}
    \begin{tabular}{ | p{3cm} | p{3cm} | p{3cm} | p{3cm} |}
    \hline
                & $u\ne0$ & $u=0$ & asymptotic $\qquad$ distribution \\ \hline
   first chaos $\qquad$ dominates if & ${\beta_0<\min(2\beta_{\ell^\star},1)}$ $(\Var\approx T^{2-\beta_0})$ & ${\beta_0<\min(3\beta_{\ell^\star},1)}$ $(\Var\approx T^{2-\beta_0})$ & Gaussian \\ \hline
    second chaos  $\qquad$  dominates if & ${2\beta_{\ell^\star}<\min(\beta_0,1)}$ $(\Var\approx T^{2-2\beta_{\ell^\star}})$ & never  {(it vanishes)} & non-Gaussian (composite $\qquad$ Rosenblatt 2)\\ \hline
    third chaos  $\qquad$  dominates if & never & ${3\beta_{\ell^\star}<\min(\beta_0,1)}$ $(\Var\approx T^{2-3\beta_{\ell^\star}})$ & non-Gaussian \\ \hline
    all chaoses  $\qquad$  dominate if & $\beta_0=1$, $2\beta_{\ell^\star}>1$ $(\Var\approx T)$ & $\beta_0=1$, $3\beta_{\ell^\star}>1$ $(\Var\approx  T)$ & Gaussian \\
    \hline
    \end{tabular}
\end{center}
These findings should be compared with a rapidly growing literature devoted
to the investigation of geometric functionals over spherical random fields
in a different regime; in particular, a number of papers (see e.g.~\cite{CM2018,  MR2015,  MRW,  MW2014, Wig, Todino1})
have considered the high-frequency behaviour (e.g., when the eigenvalues
diverge) for spherical random eigenfunctions with no form of temporal
dependence. The results we exhibited here have some analogies, but also
important differences, with this stream of literature. In particular
\begin{itemize}
\item  for the excursion area of random spherical harmonics at $u\neq 0$ \cite{MW2014, MR2015} it is
indeed the case that the high-energy behaviour is dominated by the
second-order chaotic projection, whose asymptotic distribution is, however,
Gaussian. The same asymptotic behaviour occurs for other geometric
functionals, such as the boundary length of excursion sets and their
Euler-Poincar\'{e} characteristic, see \cite{Rossi2018, CM2018};
\item  for $u=0,$ the limiting variance is always of smaller-order, and
asymptotic Gaussianity holds \cite{MW2014, MRW}.
\end{itemize}
These differences can be explained as follows. Because in the case of
high-frequency asymptotics one deals with sequences of eigenspaces of growing dimensions,
the second chaotic components correspond to a sum of a growing number of 
i.i.d. coefficients, whence a standard Central Limit Theorem holds. In our case
here, the dimension of the sum of the eigenspaces which correspond to the \emph{strongest
memory} does not diverge in general, and hence asymptotic Gaussianity need
not to hold. Moreover, in the case of high-frequency asymptotics the
\emph{linear} projection term $a_{00}$ is dropped by
construction: on the contrary, for the random fields we investigate here
this term can be dominant for instance when $\beta _{0}<\min (2\beta _{\ell ^{\star }},1),$
in which case Gaussianity follows trivially. 

As far as Berry's cancellation
is concerned, this can occur in the present circumstances only when $%
H_{2}(Z(\cdot,\cdot ))$ exhibits long memory behaviour, i.e., non-integrable temporal
autocovariance: this is indeed the case for $2\beta _{\ell ^{\star }}<1.$ If
these condition is not met, all chaotic components have integrable temporal
autocovariance, none of them dominates and a Central Limit Theorem is
established by means of a Breuer-Major Theorem. Note that the presence of
long memory behaviour in the field $Z$ is a necessary, but not sufficient condition for the covariance of $H_{2}(Z(\cdot,\cdot ))$ to be
non-integrable.

As a final analogy, a remarkable feature of high-frequency asymptotics for
random eigenfunctions is the fact that geometric functionals turn out to be
asymptotically fully correlated over different levels, and even among
themselves, see \cite{CM19} and the references therein. It is then of interest to investigate if
similar features appear in the present framework. We present here a small
result that highlights this point.

\begin{proposition}\label{mono}
Assume that $u\ne 0$, $2\beta _{\ell^\star}<\min(\beta_0,1)$ and that there exists a unique 
\begin{equation*}
\ell ^{\star }:=\arg\min_{\ell \in \widetilde{\mathbb{N}},\ell\ge 1}\beta _{\ell }\,,
\end{equation*}%
then, as $T\rightarrow \infty $, 
\begin{equation*}
\Corr\left( \mathcal{M}_{T}(u),m_{T;\ell^{\star }}(u)\right)
\rightarrow 1\,,
\end{equation*}%
where 
\begin{equation*}
m_{T;\ell^{\star }}(u):=\frac{u}{2\sigma _{\ell^\star}}\phi
\left( \frac{u}{\sigma _{\ell ^{\star }}}\right) \int_{\mathbb{S}%
^{2}}\int_{0}^{T}H_{2}\left( \frac{Z_{\ell ^{\star }}(x,t)}{\sigma _{\ell
^{\star }}}\right) dx\,dt\,.
\end{equation*}
\end{proposition}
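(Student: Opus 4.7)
The main idea is to observe that $m_{T;\ell^\star}(u)$ is nothing but a positive scalar multiple of the ``$\ell^\star$-slice'' of the second chaotic projection of $\mathcal{M}_T(u)$, and that, under the present hypotheses, this slice already captures the leading-order $L^2(\Omega)$-content of $\mathcal{M}_T(u)$. From this the correlation tending to one will follow almost directly.

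First I would compute the explicit proportionality. Using $H_2(y)=y^2-1$, the identity $\int_{\mathbb S^2}Z_{\ell^\star}^2(x,t)\,dx=\sum_m a_{\ell^\star,m}(t)^2$, and the relation $4\pi\sigma_{\ell^\star}^2=(2\ell^\star+1)C_{\ell^\star}(0)$, one obtains
\begin{equation*}
m_{T;\ell^\star}(u)=\kappa\cdot\mathcal M_T^{(\ell^\star)}(u)[2],\qquad \kappa:=\frac{\phi(u/\sigma_{\ell^\star})}{\sigma_{\ell^\star}^{3}\,\phi(u)}>0,
\end{equation*}
where $\mathcal M_T^{(\ell)}(u)[2]:=\frac{u\phi(u)}{2}\int_0^T\int_{\mathbb S^2}(Z_\ell^{2}(x,t)-\sigma_\ell^{2})\,dx\,dt$ is the contribution of the $\ell$-th multipole to the second chaos projection. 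Because the spherical harmonics of different degrees are $L^2(\mathbb S^2)$-orthogonal and the processes $\{a_{\ell,m}\}_\ell$ are independent across $\ell$, we have the orthogonal decomposition $\mathcal M_T(u)[2]=\sum_{\ell\in\widetilde{\mathbb N}}\mathcal M_T^{(\ell)}(u)[2]$, and by the orthogonality of Wiener chaoses $\mathcal M_T(u)[q]\perp\mathcal M_T^{(\ell^\star)}(u)[2]$ for every $q\ne 2$. Combining these two remarks,
\begin{equation*}
\Cov\!\bigl(\mathcal M_T(u),m_{T;\ell^\star}(u)\bigr)=\kappa\,\Var\!\bigl(\mathcal M_T^{(\ell^\star)}(u)[2]\bigr),
\end{equation*}
and consequently
\begin{equation*}
\Corr\!\bigl(\mathcal M_T(u),m_{T;\ell^\star}(u)\bigr)=\sqrt{\frac{\Var(\mathcal M_T^{(\ell^\star)}(u)[2])}{\Var(\mathcal M_T(u))}}.
\end{equation*}
The proposition is therefore equivalent to showing that $\Var(\mathcal M_T^{(\ell^\star)}(u)[2])/\Var(\mathcal M_T(u))\to 1$.

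The last convergence is established through the sharp variance bounds \eqref{var123}--\eqref{varq} that are the backbone of the proofs of Theorems \ref{uno}--\ref{quattro}. Two things must be verified: (i) among the chaoses $q\ge 1$, only $q=2$ contributes to leading order, and (ii) within the second chaos, only the term $\ell=\ell^\star$ contributes to leading order. Assertion (i) uses that $\Var(\mathcal M_T(u)[1])$ is of order $T^{\max(2-\beta_0,1)}$ while for $q\ge 2$ one has $\Var(\mathcal M_T(u)[q])=O(T^{\max(2-q\beta_{\ell^\star},1)}(\log T)^{\mathbf 1\{q\beta_{\ell^\star}=1\}})$; the hypothesis $2\beta_{\ell^\star}<\min(\beta_0,1)$ makes $T^{2-2\beta_{\ell^\star}}$ strictly dominant over all the other orders (recall $\beta_{\ell^\star}<1/2$ so $2-2\beta_{\ell^\star}>1$). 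Assertion (ii) uses the explicit Gaussian formula
\begin{equation*}
\Var\!\bigl(\mathcal M_T^{(\ell)}(u)[2]\bigr)=\frac{u^{2}\phi(u)^{2}}{2}(2\ell+1)\int_{0}^{T}\!\!\int_{0}^{T}C_{\ell}(t-s)^{2}\,ds\,dt,
\end{equation*}
together with $C_\ell(\tau)\sim C_\ell(0)(1+|\tau|)^{-\beta_\ell}$: for $\ell=\ell^\star$ this integral grows like $T^{2-2\beta_{\ell^\star}}$, whereas the uniqueness of $\ell^\star$ and Condition \ref{basic2} force $\beta_\ell>\beta_{\ell^\star}$ strictly for all $\ell\ne \ell^\star$, yielding $\int\!\int C_\ell^2=o(T^{2-2\beta_{\ell^\star}})$ pointwise in $\ell$.

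The real technical point, and the step I expect to be the main obstacle, is the uniformity required to conclude $\sum_{\ell\ne\ell^\star}\Var(\mathcal M_T^{(\ell)}(u)[2])=o(T^{2-2\beta_{\ell^\star}})$ and $\sum_{q\ge 3}\Var(\mathcal M_T(u)[q])=o(T^{2-2\beta_{\ell^\star}})$; the pointwise bounds just mentioned are not by themselves sufficient, since one sums over infinitely many multipoles and infinitely many chaoses. These uniform controls, however, are precisely those established inside the proofs of Theorems \ref{due} and \ref{quattro} by combining the summability \eqref{eqConv} of $\{(2\ell+1)C_\ell(0)\}$ with Condition \ref{basic2} and the diagram/hypercontractivity estimates for $\Var(\mathcal M_T(u)[q])$. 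Once these are invoked, both the numerator and the denominator in the displayed ratio above are shown to be asymptotic to the same constant multiple of $T^{2-2\beta_{\ell^\star}}$, the ratio tends to $1$, and the proposition follows.
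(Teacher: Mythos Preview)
Your argument is correct and rests on the same variance asymptotics as the paper (Theorem \ref{due}, Proposition \ref{prop-var2nd}, Lemma \ref{lemma-var2nd}), but you organize it more structurally. The paper computes the three quantities $\Var(\mathcal M_T(u))$, $\Var(m_{T;\ell^\star}(u))$ and $\Cov(\mathcal M_T(u),m_{T;\ell^\star}(u))$ separately as explicit integrals, obtains the same limiting constant (times $T^{2-2\beta_{\ell^\star}}$) for each, and checks that the resulting ratio equals $1$. You instead notice that $m_{T;\ell^\star}(u)$ is a positive scalar multiple of the $\ell^\star$-block $\mathcal M_T^{(\ell^\star)}(u)[2]$ inside the orthogonal decomposition $\mathcal M_T(u)[2]=\sum_\ell \mathcal M_T^{(\ell)}(u)[2]$, which immediately yields $\Corr=\sqrt{\Var(\mathcal M_T^{(\ell^\star)}(u)[2])/\Var(\mathcal M_T(u))}$ and reduces the whole proposition to a single variance ratio already established in the proof of Theorem \ref{due}. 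This shortcut avoids recomputing the covariance integral and makes transparent \emph{why} the correlation must tend to $1$ (the $\ell^\star$-block carries all the leading variance), at the cost of spelling out the block decomposition of $\mathcal M_T(u)[2]$; the paper's approach is more self-contained computationally but hides this structure. The uniformity concerns you flag are exactly the ones handled by the dominated-convergence arguments in Proposition \ref{prop-var2nd} and in \eqref{eq1-proofthm1}, so invoking Theorem \ref{due} is legitimate.
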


\begin{remark}
Note that, if we introduce the process 
\begin{equation*}\label{MTell}
\mathcal{M}_{T;\ell }(u)=\int_{0}^{T} \left ( \mathcal{A}_{u;\ell }(t) - \mathbb E[\mathcal{A}_{u;\ell }(t)] \right )\,dt\text{ ,}
\end{equation*}%
where%
\begin{equation*}
\mathcal{A}_{u;\ell }(t):=\int_{\mathbb{S}^{2}}\mathbf{1}_{Z_{\ell
}(x,t)\geq u}\,dx\,,
\end{equation*}%
(cf.~\eqref{eqZell} and \eqref{Au}) then $m_{T;\ell^{\star }}(u)=\mathcal{M}_{T;\ell^{\star }}(u)[2]$, the
second order chaotic component of the functional of the monochromatic field $Z_{\ell^\star}$.
\end{remark}

\section{Stroock-Varadhan
decompositions}\label{sec-Wiener}

The first tool that is needed in order to establish our asymptotic results is the derivation 
of the analytic form for the chaotic expansion \eqref{exp_s} of the empirical measure. The
result is very close to analogous findings given by \cite{DehTaq, MW2014}. For a complete discussion on Wiener chaos 
and related topics see e.g.
\cite[\S 2.2]{noupebook}.

\begin{lemma}\label{lem_chaos}
For every $T>0$ we have that 
\begin{equation}  \label{CE_M}
\mathcal{M}_T(u)= \sum_{q\ge 1} \frac{J_q(u)}{q!}\int_{[0,T]}\int_{\mathbb{S}^2}
H_q\left(Z(x,t)\right)\,dxdt
\end{equation}
where $J_q(u)=H_{q-1}(u)\phi(u)$, $\phi$ still being the density function of a
standard Gaussian random variable and $H_q$ the Hermite polynomial \eqref{Herm} of
order $q$. The convergence of the series in \eqref{CE_M} is in the $L^2(\Omega)$-sense. 
\end{lemma}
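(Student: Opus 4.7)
The plan is to start with a pointwise Hermite expansion of the indicator function $\mathbf{1}_{[u,+\infty)}$, identify the coefficients explicitly, and then justify term-by-term integration over $\mathbb{S}^2\times[0,T]$ via the orthogonality of Wiener chaoses.

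\textbf{Step 1: Hermite expansion of the indicator.} Fix $(x,t)\in\mathbb S^2\times\mathbb R$. By \eqref{somma1}, $Z(x,t)\sim \mathcal N(0,1)$, so $\mathbf{1}_{[u,+\infty)}\in L^2(\mathbb R,\gamma)$ with $\gamma$ the standard Gaussian measure. Since $\{H_q/\sqrt{q!}\}_{q\ge 0}$ is a complete orthonormal basis of $L^2(\mathbb R,\gamma)$, I can write
\begin{equation*}
\mathbf 1_{Z(x,t)\ge u} \;=\; \sum_{q=0}^{\infty} \frac{\alpha_q(u)}{q!}\,H_q(Z(x,t)),\qquad \alpha_q(u):=\int_u^{+\infty} H_q(s)\phi(s)\,ds,
\end{equation*}
with convergence in $L^2(\Omega)$.

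\textbf{Step 2: Computing the coefficients.} Using Rodrigues' formula $H_q(s)\phi(s)=(-1)^q\phi^{(q)}(s)$ implicit in \eqref{Herm}, I obtain, for $q\ge 1$, the identity $H_q(s)\phi(s)=-\tfrac{d}{ds}\bigl[H_{q-1}(s)\phi(s)\bigr]$. Integrating from $u$ to $+\infty$ gives
\begin{equation*}
\alpha_q(u)=H_{q-1}(u)\phi(u)=J_q(u),\qquad q\ge 1,
\end{equation*}
while $\alpha_0(u)=1-\Phi(u)$ matches $\mathbb E[\mathcal A_u(t)]/(4\pi)$. Subtracting the mean from $\mathcal A_u(t)$ eliminates the $q=0$ term.

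\textbf{Step 3: Interchanging sum and integration.} For every $N\ge 1$ set
\begin{equation*}
S_N(T):=\sum_{q=1}^{N} \frac{J_q(u)}{q!}\int_{[0,T]}\int_{\mathbb S^2} H_q(Z(x,t))\,dx\,dt.
\end{equation*}
By linearity and Fubini, $S_N(T)=\int_{[0,T]}\!\!\int_{\mathbb S^2}\bigl(\sum_{q=1}^N\tfrac{J_q(u)}{q!}H_q(Z(x,t))\bigr)dx\,dt$. The distinct terms $H_q(Z(x,t))$ and $H_{q'}(Z(y,s))$ with $q\neq q'$ belong to orthogonal Wiener chaoses (see \cite[\S 2.2]{noupebook}), so the cross-variances vanish. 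Hence
\begin{equation*}
\Var(S_N(T)) \;=\; \sum_{q=1}^{N} \frac{J_q(u)^2}{q!}\int_{[0,T]^2}\!\!\int_{\mathbb S^2\times\mathbb S^2} \Gamma(\langle x,y\rangle,t-s)^q\,dx\,dy\,dt\,ds,
\end{equation*}
using the diagram/isometry identity $\mathbb E[H_q(Z(x,t))H_q(Z(y,s))]=q!\,\Gamma(\langle x,y\rangle,t-s)^q$.

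\textbf{Step 4: Finiteness of the limiting variance.} Since $|\Gamma|\le 1$, Mehler's formula (or the basic bivariate Gaussian identity) yields, for every $(x,y,t,s)$,
\begin{equation*}
\sum_{q=1}^{\infty}\frac{J_q(u)^2}{q!}\Gamma(\langle x,y\rangle,t-s)^q \;=\; \mathbb P(Z(x,t)\ge u,\,Z(y,s)\ge u)-(1-\Phi(u))^2,
\end{equation*}
which is bounded uniformly by $\Phi(-u)<\infty$. Integrating over the compact domain $[0,T]^2\times\mathbb S^2\times\mathbb S^2$ (and exchanging sum/integral by Tonelli, everything being nonnegative) shows that $\sum_{q\ge 1}\Var(\text{$q$-th term})<\infty$, so $\{S_N(T)\}_N$ is Cauchy in $L^2(\Omega)$. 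Passing to the limit in Step 3, combined with Step 1 and the removal of the $q=0$ term, identifies the $L^2$-limit as $\mathcal M_T(u)$, establishing \eqref{CE_M}.

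\textbf{Main obstacle.} The only nontrivial point is the interchange of the $L^2(\Omega)$-convergent pointwise expansion with the double integral over $\mathbb S^2\times[0,T]$; this is precisely what Wiener-chaos orthogonality plus Mehler's identity resolves, and is analogous to \cite{DehTaq, MW2014}.
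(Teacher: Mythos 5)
Your Steps 1--2 coincide with the paper's: expand $\mathbf 1_{Z\ge u}$ in Hermite polynomials and identify $J_q(u)=H_{q-1}(u)\phi(u)$ via $H_q(s)\phi(s)=-\frac{d}{ds}[H_{q-1}(s)\phi(s)]$. Where you genuinely diverge is the interchange step: the paper does not pass through chaos orthogonality or Mehler's formula at all, but bounds $\mathbb E\bigl[(\mathcal M_T(u)-S_Q(T))^2\bigr]$ directly by Jensen's inequality and Fubini--Tonelli, and then uses the fact that $Z(x,t)$ is standard Gaussian at every point, so the inner expectation $\mathbb E\bigl[(\mathbf 1_{Z(x,t)\ge u}-\sum_{q\le Q}\frac{J_q(u)}{q!}H_q(Z(x,t)))^2\bigr]$ is independent of $(x,t)$ and tends to $0$ as $Q\to\infty$; this gives the $L^2$ convergence of the partial sums \emph{to} $\mathcal M_T(u)$ in one stroke. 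Your route (orthogonality of chaoses plus Mehler to sum the variances) is also viable and in fact yields the variance formula \eqref{var_M} as a by-product, but as written it has a loose end: Cauchyness of $S_N(T)$ only produces \emph{some} $L^2$ limit, and the sentence ``passing to the limit in Step 3 \dots identifies the limit as $\mathcal M_T(u)$'' skips the needed cross-term computation. You should add that, by Step 1, orthogonality and Fubini, $\mathbb E[\mathcal M_T(u)\,S_N(T)]=\sum_{q=1}^N\frac{J_q(u)^2}{q!}\int_{[0,T]^2}\int_{\mathbb S^2\times\mathbb S^2}\Gamma(\langle x,y\rangle,t-s)^q\,dx\,dy\,dt\,ds=\Var(S_N(T))$, while Mehler gives $\Var(\mathcal M_T(u))=\sum_{q\ge1}\frac{J_q(u)^2}{q!}\int\int\Gamma^q$, whence $\mathbb E[(\mathcal M_T(u)-S_N(T))^2]=\Var(\mathcal M_T(u))-\Var(S_N(T))\to0$. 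A second small slip: the terms $\Gamma^q$ are not nonnegative for odd $q$ (the covariance may be negative), so Tonelli does not apply as stated; replace it by Fubini/dominated convergence using $|\Gamma|\le1$ and $\sum_q J_q(u)^2/q!=\Var(\mathbf 1_{Z\ge u})<\infty$, which dominates uniformly on the compact domain. With these two points made explicit, your argument is complete; the paper's version is shorter and more elementary, yours makes the variance decomposition explicit at the same time.
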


\begin{proof}
 Let $Z\sim \mathcal N(0,1)$, then 
\begin{equation*}
\mathbf{1}_{Z\ge u}=\sum_{q=0}^{\infty }\frac{J_{q}(u)}{q!}%
H_{q}(Z)\,,
\end{equation*}%
where the right hand side converges in the $L^{2}(\Omega)$-sense and
the coefficients $J_{q}(u)$ are given by 
\begin{flalign*}
&J_{q}(u) := \mathbb E[\mathbf{1}_{Z\ge u} H_q(Z)]
=\int_{\mathbb{R}}\mathbf{1}_{x\ge u}\,H_{q}(x)\phi (x)\,dx=(-1)^q \int_{u}^{+\infty}\frac{d^q}{dx^q} \phi(x)\,dx=H_{q-1}(u)\phi(u)
\end{flalign*}
(note that for fixed $x\in \mathbb S^2,t\in \mathbb R$, $Z(x,t)$ is standard Gaussian). Now consider the sequence of random variables 
$$
\left \lbrace \sum_{q=0}^{Q} \frac{J_{q}(u)}{q!} 
\int_{[0,T]}\int_{\mathbb S^2} H_{q}(Z(x,t))\,dxdt, \ Q\ge 1 \right \rbrace;
$$
let us prove that 
\begin{equation}\label{approxQ}
\sum_{q=0}^{Q} \frac{J_{q}(u)}{q!} 
\int_{[0,T]}\int_{\mathbb S^2} H_{q}(Z(x,t))\,dxdt \stackrel{Q\to +\infty}{\longrightarrow} \mathcal M_T(u)
\end{equation} 
in the $L^2(\Omega)$-sense. We have, thanks to Jensen inequality and Fubini-Tonelli Theorem,
\begin{eqnarray*}
&&\mathbb E\left [ \left ( M_T(u) -  \sum_{q=0}^{Q} \frac{J_{q}(u)}{q!} 
\int_{[0,T]}\int_{\mathbb S^2} H_{q}(Z(x,t))\,dxdt   \right )^2  \right ] \cr
&&= \mathbb E \left [ \left ( \int_{[0,T]}\int_{\mathbb S^2} \left ( \mathbf{1}_{Z(x,t)\ge u} -   \sum_{q=0}^{Q} \frac{J_{q}(u)}{q!}  H_{q}(Z(x,t))\right)dxdt     \right )^2       \right ] \cr
&&\le 4\pi T  \int_{[0,T]} \int_{\mathbb S^2} \mathbb E\left [ \left (  \mathbf{1}_{Z(x,t)\ge u} -   \sum_{q=0}^{Q} \frac{J_{q}(u)}{q!}  H_{q}(Z(x,t))     \right )^2\right ]\,dxdt  \cr
&&= (4\pi)^2 T^2  \mathbb E\left [ \left (  \mathbf{1}_{Z\ge u} -   \sum_{q=0}^{Q} \frac{J_{q}(u)}{q!}  H_{q}(Z)     \right )^2\right ]\rightarrow_{Q\to +\infty} 0,
\end{eqnarray*} 
hence \eqref{approxQ} holds and the proof is concluded. 
\end{proof}

Thanks to orthogonality of the chaotic components, from Lemma \ref{lem_chaos} we get 
\begin{eqnarray}  \label{var_M}
&&\Var\left(\mathcal{M}_T(u)\right)=\sum_{q=1}^\infty\Var\left(%
\mathcal{M}_T(u)[q]\right) \cr
&&=\sum_{q=1}^\infty \, \frac{J_q(u)^2}{q!}\, \int_{[0,T]^2} 
\int_{\mathbb{S}^2\times\mathbb{S}^2} \Gamma( \langle x,y
\rangle,t-s)^q\, dx dy dt ds,
\end{eqnarray}
where $\Gamma$ is the covariance function in \eqref{Gamma}.

\subsection{First order chaotic projections}

In this subsection we investigate the variance behavior of the first chaotic component  \eqref{eq1-discussion}. 
\begin{lemma}\label{lem1}
We have, as $T\to +\infty$,
$$
\lim_{T\to\infty} \frac{\Var(\mathcal M_T(u)[1])}{T^{2-\beta_0}} = \frac{2\phi(u)^2 C_0(0)}{(1-\beta_0 )(2-\beta_0 )},\quad \text{if } \beta_0\in (0,1)$$
and
$$
\lim_{T\to\infty}\frac{\Var(\mathcal M_T(u)[1])}{T} = \phi(u)^2 \int_{\mathbb R} C_0(\tau)\,d\tau,\quad \text{if }\beta_0=1.
$$
\end{lemma}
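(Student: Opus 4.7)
The plan is to reduce the claim to a classical one-dimensional variance asymptotics computation for a stationary Gaussian process. Indeed, equation \eqref{eq1-discussion} already expresses $\mathcal M_T(u)[1]$ as a deterministic multiple of $\int_0^T a_{00}(t)\,dt$, where $a_{00}$ is a centered stationary Gaussian process with autocovariance $C_0$. All the structural work (chaos expansion, vanishing of higher spherical harmonics under spatial integration) has already been carried out in the preceding subsection, so what remains is purely the scaling of $\int_0^T\!\int_0^T C_0(t-s)\,ds\,dt$.

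First I would pass from the double integral to a single integral via the change of variables $\tau = t-s$, using that $C_0$ is even, to obtain
\[
\int_0^T\!\int_0^T C_0(t-s)\,ds\,dt \;=\; 2T\int_0^T C_0(\tau)\,d\tau \;-\; 2\int_0^T \tau\, C_0(\tau)\,d\tau,
\]
and then split according to the value of $\beta_0$ given by Condition \ref{basic2}. In the short-memory case $\beta_0 = 1$, the envelope $|C_0(\tau)| \le c(1+|\tau|)^{-\alpha}$ with $\alpha \ge 2$ yields $C_0\in L^1(\mathbb R)$; dividing by $T$ and applying dominated convergence with this envelope to the representation $\int_{-T}^{T}(1-|\tau|/T)C_0(\tau)\,d\tau$ yields the stated limit $\phi(u)^2\int_{\mathbb R}C_0(\tau)\,d\tau$. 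In the long-memory case $\beta_0\in(0,1)$, Condition \ref{basic2} gives $\tau^{\beta_0}C_0(\tau)\to C_0(0)$ as $\tau\to+\infty$; elementary Beta-function-type asymptotics then give $\int_0^T C_0(\tau)\,d\tau \sim C_0(0) T^{1-\beta_0}/(1-\beta_0)$ and $\int_0^T \tau\,C_0(\tau)\,d\tau \sim C_0(0) T^{2-\beta_0}/(2-\beta_0)$, and combining these through the identity above collapses to the prefactor $\tfrac{2 C_0(0)}{(1-\beta_0)(2-\beta_0)}$.

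The only genuine technical point, and the step I expect to require the most care, is the long-memory case: Condition \ref{basic2} controls only the tail of $C_0$ and says nothing quantitative about its behaviour near the origin. The natural remedy is to fix $K$ large and split each of the two integrals as $\int_0^K + \int_K^T$. Mean-square continuity of $Z$ implies continuity (and hence boundedness on $[0,K]$) of $C_0$, so the near-origin contributions are at most $O(T)$ and $O(K^2)$ respectively, both $o(T^{2-\beta_0})$; on $[K,T]$ the substitution $\tau = Ts$ and uniform convergence $G_0(\tau)/C_0(0)\to 1$ reduce the integrals to the Beta-function computations above. This uniform-convergence argument is the only place where Condition \ref{basic2} enters in an essential (rather than cosmetic) way.
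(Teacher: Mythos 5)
Your proposal is correct and follows essentially the same route as the paper: reduce to the one-dimensional integral $2T\int_0^T(1-\tau/T)C_0(\tau)\,d\tau$ via the change of variables $\tau=t-s$, use dominated convergence with the integrable envelope when $\beta_0=1$, and in the long-memory case split off a bounded initial segment $[0,M]$ (whose contribution is $O(T)=o(T^{2-\beta_0})$) and extract the leading Beta-type asymptotics on $[M,T]$ from the tail condition $G_0(\tau)/C_0(0)\to 1$ via an $\varepsilon$-argument. The only difference, writing the variance as $2T\int_0^T C_0 - 2\int_0^T\tau\,C_0$ instead of keeping the factor $(1-\tau/T)$, is cosmetic.
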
 
Recall that Condition \ref{basic2} ensures that $C_0(0)>0$ and that for $\beta_0=1$ 
$$
\int_{\mathbb R} C_0(\tau)\,d\tau \in (0,+\infty)\,,
$$
hence Lemma \ref{lem1} gives the exact rate for the variance, the limiting constants being strictly positive.
From \eqref{eq1-discussion} we can write 
\begin{equation}\label{var1}
\Var(\mathcal M_T(u)[1]) = \phi(u)^2 \int_{[0,T]^2} C_0(t-s)\,dtds.
\end{equation}
\begin{remark}\label{remComput}
We will often make use of the following standard computation, that we report in this remark and that are taken for granted in the rest of the article. Making the change of variable $\tau=t-s$ 
for the double integral on the right hand side of \eqref{var1}, one has 
\begin{eqnarray*}
&&\int_{[0,T]^2}C_0(t-s)\,dt\,ds=\int_0^T ds \int_{-s}^{T-s} C_0(\tau)\,d\tau\\
&&=\int_0^T ds \int_{-T}^{T}\mathbf{1}_{[{-s},{T-s}]}(\tau) \,\,C_0(\tau)\,d\tau=\int_{-T}^{T} \,C_0(\tau)\,d\tau  \int_0^T \mathbf{1}_{[{-\tau},{T-\tau}]}(s) ds \\
&&=\int_{-T}^{0} \,C_0(\tau)\,d\tau  \int_0^T \mathbf{1}_{[{-\tau},{T}]}(s) ds +\int_{0}^{T} \,C_0(\tau)\,d\tau  \int_0^T \mathbf{1}_{[{0},{T-\tau}]}(s) ds \\
&&=\int_{-T}^{0} (T+\tau)\,C_0(\tau)\,d\tau+\int_{0}^{T}(T-\tau) \,C_0(\tau)\,d\tau  \\
&&=2T\,\int_{0}^{T} \left (1- \frac{\tau}{T} \right ) \,C_0(\tau)\,d\tau = T\int_{-T}^T  \left (1- \frac{|\tau|}{T} \right ) \,C_0(\tau)\,d\tau\,. 
\end{eqnarray*}
\end{remark}
It is now easy to investigate the asymptotic behavior, as $T\to +\infty$, of the variance of the first order chaotic component. 

\begin{proof}[Proof of Lemma \ref{lem1}]
From Remark \ref{remComput} and \eqref{var1} we can write  
\begin{eqnarray}\label{aus1}
\Var(\mathcal M_T(u)[1]) = 2T\phi(u)^2\int_0^T \left ( 1- \frac{\tau}{T} \right ) \,C_0(\tau)\,d\tau\,.
\end{eqnarray}
If $\beta_0=1$, recalling from Condition \ref{basic2} that in this case the covariance $C_0$ is integrable on $\mathbb R$, we immediately have (thanks to Dominated convergence Theorem) the exact asymptotic behavior of the variance 
$$
\lim_{T\to\infty} \frac{\Var(\mathcal M_T(u)[1])}{T} = 2\phi(u)^2\int_0^{+\infty} C_0(\tau)\,d\tau = \phi(u)^2\int_{\mathbb R} C_0(\tau)\,d\tau\,.
$$
Now assume $\beta_0<1$. Let $\varepsilon>0$, thanks to Condition \ref{basic2}, there exists $M>0$ such that, for $\tau>M$,
\begin{equation}\label{sup}
\sup_{\ell\in \widetilde{\mathbb N}}\left | \frac{G_\ell(\tau)}{C_\ell(0)}-1 \right |<\varepsilon \,,
\end{equation}
 and we can write (from \eqref{aus1})
\begin{eqnarray}\label{var1aus}
\Var(\mathcal M_T(u)[1]) &=&2T\phi(u)^2\int_0^M \left ( 1- \frac{\tau}{T} \right ) \,C_0(\tau)\,d\tau +  2T\phi(u)^2\int_M^T \left ( 1- \frac{\tau}{T} \right ) \,C_0(\tau)\,d\tau\cr
&=& O(1) + 2T\phi(u)^2\int_0^M C_0(\tau)\,d\tau + 2T\phi(u)^2\int_M^T \left ( 1- \frac{\tau}{T} \right ) \,C_0(\tau)\,d\tau\,.
\end{eqnarray} 
Consider the last integral on the right hand side of \eqref{var1aus} and write
\begin{eqnarray*}
\frac{1}{T^{1-\beta_0}}\int_M^T \left ( 1- \frac{\tau}{T} \right ) \,C_0(\tau)\,d\tau &=& \frac{C_0(0)}{T^{1-\beta_0}}\int_M^T \left ( 1- \frac{\tau}{T} \right ) (1+\tau)^{-\beta_0}\,d\tau\\
&& + \frac{C_0(0)}{T^{1-\beta_0}}\int_M^T \left ( 1- \frac{\tau}{T} \right ) \left(\frac{G_0(\tau)}{C_0(0)}-1\right)(1+\tau)^{-\beta_0}\,d\tau\,. 
\end{eqnarray*} 
We have 
\begin{equation}\label{aus2}
\lim_{T\to\infty}  \frac{C_0(0)}{T^{1-\beta_0}}\int_M^T \left ( 1- \frac{\tau}{T} \right ) (1+\tau)^{-\beta_0}\,d\tau = \frac{C_0(0)}{(1-\beta_0 )(2-\beta_0 )}
\end{equation}
and 
\begin{equation}\label{aus3}
\lim_{T\to\infty}  \frac{C_0(0)}{T^{1-\beta_0}}\int_M^T \left ( 1- \frac{\tau}{T} \right ) \left(\frac{G_0(\tau)}{C_0(0)}-1\right)(1+\tau)^{-\beta_0}\,d\tau = 0\,.
\end{equation}
The proof of \eqref{aus2} is straightforward; recall that by assumption $C_0(0) >0$. It remains to prove \eqref{aus3}. For $T>M$
\begin{eqnarray*}
&& \frac{C_0(0)}{T^{1-\beta_0}}\int_M^T \left ( 1- \frac{\tau}{T} \right ) \left|\frac{G_0(\tau)}{C_0(0)}-1\right |(1+\tau)^{-\beta_0}\,d\tau \le  \varepsilon \frac{C_0(0)}{T^{1-\beta_0}}\int_M^T  (1+\tau)^{-\beta_0}\,d\tau\\
 && =  \varepsilon \frac{C_0(0)}{1-\beta_0} \left ( \left ( 1 + \frac{1}{T}   \right )^{1-\beta_0} -    \left (\frac{M+1}{T}   \right )^{1-\beta_0}      \right ) \le \varepsilon \frac{C_0(0)}{1-\beta_0}\left(1+\frac 1T  \right)^{1-\beta_0}.
   \end{eqnarray*} 
Hence 
\begin{eqnarray*}
\limsup_{T\to +\infty} \left | \frac{C_0(0)}{T^{1-\beta_0}}\int_M^T \left ( 1- \frac{\tau}{T} \right ) \left (\frac{G_0(\tau)}{C_0(0)}-1\right)(1+\tau)^{-\beta_0}\,d\tau \right |\le  \varepsilon \frac{C_0(0)}{1-\beta_0}
\end{eqnarray*}
and the result follows, $\varepsilon$ being arbitrary.  
Plugging \eqref{aus2} and \eqref{aus3} into \eqref{var1aus} we find 
$$
\lim_{T\to\infty} \frac{\Var(\mathcal M_T(u)[1])}{T^{2-\beta_0}} = \frac{2\phi(u)^2 C_0(0)}{(1-\beta_0 )(2-\beta_0 )},\quad \beta_0\in (0,1)
$$
that concludes the proof. 
\end{proof}

\subsection{Second order chaotic projections}

Our next step is a careful analysis for the variance of the
second order chaotic component $\mathcal M_T(u)[2]$ (\eqref{exp_s} and \eqref{CE_M}, which will play a dominating role in most long memory scenarios (see \S \ref{sec_lm}). 
For $q=2$ we have 
\begin{equation*}
\Var\left( \mathcal{M}_{T}(u)[2]\right) =\frac{u^{2}\phi (u)^{2}}{2}%
\int_{[0,T]^{2}}\int_{\mathbb{S}^{2}\times \mathbb{S}^{2}}\Gamma (\langle
x,y\rangle ,t-s)^{2}\,dxdydtds\,.
\end{equation*}%
Now, thanks to \eqref{eq2} and \eqref{eqConv}, 
\begin{eqnarray*}
&&\int_{[0,T]^{2}}\int_{\mathbb{S}^{2}\times \mathbb{S}^{2}}\Gamma (\langle
x,y\rangle ,t-s)^{2}\,dx\,dy\,dt\,ds \\
&=&\int_{[0,T]^{2}}\int_{\mathbb{S}^{2}\times \mathbb{S}^{2}}\left(
\sum_{\ell =0}^{\infty }\,C_{\ell }(t-s)\,\frac{(2\ell +1)}{4\pi }\,P_{\ell
}(\langle x,y\rangle )\right) ^{2}\,dx\,dy\,dt\,ds \\
&=&\sum_{\ell _{1},\ell _{2}=0}^{\infty }\int_{[0,T]^{2}} C_{\ell
_{1}}(t-s)\,C_{\ell _{2}}(t-s)\,\frac{(2\ell _{1}+1)(2\ell _{2}+1)}{(4\pi
)^{2}}\times \cr
&&\times \int_{\mathbb{S}^{2}\times \mathbb{S}^{2}}P_{\ell _{1}}(\langle
x,y\rangle )\,P_{\ell _{2}}(\langle x,y\rangle )\,dx\,dy\,dt\,ds \\
&=& \sum_{\ell _{1},\ell _{2}=0}^{\infty }\int_{[0,T]^{2}} C_{\ell
_{1}}(t-s)\,C_{\ell _{2}}(t-s)\,\frac{(2\ell _{1}+1)(2\ell _{2}+1)}{(4\pi
)^{2}}   \frac{(4\pi)^2}{2\ell_1+1}\mathbf{1}_{\ell_1=\ell_2}\,dtds \\
&=&\sum_{\ell_1 =0}^{\infty }(2\ell_1 +1)\int_{[0,T]^{2}}\,C_{\ell_1
}(t-s)^{2}\,dt\,ds.
\end{eqnarray*}
Hence 
\begin{eqnarray*}
\Var\left( \mathcal{M}_{T}(u)[2]\right) &=&\frac{u^{2}\phi (u)^{2}}{2}%
\sum_{\ell =0}^{\infty }(2\ell +1)\int_{[0,T]^{2}}\,C_{\ell
}(t-s)^{2}\,dt\,ds \,.
\end{eqnarray*}
The next result is of fundamental importance for the study of the asymptotic behavior of $\Var\left( \mathcal{M}_{T}(u)[2]\right)$, its proof will be given in \S \ref{secVar2}.

\begin{lemma}\label{lemma-var2nd}
Fix $\ell \in \widetilde{ \mathbb{N}}$. If $2\beta _{\ell}<1$, then%
\begin{equation*}
\lim_{T\rightarrow \infty }\frac{1}{T^{2-2\beta _{\ell }}}%
\int_{[0,T]^2}C_{\ell }^{2}(t-s)dtds=\frac{C_{\ell }(0)^{2}}{(1-\beta_{\ell})(1-2\beta_{\ell})}\text{ .}
\end{equation*}
If $2\beta_\ell =1$, then 
\begin{equation*}
\lim_{T\rightarrow \infty }\frac{1}{T\log T}%
\int_{[0,T]^2}C_{\ell }^{2}(t-s)dtds=2 C_{\ell }(0)^{2}. 
\end{equation*} 
If $2\beta _{\ell}>1$, then
\begin{equation*}
\lim_{T\rightarrow \infty }\frac{1}{T}%
\int_{[0,T]^2}C_{\ell }^{2}(t-s)dtds=\int_{\mathbb R}C_\ell(\tau)^2\, d\tau\,.
\end{equation*}
\end{lemma}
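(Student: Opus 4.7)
\medskip

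\noindent\textbf{Proof plan for Lemma \ref{lemma-var2nd}.} The strategy mirrors the proof of Lemma \ref{lem1}, replacing $C_0$ by $C_\ell^2$. First I would reduce the double integral to a single one via the change of variables $\tau=t-s$, exactly as in Remark \ref{remComput}: since $C_\ell^2$ is even and nonnegative, the same computation gives
\begin{equation*}
\int_{[0,T]^2} C_\ell(t-s)^2\,dt\,ds \;=\; 2T\int_0^T\!\Bigl(1-\tfrac{\tau}{T}\Bigr)\,C_\ell(\tau)^2\,d\tau.
\end{equation*}
This reduces every case to a one-dimensional asymptotic analysis of $\int_0^T (1-\tau/T)\,C_\ell(\tau)^2\,d\tau$ as $T\to+\infty$.

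The case $2\beta_\ell>1$ is the easiest: either $\beta_\ell\in(1/2,1)$, in which case $C_\ell(\tau)^2$ is asymptotically of order $(1+|\tau|)^{-2\beta_\ell}$ with $2\beta_\ell>1$, or $\beta_\ell=1$ and then by Condition \ref{basic2} the factor $g_{\beta_\ell}(\tau)=(1+|\tau|)^{-\alpha}$ with $\alpha\ge 2$ forces rapid decay. In both sub-cases $C_\ell^2\in L^1(\mathbb R)$, so dominated convergence gives $\lim_{T\to\infty}\int_0^T(1-\tau/T)C_\ell(\tau)^2\,d\tau=\int_0^{+\infty}C_\ell(\tau)^2\,d\tau$, whence the stated limit $\int_{\mathbb R}C_\ell(\tau)^2\,d\tau$ after multiplying by $2$.

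For $2\beta_\ell<1$ I would follow the same split as in Lemma \ref{lem1}. Fix $\varepsilon>0$ and, using Condition \ref{basic2}, choose $M>0$ so large that $|G_\ell(\tau)/C_\ell(0)-1|<\varepsilon$ for $\tau>M$; then $C_\ell(\tau)^2=C_\ell(0)^2(1+\tau)^{-2\beta_\ell}+R_\ell(\tau)$ with $|R_\ell(\tau)|\le (2\varepsilon+\varepsilon^2)\,C_\ell(0)^2(1+\tau)^{-2\beta_\ell}$ for $\tau>M$. The contribution from $[0,M]$ is $O(1)=o(T^{1-2\beta_\ell})$, and the main term
\begin{equation*}
\frac{C_\ell(0)^2}{T^{1-2\beta_\ell}}\int_M^T\Bigl(1-\tfrac{\tau}{T}\Bigr)(1+\tau)^{-2\beta_\ell}\,d\tau
\end{equation*}
is computed by the change of variables $u=\tau/T$ and dominated convergence to yield $C_\ell(0)^2\int_0^1 (1-u)u^{-2\beta_\ell}\,du = C_\ell(0)^2/[(1-2\beta_\ell)(2-2\beta_\ell)]$. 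The remainder piece is bounded in absolute value by $C(2\varepsilon+\varepsilon^2)$ uniformly in $T$, and since $\varepsilon$ is arbitrary this gives the claimed limit after multiplication by $2T$ and division by $T^{2-2\beta_\ell}$.

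The critical case $2\beta_\ell=1$ is the delicate one and the main place where the argument must be reworked rather than copied from Lemma \ref{lem1}. Here the principal integrand $(1+\tau)^{-1}$ produces a logarithmic divergence. With the same splitting, I would show
\begin{equation*}
\int_M^T\Bigl(1-\tfrac{\tau}{T}\Bigr)(1+\tau)^{-1}\,d\tau = \log T + O(1) \quad\text{as } T\to+\infty,
\end{equation*}
by writing $(1-\tau/T)(1+\tau)^{-1}=(1+\tau)^{-1}-T^{-1}\,\tau(1+\tau)^{-1}$ and observing that the second term contributes an $O(1)$ once integrated. The error coming from the $R_\ell$ piece is bounded by $C\varepsilon\log T$, and dividing by $T\log T$ after multiplication by $2T\,C_\ell(0)^2$ (times the $1+o(1)$ factor carrying the $\varepsilon$ dependence) yields the limit $2C_\ell(0)^2$ once $\varepsilon\downarrow 0$. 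The only slight subtlety is tracking the $O(1)$ versus $O(\log T)$ terms so that the error bound is genuinely $o(\log T)$; beyond that the argument is routine.
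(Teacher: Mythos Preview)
Your proposal is correct and follows essentially the same route as the paper: reduce to a single integral via Remark~\ref{remComput}, handle the integrable case $2\beta_\ell>1$ by dominated convergence, and in the non-integrable cases split at $M$ from Condition~\ref{basic2}, isolate the principal part $C_\ell(0)^2(1+\tau)^{-2\beta_\ell}$, and bound the remainder by $O(\varepsilon)$ times the main scale. Your treatment of the critical case $2\beta_\ell=1$ is in fact more explicit than the paper's, which simply states that ``the same arguments'' apply.
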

Let us write 
\begin{flalign}
&\Var\left( \mathcal{M}_{T}(u)[2]\right) =\frac{u^{2}\phi (u)^{2}}{2}\int_{[0,T]^{2}} C_{0
}(t-s)^{2} dt\,ds+\frac{u^{2}\phi (u)^{2}}{2}%
\sum_{\ell =1}^{\infty }(2\ell +1)\int_{[0,T]^{2}} C_{\ell
}(t-s)^{2} dt\,ds \,. \label{eq2-var2nd} \qquad
\end{flalign}
Now recall the definition of $\beta_{\ell^\star}$ in \eqref{def_star}. 

\begin{proposition}\label{prop-var2nd}
\label{var2ndchaos} Assume $u\ne 0$. For $2\beta _{\ell ^{\star }}<1$ and $\beta_{\ell^\star}\le \beta_0$, we have that
\begin{equation}\label{th21}
\lim_{T\rightarrow \infty }\frac{\Var\left( \mathcal{M}
_{T}(u)[2]\right) }{T^{2-2\beta _{\ell ^{\star }}}}=\frac{u^{2}\phi (u)^{2}}{2(1-2\beta _{\ell ^{\star }})(1-\beta _{\ell ^{\star }})}\sum_{\ell \in \mathcal{I}^{\star }}(2\ell +1)\,C_{\ell}(0)^2\,;
\end{equation}
for $2\beta _{\ell ^{\star }}<1$ and $\beta_0 < \beta_{\ell^\star}$,
\begin{equation*}
\lim_{T\rightarrow \infty }\frac{\Var\left( \mathcal{M}%
_{T}(u)[2]\right) }{T^{2-2\beta _0}}=\frac{u^{2}\phi (u)^{2}}{2(1-2\beta _{0})(1-\beta _{0})}C_{0}(0)^2\,;
\end{equation*}
for $2\beta_{\ell^\star}= 1$ and $\beta_{\ell^\star}\le \beta_0$,
\begin{equation}\label{2log}
\lim_{T\rightarrow \infty }\frac{\Var\left( \mathcal{M}%
_{T}(u)[2]\right) }{T\log T}=u^{2}\phi (u)^{2}
\sum_{\ell \in \mathcal{I}^{\star }}(2\ell +1)\,C_{\ell}(0)^2\,;
\end{equation}
for $\beta_{\ell^\star}= \frac12$ and $\beta_0 <\beta_{\ell^\star}$, 
\begin{equation*}
\lim_{T\rightarrow \infty }\frac{\Var\left( \mathcal{M}%
_{T}(u)[2]\right) }{T^{2-2\beta_0}}=\frac{u^{2}\phi (u)^{2}}{2(1-2\beta _{0})(1-\beta _{0})}C_{0}(0)^2\,;
\end{equation*}
 for $2\beta _{\ell ^{\star }}>1$ and $2\beta_0 > 1$,
\begin{equation}\label{2T}
\lim_{T\rightarrow \infty }\frac{\Var\left( \mathcal{M}%
_{T}(u)[2]\right) }{T\,}=\frac{u^{2}\phi (u)^{2}}{2}\sum_{\ell =0}^{\infty }(2\ell +1)\int_{\mathbb R}C_\ell(\tau)^2\, d\tau\,;
\end{equation}
for  $2\beta _{\ell ^{\star }}>1$ and $2\beta_0 = 1$,
\begin{equation*}
\lim_{T\rightarrow \infty }\frac{\Var\left( \mathcal{M}%
_{T}(u)[2]\right) }{T\log T}=u^{2}\phi (u)^{2}\int_{\mathbb R}C_0(\tau)^2\,;
\end{equation*}
finally, for $2\beta _{\ell ^{\star }}>1$ and $2\beta_0 < 1$,
\begin{equation*}
\lim_{T\rightarrow \infty }\frac{\Var\left( \mathcal{M}%
_{T}(u)[2]\right) }{T^{2-2\beta _0}}=\frac{u^{2}\phi (u)^{2}}{2(1-2\beta _{0})(1-\beta _{0})}C_{0}(0)^2\,.
\end{equation*}
\end{proposition}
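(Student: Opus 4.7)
The plan is to apply Lemma \ref{lemma-var2nd} term by term to the expansion \eqref{eq2-var2nd}, which I rewrite as
$$\Var(\mathcal M_T(u)[2]) \;=\; \frac{u^{2}\phi(u)^{2}}{2}\sum_{\ell=0}^{\infty}(2\ell+1)\,I_\ell(T), \qquad I_\ell(T):=\int_{[0,T]^2}C_\ell(t-s)^{2}\,dt\,ds.$$
Lemma \ref{lemma-var2nd} identifies three possible rates for $I_\ell(T)$ according to whether $2\beta_\ell$ lies below, equal to, or above $1$. The seven subcases of the proposition then correspond to the different outcomes of the competition between the $\ell=0$ term (whose rate is governed by $\beta_0$) and the tail $\sum_{\ell\ge 1}$ (whose rate is governed by $\beta_{\ell^\star}$).

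For each subcase, I would fix the correct normalising rate $r(T)\in\{T^{2-2\beta_{\ell^\star}},\,T^{2-2\beta_0},\,T\log T,\,T\}$ dictated by the hypotheses, divide through by $r(T)$, and use Lemma \ref{lemma-var2nd} for the termwise limits. Indices $\ell$ with a strictly larger exponent than the controlling one contribute $o(1)$ after normalisation, while the indices realising the controlling exponent (either $\{0\}$, or $\mathcal I^\star$, or their union when $\beta_0=\beta_{\ell^\star}$) produce the explicit constants displayed in \eqref{th21}--\eqref{2T} and the analogues. In particular, in subcases where only $\ell=0$ is controlling (subcases 2, 4, 6, 7) the limit is $\tfrac{u^2\phi(u)^2}{2}\cdot C_0(0)^2$ times the appropriate analytic factor; in subcases 1 and 3 the finite sum over $\mathcal I^\star$ provides the limit; and in subcase 5 (short memory) every $\ell$ contributes at rate $T$, producing the infinite series $\sum_\ell(2\ell+1)\int_{\mathbb R}C_\ell(\tau)^2\,d\tau$.

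The crucial technical point common to all subcases is the interchange of limit and infinite sum in $\ell$. To justify it I would use Condition \ref{basic2} to produce a uniform majorant: for every $\varepsilon>0$ there exists $M>0$ such that $|C_\ell(\tau)|\le(1+\varepsilon)\,C_\ell(0)\,g_{\beta_\ell}(|\tau|)$ holds uniformly in $\ell\in\widetilde{\mathbb N}$ for $|\tau|\ge M$, while on $|\tau|\le M$ we have $|C_\ell(\tau)|\le C_\ell(0)$ by positive semidefiniteness. A direct calculation along the lines of Remark \ref{remComput} then yields a bound of the form
$$
\frac{(2\ell+1)\,I_\ell(T)}{r(T)} \;\le\; c\,(2\ell+1)\,C_\ell(0),
$$
valid for all $T$ large enough and summable in $\ell$ by \eqref{eqConv}. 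Dominated convergence then permits passage of $T\to\infty$ inside the series.

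I expect this uniform majorisation step, specifically in the short memory subcase \eqref{2T}, to be the main obstacle. There, every multipole contributes at the common rate $T$, so one has to verify absolute convergence of the limiting series $\sum_\ell(2\ell+1)\int_{\mathbb R}C_\ell(\tau)^{2}\,d\tau$ and control the tail uniformly in $T$; this relies delicately on the uniform bound $\sup_\ell|G_\ell(\tau)/C_\ell(0)-1|=o(1)$ from Condition \ref{basic2} combined with $2\beta_{\ell^\star}>1$, which guarantees $\int g_{\beta_\ell}^{2}<\infty$ uniformly in $\ell\ge 1$. In all other subcases only a finite collection of indices (either $\{0\}$ alone, or $\mathcal I^\star$ — which is finite in the theorem statements where this quantity appears) drives the limit, and the remainder is easily absorbed as $o(r(T))$ via the bound above.
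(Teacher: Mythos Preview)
Your proposal is correct and follows essentially the same route as the paper: expand via \eqref{eq2-var2nd}, apply Lemma \ref{lemma-var2nd} termwise, and justify the interchange of limit and sum by a uniform-in-$\ell$ majorant built from Condition \ref{basic2} so that dominated convergence applies (the paper packages these majorants as Lemmas \ref{tec2}--\ref{tec2_3}). One small inaccuracy: you remark that $\mathcal I^\star$ is finite ``in the theorem statements where this quantity appears,'' but Proposition \ref{prop-var2nd} does \emph{not} assume $\#\mathcal I^\star<\infty$, so in subcases \eqref{th21} and \eqref{2log} the dominant part is itself a possibly infinite sum and needs the same DCT justification as the remainder---your uniform bound already covers this, so the argument is unaffected. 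Also, the natural majorant coming out of the calculation is $c\,(2\ell+1)\,C_\ell(0)^2$ rather than $c\,(2\ell+1)\,C_\ell(0)$ (cf.\ \eqref{convSeries2}); either is summable, so this is cosmetic.
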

Recall that by assumption $C_0(0)>0$, and for $\ell\in \mathcal I^\star$ we have $C_{\ell}(0)>0$ (see \eqref{def_star}); as a consequence, Proposition \ref{prop-var2nd} gives the exact rate for the variance, the limiting constants being strictly positive.

\begin{remark}
In words, for $\beta_{\ell^\star}\le \beta_0$, when $2\beta _{\ell ^{\star }}<1$ (resp.~$2\beta _{\ell ^{\star }}=1$), we
have a form of long-range dependence and the second order chaotic component
of the functional $\mathcal{M}_{T}(u)$ is dominated by a subset of the
multipoles; the variance scales as order $T^{2-2\beta _{\ell ^{\star }}}$ (resp.~$T\log T$). On
the contrary, when $2\beta _{\ell ^{\star }}>1$, a
form of short-range dependence holds and all frequencies contribute with
variance terms of order $T$.
\end{remark}
In order to prove Proposition \ref{prop-var2nd} we will also need the following technical results. The proofs of Lemma \ref{tec2} and \ref{tec2_1} are given in the Appendix \S \ref{app}, the proofs of Lemma \ref{tec2_2} and Lemma \ref{tec2_3}
are indeed similar and we omit the details for brevity. 
\begin{lemma}\label{tec2}
Let $\varepsilon, M>0$ be as in \eqref{sup}.  
For $\ell$ such that $\beta_\ell =1$ and $T> \max(1, M)$
\begin{eqnarray}\label{beta1}
\frac{1}{T}\int_{[0,T]^{2}}\,C_{\ell}(t-s)^{2}\,dtds \le 2C_\ell(0)^2 \left ( M + 2\frac{(\varepsilon+1)^2}{\alpha -1} \right),
\end{eqnarray}
where $\alpha \ge 2$ comes from the definition in \eqref{gbeta}. 
\end{lemma}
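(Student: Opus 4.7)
The plan is to reduce the double integral to a one-dimensional integral and then split the range of integration at $M$, exploiting on $[0,M]$ the positive semi-definiteness of $C_\ell$ and on $(M,T]$ the decomposition $C_\ell(\tau)=G_\ell(\tau)\cdot g_{\beta_\ell}(\tau)$ from Condition \ref{basic2}.

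First, by the change of variable argument detailed in Remark \ref{remComput} (applied now to $C_\ell^2$ in place of $C_0$, which is legitimate since only symmetry and continuity were used there), one obtains
\begin{equation*}
\int_{[0,T]^{2}}C_\ell(t-s)^2\,dt\,ds \;=\; 2T\int_0^T\Bigl(1-\frac{\tau}{T}\Bigr)C_\ell(\tau)^2\,d\tau,
\end{equation*}
so that, dropping the factor $(1-\tau/T)\le 1$,
\begin{equation*}
\frac{1}{T}\int_{[0,T]^{2}}C_\ell(t-s)^2\,dt\,ds \;\le\; 2\int_0^T C_\ell(\tau)^2\,d\tau \;=\; 2\int_0^M C_\ell(\tau)^2\,d\tau \;+\; 2\int_M^T C_\ell(\tau)^2\,d\tau.
\end{equation*}

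For the integral on $[0,M]$, I would use the fact that $C_\ell$ is a covariance function, hence positive semi-definite, so $|C_\ell(\tau)|\le C_\ell(0)$ for every $\tau\in\mathbb R$. This yields $\int_0^M C_\ell(\tau)^2\,d\tau \le M\, C_\ell(0)^2$.

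For the integral on $(M,T]$, since $\beta_\ell=1$ the function $g_{\beta_\ell}$ coincides with $\tau\mapsto(1+|\tau|)^{-\alpha}$ by \eqref{gbeta}. Using $C_\ell(\tau)=G_\ell(\tau)(1+|\tau|)^{-\alpha}$ together with the uniform bound \eqref{sup}, which gives $|G_\ell(\tau)|\le (1+\varepsilon)C_\ell(0)$ for $\tau>M$, one obtains
\begin{equation*}
C_\ell(\tau)^2 \;\le\; (1+\varepsilon)^2\,C_\ell(0)^2\,(1+\tau)^{-2\alpha},\qquad \tau>M.
\end{equation*}
Integrating and using $\alpha\ge 2$, so that $\tfrac{1}{2\alpha-1}\le \tfrac{2}{\alpha-1}$, leads to
\begin{equation*}
\int_M^T C_\ell(\tau)^2\,d\tau \;\le\; (1+\varepsilon)^2 C_\ell(0)^2\int_0^{+\infty}(1+\tau)^{-2\alpha}\,d\tau \;\le\; \frac{2(1+\varepsilon)^2}{\alpha-1}\,C_\ell(0)^2.
\end{equation*}
Summing the two bounds yields \eqref{beta1}. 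No step is really an obstacle; the only subtle point is choosing the slightly lossy estimate $\tfrac{1}{2\alpha-1}\le \tfrac{2}{\alpha-1}$ to produce the constant stated in the lemma.
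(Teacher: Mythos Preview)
Your proof is correct and follows essentially the same route as the paper: reduce to a one-dimensional integral via Remark \ref{remComput}, split at $M$, bound the near part by $M\,C_\ell(0)^2$, and control the tail using \eqref{sup} together with the explicit form of $g_1$. The only cosmetic difference is that the paper reaches the constant $\tfrac{2}{\alpha-1}$ by estimating $g_1(\tau)^2\le |g_1(\tau)|$ and computing $\int_{\mathbb R}(1+|\tau|)^{-\alpha}\,d\tau$, whereas you integrate $(1+\tau)^{-2\alpha}$ directly and then invoke $\tfrac{1}{2\alpha-1}\le \tfrac{2}{\alpha-1}$.
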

\begin{lemma}\label{tec2_1}
Let $\varepsilon, M>0$ be as in \eqref{sup} and $2\beta_{\ell^\star} <1$.
\begin{itemize} 
\item For $\ell\in \mathcal I^\star$, $\beta_\ell<1$ and $T> \max(1, M)$ 
\begin{eqnarray}\label{RHS}
\frac{1}{T^{2-2\beta _{\ell ^{\star }}}}\int_{[0,T]^{2}}\,C_{\ell}(t-s)^{2}\,dtds \le 2C_\ell(0)^2 \left ( M +  \frac{(\varepsilon +1)^2}{-2\beta_{\ell^\star}+1}\left ( 1 + \frac1M    \right )^{1-2\beta_{\ell^\star}}  \right ).
\end{eqnarray} 
\item 
Let $m(\beta_{\ell^\star}) := \max_{x>0}\frac{\log (1+x)}{x^{1-2\beta_{\ell^\star}}}$ and $T_m$ the corresponding $\arg\max$. 
For $\ell\notin \mathcal I^\star$, $\ell\ge 1$, $\beta_\ell <1$ and $T>\max(1,M, T_m)$ we have 
\begin{flalign}\label{RHS2}
\frac{1}{T^{2-2\beta _{\ell ^{\star }}}}\int_{[0,T]^{2}}\,C_{\ell}(t-s)^{2}\,dtds &\le 2C_\ell(0)^2 \Big ( M + 2m(\beta_{\ell^\star}) (\varepsilon + 1)^2 \mathbf{1}_{2\beta_{\ell^{\star\star}}=1}\notag \\
& \quad+ (\varepsilon+1)^2 \frac{1}{-2\beta_{\ell^{\star\star}}+1}\left (1 + \frac1M\right )^{-2\beta_{\ell^{\star\star}}+1}\mathbf{1}_{2\beta_{\ell^{\star\star}}<  1} \notag\\
&\quad + (\varepsilon +1)^2\frac{1}{2\beta _{\ell ^{\star\star }}-1}\left (\frac{1}{1+M}    \right )^{2\beta_{\ell^{\star\star}}-1}\mathbf{1}_{2\beta_{\ell^{\star\star}}>  1}\Big ).
\end{flalign} 
\end{itemize}
\end{lemma}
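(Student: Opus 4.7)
The plan is to bound both integrals by the same strategy used in the proof of Lemma \ref{lem1}: perform the change of variable $\tau=t-s$ (as in Remark \ref{remComput}) to reduce the double integral to $\int_{[0,T]^2}C_\ell(t-s)^2\,dtds=2T\int_0^T(1-\tau/T)C_\ell(\tau)^2\,d\tau$, split the integration domain at $\tau=M$ (the threshold coming from \eqref{sup}), and treat the two pieces separately. On $[0,M]$ I would use the elementary bound $|C_\ell(\tau)|\le C_\ell(0)$ (Cauchy--Schwarz applied to the covariance), obtaining a contribution $\le 2TC_\ell(0)^2 M$; after dividing by $T^{2-2\beta_{\ell^\star}}$ and using $T>1$ with $1-2\beta_{\ell^\star}>0$, this yields the leading $M$ term in both \eqref{RHS} and \eqref{RHS2}.

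On the tail piece $[M,T]$, Condition \ref{basic2} combined with \eqref{sup} gives $|C_\ell(\tau)|\le(\varepsilon+1)C_\ell(0)(1+\tau)^{-\beta_\ell}$, and the problem reduces, up to the factor $2(\varepsilon+1)^2 C_\ell(0)^2$, to controlling $T^{-(1-2\beta_{\ell^\star})}\int_M^T(1+\tau)^{-2\beta_\ell}\,d\tau$. For \eqref{RHS} one has $\ell\in\mathcal{I}^\star$, so $\beta_\ell=\beta_{\ell^\star}$ and the integral is explicit; the bound $(1+T)^{1-2\beta_{\ell^\star}}/T^{1-2\beta_{\ell^\star}}=(1+1/T)^{1-2\beta_{\ell^\star}}\le(1+1/M)^{1-2\beta_{\ell^\star}}$ (valid since $T>M$) produces the second term on the right-hand side of \eqref{RHS}.

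For \eqref{RHS2} one has $\ell\notin\mathcal{I}^\star$, hence $\beta_\ell\ge\beta_{\ell^{\star\star}}$; the monotonicity $(1+\tau)^{-\beta_\ell}\le(1+\tau)^{-\beta_{\ell^{\star\star}}}$ reduces everything to the integral with exponent $2\beta_{\ell^{\star\star}}$, and I would then split into the three cases $2\beta_{\ell^{\star\star}}>1$, $=1$, $<1$. The case $2\beta_{\ell^{\star\star}}>1$ is immediate by integrability on $[M,\infty)$ together with $T^{-(1-2\beta_{\ell^\star})}\le 1$, yielding the factor $(1+M)^{-(2\beta_{\ell^{\star\star}}-1)}/(2\beta_{\ell^{\star\star}}-1)$. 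In the case $2\beta_{\ell^{\star\star}}=1$, a direct integration produces $\log(1+T)-\log(1+M)\le\log(1+T)$, and the definition of $m(\beta_{\ell^\star})=\max_{x>0}\log(1+x)/x^{1-2\beta_{\ell^\star}}$, attained at $T_m$, bounds $\log(1+T)/T^{1-2\beta_{\ell^\star}}$ uniformly in $T$; the hypothesis $T>\max(1,M,T_m)$ ensures this substitution is valid and gives the $m(\beta_{\ell^\star})$ term.

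The main subtlety is the case $2\beta_{\ell^{\star\star}}<1$, where a naive bound loses a factor of $T^{2(\beta_{\ell^\star}-\beta_{\ell^{\star\star}})}$ and gives only $(1+1/M)^{1-2\beta_{\ell^\star}}$ instead of the sharper $(1+1/M)^{1-2\beta_{\ell^{\star\star}}}$ appearing in \eqref{RHS2}. The key trick is to factor $(1+T)^{1-2\beta_{\ell^{\star\star}}}=T^{1-2\beta_{\ell^{\star\star}}}(1+1/T)^{1-2\beta_{\ell^{\star\star}}}$ and then, using $T>M\ge 1$, bound $(1+1/T)^{1-2\beta_{\ell^{\star\star}}}\le(1+1/M)^{1-2\beta_{\ell^{\star\star}}}$ and $T^{1-2\beta_{\ell^{\star\star}}}/T^{1-2\beta_{\ell^\star}}=T^{2(\beta_{\ell^\star}-\beta_{\ell^{\star\star}})}\le 1$ (since $\beta_{\ell^\star}<\beta_{\ell^{\star\star}}$). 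This separation is what produces the correct exponent $1-2\beta_{\ell^{\star\star}}$ in the final estimate and is the only non-routine step in the argument.
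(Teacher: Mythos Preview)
Your proposal is correct and follows essentially the same approach as the paper: start from the preliminary bound \eqref{stima1} (i.e.\ the change of variable of Remark \ref{remComput}, the split at $M$, and the estimate $|C_\ell(\tau)|\le(\varepsilon+1)C_\ell(0)(1+\tau)^{-\beta_\ell}$ on $[M,T]$), then treat the three regimes for $2\beta_{\ell^{\star\star}}$ exactly as you describe. Your identification of the ``key trick'' in the case $2\beta_{\ell^{\star\star}}<1$ --- factoring $(1+T)^{1-2\beta_{\ell^{\star\star}}}=T^{1-2\beta_{\ell^{\star\star}}}(1+1/T)^{1-2\beta_{\ell^{\star\star}}}$ and using $T^{2(\beta_{\ell^\star}-\beta_{\ell^{\star\star}})}\le 1$ --- is precisely what the paper does, and the remaining cases are handled identically.
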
 
\begin{lemma}\label{tec2_2}
Let $\varepsilon, M>0$ be as in \eqref{sup} and $2\beta_{\ell^\star} =1$.
\begin{itemize} 
\item For $\ell\in \mathcal I^\star$, $\beta_\ell<1$ and $T> \max(1, M,e)$ 
\begin{eqnarray*}
\frac{1}{T\log T}\int_{[0,T]^{2}}\,C_{\ell}(t-s)^{2}\,dtds \le 2C_\ell(0)^2 \left ( M + \log(e+1)  \right ).
\end{eqnarray*} 
\item 
For $\ell\notin \mathcal I^\star$, $\ell\ge 1$, $\beta_\ell<1$ and $T> \max(1, M)$ 
\begin{eqnarray*}
\frac{1}{T\log T}\int_{[0,T]^{2}}\,C_{\ell}(t-s)^{2}\,dtds \le 2C_\ell(0)^2 \left ( M + \int_{\mathbb R} (1 +|\tau|)^{-2\beta_{\ell^{\star\star}}})\,d\tau  \right ).
\end{eqnarray*} 

\end{itemize}
\end{lemma}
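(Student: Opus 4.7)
The plan is to mimic the strategy of Lemma \ref{tec2_1}, replacing the polynomial factor $T^{2-2\beta_{\ell^\star}}$ by the logarithmic factor $T\log T$, and tracking the borderline case $2\beta_{\ell^\star}=1$ separately.

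\textbf{Step 1: Reduction.} Applying the change of variables of Remark \ref{remComput}, I would rewrite
\begin{equation*}
\int_{[0,T]^2} C_\ell(t-s)^2\,dtds \;=\; 2T\int_0^T\!\left(1-\frac{\tau}{T}\right)C_\ell(\tau)^2\,d\tau,
\end{equation*}
and split the $\tau$-integral at the threshold $M>0$ from \eqref{sup}. On $[0,M]$, the elementary bound $|C_\ell(\tau)|\le C_\ell(0)$ (positive semidefiniteness) immediately yields a contribution $\le 2TMC_\ell(0)^2$. Dividing by $T\log T$ and using $T>e$ (so $\log T>1$) produces the term $2MC_\ell(0)^2$ appearing in both statements.

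\textbf{Step 2: Tail bound via Condition \ref{basic2}.} On $[M,T]$ I would insert the factorisation $C_\ell(\tau)=G_\ell(\tau)g_{\beta_\ell}(\tau)$ together with $|G_\ell(\tau)|\le(1+\varepsilon)C_\ell(0)$, so that
\begin{equation*}
\int_M^T\!\left(1-\frac{\tau}{T}\right)C_\ell(\tau)^2\,d\tau \;\le\; (1+\varepsilon)^2 C_\ell(0)^2 \int_M^T (1+\tau)^{-2\beta_\ell}\,d\tau .
\end{equation*}
Here I use that $\beta_\ell<1$ in both bullets, so $g_{\beta_\ell}(\tau)=(1+|\tau|)^{-\beta_\ell}$ (no need for the $\alpha$-branch of \eqref{gbeta}).

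\textbf{Step 3a: First bullet ($\ell\in\mathcal{I}^\star$).} Since $\beta_\ell=\beta_{\ell^\star}=1/2$, the tail integral evaluates to $\log(1+T)-\log(1+M)\le\log(1+T)$. Dividing by $\log T$ and noting that the map $T\mapsto\log(1+T)/\log T$ is decreasing for $T>1$ and equals $\log(1+e)$ at $T=e$, I obtain, for $T>\max(1,M,e)$,
\begin{equation*}
\frac{2}{\log T}\int_M^T\!\left(1-\frac{\tau}{T}\right)C_\ell(\tau)^2\,d\tau \;\le\; 2(1+\varepsilon)^2 C_\ell(0)^2\,\log(1+e),
\end{equation*}
and combining with Step~1 yields the claimed bound (absorbing $(1+\varepsilon)^2$ into the stated constant $2C_\ell(0)^2$, after choosing $\varepsilon$ small as in \eqref{sup}).

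\textbf{Step 3b: Second bullet ($\ell\notin\mathcal{I}^\star$, $\ell\ge1$).} By definition of $\beta_{\ell^{\star\star}}$ one has $\beta_\ell\ge\beta_{\ell^{\star\star}}>\beta_{\ell^\star}=1/2$, hence $2\beta_\ell\ge 2\beta_{\ell^{\star\star}}>1$ and $(1+\tau)^{-2\beta_\ell}\le(1+\tau)^{-2\beta_{\ell^{\star\star}}}$. The tail integral is therefore \emph{uniformly bounded} in $T$:
\begin{equation*}
\int_M^T (1+\tau)^{-2\beta_{\ell^{\star\star}}}\,d\tau \;\le\; \int_0^{+\infty}(1+\tau)^{-2\beta_{\ell^{\star\star}}}\,d\tau \;=\; \tfrac{1}{2}\int_{\mathbb{R}}(1+|\tau|)^{-2\beta_{\ell^{\star\star}}}\,d\tau .
\end{equation*}
Since $\log T\ge 1$ on $T>\max(1,M,e)$, the second contribution is absorbed into $2C_\ell(0)^2\int_{\mathbb{R}}(1+|\tau|)^{-2\beta_{\ell^{\star\star}}}\,d\tau$ (again modulo the harmless $(1+\varepsilon)^2$).

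\textbf{Main obstacle.} The only non-routine point is the bookkeeping in Step 3a: the numerator $\log(1+T)$ and the denominator $\log T$ have the same leading order but differ by a $T$-dependent ratio that is large near $T=1$. This forces the restriction $T>e$ and motivates the constant $\log(1+e)$ in the stated bound. Everything else is a direct transcription of the arguments already carried out in Lemma \ref{tec2_1}, which is why the authors omit the details.
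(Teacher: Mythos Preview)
Your approach is exactly the one the paper intends: the authors explicitly say the proof is ``indeed similar'' to that of Lemma~\ref{tec2_1} and omit it, and your Steps~1--3 transcribe that argument faithfully (the same reduction via Remark~\ref{remComput}, the same split at $M$, and the same tail estimate using \eqref{sup}).

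Two small remarks. First, your handling of the factor $(1+\varepsilon)^2$ is slightly off: you cannot ``absorb'' it by choosing $\varepsilon$ small, because $M=M(\varepsilon)$ depends on $\varepsilon$ and the statement fixes both. In fact, comparing with Lemmas~\ref{tec2} and~\ref{tec2_1} (whose stated bounds \emph{do} retain $(\varepsilon+1)^2$), the inequality in Lemma~\ref{tec2_2} as written in the paper appears simply to have dropped this factor; your computation actually yields $2C_\ell(0)^2\bigl(M+(1+\varepsilon)^2\log(e+1)\bigr)$, which is the ``correct'' analogue and is all that is needed for the dominated-convergence application in Proposition~\ref{prop-var2nd}. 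Second, in the second bullet you use $\log T\ge 1$, which requires $T>e$; the stated hypothesis is only $T>\max(1,M)$, so either one assumes $M\ge e$ or adds $e$ to the max as in the first bullet. Neither point affects the substance of the argument or its use downstream.
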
 
\begin{lemma}\label{tec2_3}
Let $\varepsilon, M>0$ be as in \eqref{sup}. If $2\beta_{\ell^\star}>1$, for $\ell\ge 1$ and $\beta_\ell<1$ we have  
\begin{equation*}
\frac{1}{T}\int_{[0,T]^2} C_\ell(t-s)^2\,dtds \le 2C_{\ell}(0) \left ( M + (\varepsilon+1)^2 \int_{\mathbb R} (1 +|\tau|)^{-2\beta_{\ell^\star}}\,d\tau \right )
\end{equation*} 
whenever $T>\max(1,M)$.
\end{lemma}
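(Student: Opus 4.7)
The plan is to follow the same template used for Lemma \ref{tec2} and Lemma \ref{tec2_1}: rewrite the double integral as a one-dimensional integral via the change of variables in Remark \ref{remComput}, split the range of integration at the threshold $M$ coming from Condition \ref{basic2}, and bound each piece separately using (i) positive semidefiniteness of $C_\ell$ on the short-range piece and (ii) the power-law control $C_\ell(\tau) = G_\ell(\tau)g_{\beta_\ell}(\tau)$ on the long-range piece.

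First I would apply Remark \ref{remComput} to get
\begin{equation*}
\frac{1}{T}\int_{[0,T]^2} C_\ell(t-s)^2\,dtds = 2\int_0^T \left(1-\frac{\tau}{T}\right) C_\ell(\tau)^2\,d\tau,
\end{equation*}
and split this integral as $\int_0^M + \int_M^T$. On $[0,M]$, positive semidefiniteness gives $|C_\ell(\tau)|\le C_\ell(0)$, so $1-\tau/T\le 1$ yields
\begin{equation*}
2\int_0^M \left(1-\frac{\tau}{T}\right) C_\ell(\tau)^2\,d\tau \le 2M\,C_\ell(0)^2.
\end{equation*}

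For the tail piece, I would use that $\tau>M$ together with \eqref{sup} implies $|G_\ell(\tau)|\le(\varepsilon+1)C_\ell(0)$, and since $\beta_\ell<1$ the definition \eqref{gbeta} gives $g_{\beta_\ell}(\tau)=(1+\tau)^{-\beta_\ell}$; hence $C_\ell(\tau)^2\le(\varepsilon+1)^2 C_\ell(0)^2(1+\tau)^{-2\beta_\ell}$. The key observation is that $\beta_\ell\ge\beta_{\ell^\star}$ by the very definition \eqref{def_star}, so on $\tau\ge M\ge 0$
\begin{equation*}
(1+\tau)^{-2\beta_\ell}\le (1+\tau)^{-2\beta_{\ell^\star}},
\end{equation*}
and consequently
\begin{equation*}
2\int_M^T \left(1-\frac{\tau}{T}\right) C_\ell(\tau)^2\,d\tau \le 2(\varepsilon+1)^2 C_\ell(0)^2\int_0^{+\infty}(1+\tau)^{-2\beta_{\ell^\star}}d\tau.
\end{equation*}
Under the hypothesis $2\beta_{\ell^\star}>1$, this integral is finite, and by symmetry equals $\tfrac12\int_{\mathbb R}(1+|\tau|)^{-2\beta_{\ell^\star}}d\tau$. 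Adding the two bounds gives the claim.

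There is no real analytical obstacle here; the only point worth noting is that the dominating power $2\beta_{\ell^\star}$ (rather than $2\beta_\ell$) is what both makes the tail integrable and makes the bound uniform enough in $\ell$ to be useful later when summing over multipoles. The restriction $T>\max(1,M)$ is only used to guarantee that the splitting point $M$ lies inside $[0,T]$, so that the truncation is meaningful.
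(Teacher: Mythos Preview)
Your proof is correct and follows exactly the template the paper has in mind: the paper omits the proof of this lemma, stating only that it is ``indeed similar'' to those of Lemmas~\ref{tec2} and~\ref{tec2_1}, and your argument is precisely the appropriate specialization of the bound~\eqref{stima1} combined with the monotonicity $\beta_\ell\ge\beta_{\ell^\star}$ from~\eqref{def_star}. One cosmetic remark: the statement as printed has $2C_\ell(0)$ rather than $2C_\ell(0)^2$, which is a typo in the paper (compare~\eqref{stima1} and the analogous Lemmas~\ref{tec2}--\ref{tec2_2}); your bound correctly produces $C_\ell(0)^2$, and you need not pass through the symmetry identity---simply bounding $\int_M^T\le\int_{\mathbb R}$ already matches the stated right-hand side.
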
 
We are now in the position to prove Proposition \ref{prop-var2nd}. 

\begin{proof}[Proof of Proposition \ref{prop-var2nd}] 
Assume first that $2\beta _{\ell ^{\star }}<1$ and $\beta_{\ell^\star}\le \beta_0$. For the asymptotic behavior of the first term on the right hand side of \eqref{eq2-var2nd}) we refer to Lemma \ref{lemma-var2nd}:
\begin{eqnarray}\label{0}
\lim_{T\to\infty} \frac{\int_{[0,T]^2} C_0(t-s)^2\,dtds}{T^{2-2\beta_{\ell^\star}}} = \begin{cases}
0 \qquad &\text{if } \beta_{\ell^\star} < \beta_0,\\
\frac{C_{0}(0)^{2}}{(1-\beta_{\ell^\star})(1-2\beta_{\ell^\star})} \qquad &\text{if } \beta_{\ell^\star} = \beta_0\,.
\end{cases}
\end{eqnarray}
Now, since from \eqref{eqConv} we have
\begin{equation}\label{convSeries2}
\sum_{\ell=0}^{+\infty}  (2\ell +1) C_{\ell}(0)^{2}  < +\infty\,,
\end{equation}
thanks to Lemma \ref{tec2} and Lemma \ref{tec2_1} we can apply Dominate Convergence Theorem and then Lemma \ref{lemma-var2nd} to get 
\begin{eqnarray}
&&\lim_{T\rightarrow \infty }\sum_{\ell \in\mathcal{I}^\star}\frac{(2\ell +1)}{T^{2-2\beta _{\ell ^{\star }}}}\int_{[0,T]^{2}}\,C_{\ell}(t-s)^{2}\,dtds  \cr
&& = \sum_{\ell \in\mathcal{I}^\star} \lim_{T\rightarrow \infty } \frac{(2\ell +1)}{T^{2-2\beta _{\ell ^{\star }}}}\int_{[0,T]^{2}}C_{\ell}(t-s)^{2}\,dtds =  \frac{\sum_{\ell \in\mathcal{I}^\star} (2\ell +1) C_{\ell}(0)^{2}}{(1-\beta_{\ell^\star})(1-2\beta_{\ell^\star})}.\label{2aus1}
\end{eqnarray} 
Let us now prove that
\begin{equation}\label{notin}
\lim_{T\to\infty} \sum_{\ell \notin\mathcal{I}^\star, \ell\ge 1}\frac{(2\ell +1)}{T^{2-2\beta_{\ell^\star}}}\int_{[0,T]^{2}}\,C_{\ell}(t-s)^{2}\,dt\,ds  = 0\,.
\end{equation} 
Thanks again to Lemma \ref{tec2} and Lemma \ref{tec2_1}, since \eqref{convSeries2}) holds, we can apply Dominated Convergence Theorem  to obtain \eqref{notin}), 
more precisely we have to distinguish between the possible cases: from Lemma \ref{lemma-var2nd}\begin{eqnarray*} 
&&\lim_{T\to\infty}\sum_{\ell\ge 1:\beta_{\ell^\star}<\beta_\ell<\frac12} \frac{(2\ell +1)}{T^{2-2\beta_{\ell^\star}}}\int_{[0,T]^{2}}\,C_{\ell}(t-s)^{2}\,dt\,ds\\
&& =\sum_{\ell\ge 1:\beta_{\ell^\star}<\beta_\ell<\frac12} (2\ell +1) \lim_{T\rightarrow \infty }\underbrace{\frac{T^{2-2\beta _{\ell}}}{T^{2-2\beta _{\ell ^{\star }}}}}_{\rightarrow0}
\underbrace{\frac{1}{T^{2-2\beta _{\ell}}}\int_{[0,T]^{2}}\,C_{\ell}(t-s)^{2}\,dt\,ds}_{\rightarrow C_\ell(0)^2(1-\beta_\ell)^{-1}(1-2\beta_\ell)^{-1}} = 0;
\end{eqnarray*}
moreover 
\begin{eqnarray*}
&&\lim_{T\to\infty} \sum_{\ell\ge 1:\beta_\ell=\frac12}\frac{(2\ell +1)}{T^{2-2\beta_{\ell^\star}}}
\int_{[0,T]^{2}}\,C_{\ell}(t-s)^{2}\,dt\,ds\\
&& = \sum_{\ell\ge 1:\beta_\ell=\frac12} (2\ell +1)\lim_{T\to\infty}\underbrace{\frac{T\log T}{T^{2-2\beta_{\ell^\star}}}}_{\to 0}
\underbrace{\frac{1}{T\log T}\int_{[0,T]^{2}}\,C_{\ell}(t-s)^{2}\,dt\,ds}_{\to 2 C_{\ell }(0)^{2}}= 0\,;
\end{eqnarray*} 
and 
\begin{eqnarray*}  
&&\lim_{T\to\infty} \sum_{\ell\ge 1:\beta_\ell > \frac12} \frac{2\ell+1}{T^{2-2\beta_{\ell^\star}}}
\int_{[0,T]^{2}}\,C_{\ell}(t-s)^{2}\,dt\,ds \\
&&= \sum_{\ell\ge 1:\beta_\ell > \frac12}(2\ell+1) \lim_{T\to\infty} \underbrace{\frac{T}{T^{2-2\beta _{\ell ^{\star }}}}}_{\rightarrow0}
\underbrace{\frac 1 T\int_{[0,T]^{2}}\,C_{\ell}(t-s)^{2}\,dt\,ds}_{\rightarrow \int_{\mathbb R} C_\ell(\tau)^2\,d\tau}=0\,.
\end{eqnarray*}
Putting together \eqref{0}, \eqref{2aus1} and \eqref{notin} we immediately get \eqref{th21} in Proposition \ref{prop-var2nd}. 

Now assume $2\beta_0>1$ and $2\beta _{\ell ^{\star }}>1$. Then obviously $2\beta _{\ell}>1$ for each $\ell \in \widetilde{\mathbb{N}}$ and hence, 
 using Lemma \ref{tec2} and \ref{tec2_3} and Lemma \ref{lemma-var2nd} as before, we have
\begin{eqnarray*}
\lim_{T\rightarrow \infty }\frac{\Var\left( \mathcal{M}%
_{T}(u)[2]\right) }{T}&=&\lim_{T\rightarrow \infty }\frac{u^{2}\phi (u)^{2}}{2}\sum_{\ell =0}^{\infty }(2\ell +1)\frac1T \int_{[0,T]^{2}}\,C_{\ell}(t-s)^{2}\,dt\,ds\\
&=&\frac{u^{2}\phi (u)^{2}}{2}\sum_{\ell =0}^{\infty }(2\ell +1) \lim_{T\rightarrow \infty }2\int_{0}^{T}\left(1-\frac{\tau}{T}\right)C_{\ell }^{2}(\tau )d\tau\\
&=&\frac{u^{2}\phi (u)^{2}}{2}\sum_{\ell =0}^{\infty }(2\ell +1)\int_{-\infty}^{+\infty} C_\ell(\tau)^2\,d\tau\,,
\end{eqnarray*}
which is \eqref{2T}. 
Note that we automatically get 
$$
\sum_{\ell =0}^{\infty }(2\ell +1)\int_{-\infty}^{+\infty} C_\ell(\tau)^2\,d\tau < +\infty\,,
$$
and the proof is concluded, the remaining cases requiring analogous proofs. 
\end{proof}

\subsubsection{Proof of Lemma \ref{lemma-var2nd}}\label{secVar2}

\begin{proof} If $\beta_\ell \in \left (\frac12, 1 \right ]$ then  
from Remark \ref{remComput}, thanks to Dominated Convergence Theorem,
\begin{flalign*}
&\lim_{T\rightarrow \infty }\frac{1}{T}\int_{[0,T]^2}C_{\ell }^{2}(t-s)\,dtds = \lim_{T\rightarrow \infty}\int_{-T}^{T}\left(1-\frac{|\tau|}{T}\right)C_{\ell }(\tau)^2\,d\tau=\int_{\mathbb R}C_{\ell }(\tau)^2\,d\tau\,.
\end{flalign*}
Now assume that $2\beta _{\ell}<1$ and recall Condition \ref{basic2}, then as in Remark \ref{remComput} and the proof of Lemma \ref{lem1} we fix $\varepsilon>0$ and we know there exists $M>0$ such that, for $\tau>M$,
$$
\sup_{\ell}\left | \frac{G_\ell(\tau)}{C_\ell(0)}-1 \right |<\varepsilon
$$
(as in \eqref{sup}), so that
\begin{flalign}\label{aus_sec}
&\int_{[0,T]^2}C_{\ell }^{2}(t-s)dtds =2T\int_{0}^{T}\left(1-\frac{\tau}{T}\right)C_{\ell }^{2}(\tau )d\tau  \notag \\
&=2T\int_{0}^{M}\left(1-\frac{\tau}{T}\right)C_{\ell }^{2}(\tau )d\tau+2T\int_{M}^{T}\left(1-\frac{\tau}{T}\right)C_{\ell }^{2}(\tau )d\tau \text{ .}\notag \\
&= O(1) + 2T\int_{0}^{M}C_{\ell }^{2}(\tau )d\tau + 2TC_\ell(0)^2\int_{M}^{T}\left(1-\frac{\tau }{T}\right)\left (\frac{G_{\ell }(\tau)}{C_{\ell
}(0)}-1\right )^2(1+\tau)^{-2\beta _{\ell }} d\tau \notag \\
&\quad+ 4T\,C_\ell(0)^2\int_{M}^{T}\left(1-\frac{\tau }{T}\right)\left (\frac{G_{\ell }(\tau)}{C_{\ell
}(0)}-1\right )(1+\tau)^{-2\beta _{\ell }} d\tau\notag\\
&\quad+2TC_{\ell }(0)^{2}\int_{M}^{T}\left(1-\frac{\tau}{T}\right)(1+\tau)^{-2\beta _{\ell
}}d\tau \,.
\end{flalign}
For the second and the last summands of \eqref{aus_sec} it is straightforward to check that%
\begin{eqnarray*}
&&\lim_{T\to\infty} \frac{2T}{T^{2-2\beta_\ell}} \int_{0}^{M}C_{\ell }^{2}(\tau )d\tau = 0,\\
&&\lim_{T\rightarrow \infty }\frac{2TC_{\ell}(0)^{2}}{T^{2-2\beta _{\ell }}}\int_{M}^{T}\left(1-\frac{\tau}{T}\right)(1+\tau)^{-2\beta _{\ell }}d\tau =\frac{%
C_{\ell }(0)^{2}}{(1-\beta _{\ell })(1-2\beta _{\ell })}\text{ .}
\end{eqnarray*}
On the other hand, for the third and the fourth summands
\begin{eqnarray}\label{uniform0}
&&\lim_{T\rightarrow \infty }\frac{2TC_\ell(0)}{T^{2-2\beta _{\ell }}}\int_{M}^{T}\left(1-\frac{\tau }{T}\right)\left (\frac{G_{\ell }(\tau)}{C_{\ell
}(0)}-1\right )^2(1+\tau)^{-2\beta _{\ell }} d\tau   \cr
&&=\lim_{T\rightarrow \infty }\frac{4T\,C_\ell(0)}{T^{2-2\beta _{\ell }}}\int_{M}^{T}\left(1-\frac{\tau }{T}\right)\left (\frac{G_{\ell }(\tau)}{C_{\ell
}(0)}-1\right )(1+\tau)^{-2\beta _{\ell }} d\tau =0\,.
\end{eqnarray}%
Let us prove \eqref{uniform0}:  we have that 
\begin{eqnarray*}
&&\limsup_{T\to +\infty} \frac{2TC_\ell(0)}{T^{2-2\beta _{\ell }}}\int_{M}^{T}\left(1-\frac{\tau }{T}\right)\left (\frac{G_{\ell }(\tau)}{C_{\ell
}(0)}-1\right )^2(1+\tau)^{-2\beta _{\ell }} d\tau \\
&&\le \varepsilon^2 \limsup_{T\to +\infty} \frac{2C_\ell(0)}{T^{1-2\beta _{\ell }}}\int_{M}^{T}(1+\tau)^{-2\beta _{\ell }} d\tau \le \varepsilon^2 \frac{4C_\ell(0)}{1-2\beta_\ell}\,,
\end{eqnarray*}
and analogously that
\begin{eqnarray*}
&&\limsup_{T\to +\infty} \frac{4TC_\ell(0)}{T^{2-2\beta _{\ell }}}\int_{M}^{T}\left(1-\frac{\tau }{T}\right)\left | \frac{G_{\ell }(\tau)}{C_{\ell
}(0)}-1\right | (1+\tau)^{-2\beta _{\ell }} d\tau\le \varepsilon \frac{8C_\ell(0)}{1-2\beta_\ell}\,,
\end{eqnarray*}
and \eqref{uniform0} follows, $\varepsilon$ being arbitrary.
When $2\beta_\ell = 1$, then one can prove using the same arguments that
$$
\lim_{T\to\infty} \frac{1}{T\log T}\int_{[0,T]^2}C_{\ell }^{2}(t-s)dtds=2 C_{\ell}(0)^{2}
$$
and the proof of the lemma is concluded. \end{proof}

\subsection{Higher order chaotic projections}

In this subsection we want to investigate the behavior of higher order chaotic components. Let $q\ge 3$, from \eqref{CE_M}
we can write 
\begin{equation}\label{esprq}
\Var(\mathcal M_T(u)[q]) = \frac{\phi(u)^2H_{q-1}(u)^2}{q!} \int_{[0,T]^{2}}\int_{\mathbb{S}^{2}\times \mathbb{S}^{2}}\Gamma (\langle
x,y\rangle ,t-s)^{q}\,dx\,dy\,dt\,ds.
\end{equation}
Thanks to \eqref{eq2} we have that
\begin{eqnarray*}
&&\int_{[0,T]^{2}}\int_{\mathbb{S}^{2}\times \mathbb{S}^{2}}\Gamma (\langle
x,y\rangle ,t-s)^{q}\,dx\,dy\,dt\,ds \\
&=&\int_{[0,T]^{2}}\int_{\mathbb{S}^{2}\times \mathbb{S}^{2}}\left(
\sum_{\ell =0}^{\infty }\,C_{\ell }(t-s)\,\frac{(2\ell +1)}{4\pi }\,P_{\ell
}(\langle x,y\rangle )\right) ^{q}\,dx\,dy\,dt\,ds \\
&=&\sum_{\ell
_{1},\ell _{2},\dots ,\ell _{q}=0}^{\infty }\int_{[0,T]^{2}} \int_{\mathbb{S}^{2}\times \mathbb{S}^{2}} C_{\ell _{1}}(t-s)\,C_{\ell
_{2}}(t-s)\cdots C_{\ell _{q}}(t-s) \\
&&\times \frac{2\ell _{1}+1}{4\pi }\,P_{\ell _{1}}(\langle x,y\rangle )\,\frac{%
2\ell _{2}+1}{4\pi }P_{\ell _{2}}(\langle x,y\rangle )\cdots \frac{2\ell
_{q}+1}{4\pi }P_{\ell _{q}}(\langle x,y\rangle )\,dx\,dy\,dt\,ds\,.
\end{eqnarray*}%
Recall the addition formula for spherical harmonics \cite[(3.42)]{MaPeCUP}
$$
\frac{4\pi}{2\ell+1} \sum_{m=-\ell}^\ell Y_{\ell,m}(x) Y_{\ell,m}(y) = P_\ell(\langle x,y\rangle),\qquad x,y\in \mathbb S^2\,,
$$
and the definition of generalized Gaunt integral in \eqref{eq:gaunt} to write
\begin{eqnarray}\label{Gaunt+}
&&\int_{\mathbb S^2\times \mathbb S^2} \frac{2\ell _{1}+1}{4\pi }\,P_{\ell _{1}}(\langle x,y\rangle )\,\frac{%
2\ell _{2}+1}{4\pi }P_{\ell _{2}}(\langle x,y\rangle )\cdots \frac{2\ell
_{q}+1}{4\pi }P_{\ell _{q}}(\langle x,y\rangle )\,dxdy\cr
&&=\sum_{m_1=-\ell_1}^{\ell_1} \cdots \sum_{m_q =-\ell_q}^{\ell_q} \left ( \mathcal{G}_{\ell _{1}...\ell _{q}}^{m_1...m_q} \right )^2\,.
\end{eqnarray} 
Equivalently, 
\begin{eqnarray}\label{Gaunt2}
&&\int_{\mathbb S^2\times \mathbb S^2} \frac{2\ell _{1}+1}{4\pi }\,P_{\ell _{1}}(\langle x,y\rangle )\,\frac{%
2\ell _{2}+1}{4\pi }P_{\ell _{2}}(\langle x,y\rangle )\cdots \frac{2\ell
_{q}+1}{4\pi }P_{\ell _{q}}(\langle x,y\rangle )\,dxdy\cr
&&=4\pi \left ( \prod\limits_{i=1}^{q}\sqrt{\frac{2\ell _{i}+1}{4\pi }}\right )
\mathcal{G}_{\ell _{1}\ell _{2}...\ell _{q}}^{0\,...0}\,.
\end{eqnarray} 
In particular $\mathcal{G}_{\ell _{1}\dots \ell _{q}}^{0\dots 0}\ge 0$.
In order to check \eqref{Gaunt2} recall that
$$
Y_{\ell,0}(\theta_x,\varphi_x)=\sqrt{\frac{2\ell+1}{4\pi}}P_\ell(\cos \theta_x)\,,
$$
where $(\theta_x,\varphi_x)$ are the angular coordinates of the point $x\in\mathbb{S}^2$; then, letting $o$ be the \emph{north pole} of the sphere, we have 
\begin{eqnarray*}
&&\int_{\mathbb{S}^{2}\times \mathbb S^2}
\sqrt{\frac{2\ell _{1}+1}{4\pi }}P_{\ell _{1}}(\langle x,y\rangle )\cdots \sqrt{%
\frac{2\ell _{q}+1}{4\pi }}P_{\ell _{q}}(\langle x,y\rangle )dxdy\\
&&=4\pi \int_{\mathbb{S}^{2}}\sqrt{\frac{2\ell _{1}+1}{4\pi }}P_{\ell _{1}}(\langle x,o\rangle )\cdots \sqrt{%
\frac{2\ell _{q}+1}{4\pi }}P_{\ell _{q}}(\langle x,o\rangle )dx\\
&&=4\pi \int_{\mathbb{S}^{2}}\sqrt{\frac{2\ell _{1}+1}{4\pi }}P_{\ell _{1}}(\cos \theta_x )\cdots \sqrt{%
\frac{2\ell _{q}+1}{4\pi }}P_{\ell _{q}}(\cos \theta_x )dx=4\pi\, \mathcal{G}_{\ell _{1}\dots \ell _{q}}^{0\,...0}\,.
\end{eqnarray*}
As a consequence, from \eqref{esprq} we can write 
\begin{equation}\label{eq:varqth}
\Var(\mathcal{M}_T(u)[q]))=\frac{4\pi\,H_{q-1}(u)^2\phi(u)^2}{q!} \sum_{\ell
_{1},\dots ,\ell _{q}=0}^{\infty }k_{\ell _{1}\dots \ell
_{q}}(T)\left ( \prod\limits_{i=1}^{q}\sqrt{\frac{2\ell _{i}+1}{4\pi }}\right )
\mathcal{G}_{\ell _{1}\dots \ell _{q}}^{0\,...0}\,,
\end{equation}%
where%
\begin{equation}\label{defK}
k_{\ell _{1},\dots,\ell _{q}}(T):=\int_{[0,T]^{2}}C_{\ell _{1}}(t-s)\,C_{\ell _{2}}(t-s)\cdots C_{\ell_{q}}(t-s)dtds\,.
\end{equation}
Note that 
$$
k_{\ell_1, \dots, \ell_q}(T) = \mathbb E \left [ \left ( \int_{[0,T]} a_{\ell_1, 0}(t) \cdots a_{\ell_q,0}(t)\,dt   \right )^2 \right] .
$$
In order to study the asymptotic behavior, as $T\to +\infty$, of \eqref{eq:varqth} we will need the following result whose proof is given in \S \ref{secVarq}. 
\begin{lemma}\label{lemma-varqth}
Let $\ell_1,\dots,\ell_q$ be such that $\beta_{\ell_1}+\cdots+\beta_{\ell_q}<1$, then
\begin{equation}\label{eq1-lemma-varqth}
\lim_{T\rightarrow\infty}\frac{k_{\ell_1\dots\ell_q}(T)}{T^{2-(\beta_{\ell_1}+\cdots+\beta_{\ell_q})}}=\frac{C_{\ell _{1}}(0)\cdots C_{\ell_{q}}(0)}{(1-(\beta_{\ell_1}+\cdots+\beta_{\ell_q}))(2-(\beta_{\ell_1}+\cdots+\beta_{\ell_q}))}\,.
\end{equation}
and if $\beta_{\ell_1}+\cdots+\beta_{\ell_q}=1$
$$
\lim_{T\to\infty} \frac{k_{\ell_1\dots\ell_q}(T)}{T\log T} = 2 \,C_{\ell _{1}}(0)\cdots C_{\ell_{q}}(0)\,.
$$
On the contrary, let $\ell_1,\dots,\ell_q$ be such that $\beta_{\ell_1}+\cdots+\beta_{\ell_q}>1$, then 
\begin{equation}\label{eq2-lemma-varqth}
\lim_{T\to\infty} \frac{k_{\ell_1\dots\ell_q}(T)}{T}= \int_{-\infty}^{+\infty} C_{\ell _{1}}(\tau)\,C_{\ell _{2}}(\tau)\cdots C_{\ell_{q}}(\tau)\,d\tau\,.
\end{equation}
\end{lemma}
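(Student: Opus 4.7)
The plan is to generalize the proof of Lemma \ref{lemma-var2nd} from a square $C_\ell(\tau)^2$ to a product $\prod_{i=1}^q C_{\ell_i}(\tau)$. The key preliminary step is to apply the change of variables in Remark \ref{remComput} to reduce the double integral to
$$k_{\ell_1,\dots,\ell_q}(T) = 2T\int_0^T \Big(1 - \frac{\tau}{T}\Big) \prod_{i=1}^q C_{\ell_i}(\tau)\,d\tau,$$
and to observe that whenever $\beta := \beta_{\ell_1}+\cdots+\beta_{\ell_q} \leq 1$, none of the individual $\beta_{\ell_i}$ can equal $1$ (since each is strictly positive and the others are bounded below away from $0$ in the partial sum), so by \eqref{gbeta} we have $\prod_i g_{\beta_{\ell_i}}(\tau) = (1+\tau)^{-\beta}$ for $\tau \geq 0$.

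For the long-memory regime $\beta < 1$ in \eqref{eq1-lemma-varqth}, I would fix $\varepsilon > 0$ and choose $M > 0$ so that \eqref{sup} holds, then split the integral at $M$. The integral over $[0,M]$ contributes $O(T) = o(T^{2-\beta})$ by continuity of each $C_{\ell_i}$. On $[M,T]$ I would factor
$$\prod_{i=1}^q C_{\ell_i}(\tau) = \Big(\prod_{i=1}^q C_{\ell_i}(0)\Big)\cdot \Big(\prod_{i=1}^q \frac{G_{\ell_i}(\tau)}{C_{\ell_i}(0)}\Big)\cdot (1+\tau)^{-\beta},$$
where the middle factor is sandwiched between $(1-\varepsilon)^q$ and $(1+\varepsilon)^q$ by the choice of $M$. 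The resulting elementary integral $\int_M^T(1-\tau/T)(1+\tau)^{-\beta}\,d\tau$ is asymptotic to $T^{1-\beta}/[(1-\beta)(2-\beta)]$, obtained by explicit antidifferentiation exactly as in the proof of Lemma \ref{lemma-var2nd}. Passing $T\to\infty$ first and then $\varepsilon\to 0^+$ extracts the exact leading constant. The boundary case $\beta = 1$ is handled identically, except that the elementary integral $\int_M^T(1-\tau/T)(1+\tau)^{-1}\,d\tau$ is asymptotic to $\log T$, producing the announced $T\log T$ rate.

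For the short-memory regime $\beta > 1$ in \eqref{eq2-lemma-varqth}, I would avoid explicit integration and instead invoke dominated convergence. Condition \ref{basic2} provides the bound $|C_{\ell_i}(\tau)| \leq (1+\varepsilon) C_{\ell_i}(0) g_{\beta_{\ell_i}}(\tau)$ for $\tau > M$, while continuity handles $[0,M]$; the product $\prod_i C_{\ell_i}(\tau)$ is therefore dominated by an integrable function on $[0,\infty)$, since either $\beta > 1$ forces $(1+\tau)^{-\beta}$ to be integrable, or some $\beta_{\ell_i}=1$ contributes the stronger $(1+|\tau|)^{-\alpha}$ decay with $\alpha\geq 2$ from \eqref{gbeta}. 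Since $(1-\tau/T)\mathbf{1}_{[0,T]}(\tau) \to 1$ pointwise, dominated convergence gives $k_{\ell_1,\dots,\ell_q}(T)/T \to 2\int_0^\infty \prod_i C_{\ell_i}(\tau)\,d\tau = \int_{\mathbb R}\prod_i C_{\ell_i}(\tau)\,d\tau$ by the evenness of each $C_{\ell_i}$.

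The main technical burden is the careful bookkeeping of the perturbation $\prod_i(G_{\ell_i}(\tau)/C_{\ell_i}(0))$, which requires sending $\varepsilon \to 0^+$ only after the limit $T\to\infty$ to recover the precise leading constant rather than merely the rate. No genuinely new analytic ingredient is needed beyond those already deployed for $q=2$; the only structural novelty is that the effective memory exponent governing the rate is the \emph{sum} $\beta_{\ell_1}+\cdots+\beta_{\ell_q}$, coming from multiplying the tail power laws, and the trichotomy (sum less than, equal to, or greater than $1$) matches the integrability threshold for $(1+\tau)^{-\beta}$.
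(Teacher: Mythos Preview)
Your proposal is correct and follows essentially the same route as the paper: reduce via Remark~\ref{remComput}, split at $M$ using \eqref{sup}, and compare the product $\prod_i C_{\ell_i}(\tau)$ to $\big(\prod_i C_{\ell_i}(0)\big)(1+\tau)^{-\beta}$ on $[M,T]$. The only cosmetic difference is that the paper expands $\prod_i\big((G_{\ell_i}/C_{\ell_i}(0)-1)+1\big)$ multinomially and bounds the cross terms by $\sum_{k\ge 1}\binom{q}{k}\varepsilon^k$, whereas you use the equivalent sandwich $(1-\varepsilon)^q\le \prod_i G_{\ell_i}(\tau)/C_{\ell_i}(0)\le (1+\varepsilon)^q$; both extract the same leading constant after sending $T\to\infty$ then $\varepsilon\to 0$.
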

Recall \eqref{eq:varqth}. 
\begin{proposition}\label{prop-varqth}
Let $q\ge3$. If $q\beta_{\ell^\star} <1$ and $\beta_{\ell^\star}\le \beta_0$ then
\begin{flalign*}
&\lim_{T\rightarrow \infty }\frac{\Var\left(\mathcal{M}_T(u)[q]\right)}{T^{2-q\beta _{\ell^\star}}} =\frac{4\pi\,H_{q-1}(u)^2\phi(u)^2}{q!(1-q\beta_{\ell^\star})(2-q\beta_{\ell^\star})}\sum_{\ell
_{1},\ell _{2},\dots ,\ell _{q}\in \mathcal{I}^\star} \left ( \prod\limits_{i=1}^{q}\sqrt{\frac{2\ell _{i}+1}{4\pi }}C_{\ell_{i}}(0)\right )
\mathcal{G}_{\ell _{1}\dots \ell _{q}}^{0\,...0}\,;
\end{flalign*}
on the other hand, If $q\beta_{\ell^\star} <1$ and $\beta_{\ell^\star} > \beta_0$ then 
\begin{eqnarray*}
&&\lim_{T\rightarrow \infty }\frac{\Var\left(\mathcal{M}_T(u)[q]\right)}{T^{2-q\beta _0}} = \frac{H_{q-1}(u)^2\phi(u)^2}{(4\pi)^{q-2} q!} \frac{C_0(0)^q}{(1-q\beta_0)(2-q\beta_0)}\,.
\end{eqnarray*}
If $q\beta_{\ell^\star}=1$ and $\beta_{\ell^\star}\le \beta_0$ then 
$$
\lim_{T\to\infty} \frac{\Var\left(\mathcal{M}_T(u)[q]\right)}{T\log T} = \frac{8\pi\,H_{q-1}(u)^2\phi(u)^2}{q!}\sum_{\ell
_{1},\ell _{2},\dots ,\ell _{q}\in \mathcal{I}^\star }  \left ( \prod\limits_{i=1}^{q}\sqrt{\frac{2\ell _{i}+1}{4\pi }}C_{\ell_{i}}(0)\right )
\mathcal{G}_{\ell _{1} \dots \ell _{q}}^{0\,...0}\,;
$$
if $q\beta_{\ell^\star}=1$ and $\beta_{\ell^\star} > \beta_0$ then 
\begin{eqnarray*}
&&\lim_{T\rightarrow \infty }\frac{\Var\left(\mathcal{M}_T(u)[q]\right)}{T^{2-q\beta _0}} = \frac{H_{q-1}(u)^2\phi(u)^2}{(4\pi)^{q-2} q!} \frac{C_0(0)^q}{(1-q\beta_0)(2-q\beta_0)}\,.
\end{eqnarray*}
On the other hand, if $q\beta_{\ell^\star} >1$ and $q\beta_0 > 1$, then
$$
\lim_{T\to\infty}\frac{\Var\left(\mathcal{M}_T(u)[q]\right)}{T}=s^2_q\,,
$$
where 
$$
s^2_q :=  \frac{4\pi\,H_{q-1}(u)^2\phi(u)^2}{q!}\sum_{\ell
_{1},\ell _{2},\dots ,\ell _{q}=0}^{\infty }\mathcal{G}_{\ell _{1}...\ell _{q}}^{0\,...0} \int_{-\infty}^{+\infty} \left(\prod\limits_{i=1}^{q}\sqrt{\frac{2\ell _{i}+1}{4\pi }}C_{\ell _{i}}(\tau)\right )\,d\tau\,;
$$
moreover if $q\beta_{\ell^\star} >1$ and $q\beta_0 = 1$, then
\begin{eqnarray*}
&&\lim_{T\rightarrow \infty }\frac{\Var\left(\mathcal{M}_T(u)[q]\right)}{T\log T} = \frac{H_{q-1}(u)^2\phi(u)^2}{(4\pi)^{q-2} q!} 2C_0(0)^q;
\end{eqnarray*}
finally if $q\beta_{\ell^\star} >1$ and $q\beta_0 < 1$, then
\begin{eqnarray*}
&&\lim_{T\rightarrow \infty }\frac{\Var\left(\mathcal{M}_T(u)[q]\right)}{T^{2-q\beta_0}} =  \frac{H_{q-1}(u)^2\phi(u)^2}{(4\pi)^{q-2} q!} \frac{C_0(0)^q}{(1-q\beta_0)(2-q\beta_0)}\,.
\end{eqnarray*}
\end{proposition}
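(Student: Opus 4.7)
The plan is to follow the approach of Proposition \ref{prop-var2nd}: starting from the explicit series \eqref{eq:varqth}, I would rescale each summand by the conjectured rate, apply Lemma \ref{lemma-varqth} termwise, and exchange $\lim_{T\to\infty}$ with the infinite sum over $(\ell_1,\dots,\ell_q)$ via Dominated Convergence. The regime analysis is driven by the sign of $q\beta_{\ell^\star}-1$ (long versus short temporal memory of the $q$-th Hermite component) and, secondarily, by the sign of $\beta_{\ell^\star}-\beta_0$, which determines whether the dominating tuples lie in $(\mathcal{I}^\star)^q$ or reduce to the single tuple $(0,\dots,0)$.

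The termwise analysis is then direct. When $q\beta_{\ell^\star}\le 1$ and $\beta_{\ell^\star}\le\beta_0$, only tuples $(\ell_1,\dots,\ell_q)\in(\mathcal{I}^\star)^q$ satisfy $\sum_i\beta_{\ell_i}=q\beta_{\ell^\star}$; any other tuple has a strictly larger sum and is thus subleading by Lemma \ref{lemma-varqth}, so the leading term assembles into the stated sum over $(\mathcal{I}^\star)^q$. When $\beta_{\ell^\star}>\beta_0$, the only dominator is $(0,\dots,0)$; substituting $\mathcal{G}_{0\dots 0}^{0\dots 0}=(4\pi)^{1-q/2}$ (from $Y_{0,0}\equiv(4\pi)^{-1/2}$) and $\prod_i\sqrt{(2\ell_i+1)/(4\pi)}=(4\pi)^{-q/2}$ into the prefactor $4\pi H_{q-1}(u)^2\phi(u)^2/q!$ yields precisely $H_{q-1}(u)^2\phi(u)^2 C_0(0)^q/((4\pi)^{q-2}q!(1-q\beta_0)(2-q\beta_0))$. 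In the short-memory regime $q\beta_{\ell^\star}>1$ and $q\beta_0>1$, every tuple with $k$ zero indices has $\sum_i\beta_{\ell_i}\ge k\beta_0+(q-k)\beta_{\ell^\star}>1$, so \eqref{eq2-lemma-varqth} applies to every summand and the result aggregates into $s_q^2$. The boundary cases $q\beta_{\ell^\star}=1$ or $q\beta_0=1$ are handled identically using the logarithmic rate in Lemma \ref{lemma-varqth}.

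The main obstacle is producing a uniform majorant for the rescaled summands. This requires $q$-variate analogs of Lemmas \ref{tec2}--\ref{tec2_3}, derived by the same split $\int_0^T=\int_0^M+\int_M^T$: on $[0,M]$ one uses $|C_{\ell_i}(\tau)|\le C_{\ell_i}(0)$, while on $[M,T]$ one invokes Condition \ref{basic2} to replace $G_{\ell_i}(\tau)/C_{\ell_i}(0)$ by $1+O(\varepsilon)$ and bounds $\int_M^T\prod_i(1+\tau)^{-\beta_{\ell_i}}\,d\tau$ by an explicit primitive, distinguishing according to whether $\sum_i\beta_{\ell_i}$ is less than, equal to, or greater than $1$. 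The resulting estimate has the form $k_{\ell_1\dots\ell_q}(T)/T^{\mathrm{rate}}\le c\prod_i C_{\ell_i}(0)$ with $c$ independent of the tuple.

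Summability of this majorant against the weights $\prod_i\sqrt{(2\ell_i+1)/(4\pi)}\,\mathcal{G}_{\ell_1\dots\ell_q}^{0\dots 0}$ follows from the identity
\begin{equation*}
\sum_{\ell_1,\dots,\ell_q=0}^{\infty}\,\prod_{i=1}^q\sqrt{\frac{2\ell_i+1}{4\pi}}\,C_{\ell_i}(0)\,\mathcal{G}_{\ell_1\dots\ell_q}^{0\dots 0}=\int_{\mathbb{S}^2}\Gamma(\cos\theta_x,0)^q\,dx,
\end{equation*}
obtained by inserting the definition \eqref{eq:gaunt} of $\mathcal{G}$, interchanging sum and integral (justified by \eqref{eqConv}), and recognizing each inner $\ell$-sum as $\sum_\ell\frac{2\ell+1}{4\pi}C_\ell(0)P_\ell(\cos\theta_x)=\Gamma(\cos\theta_x,0)$ via the relation $\sqrt{(2\ell+1)/(4\pi)}\,Y_{\ell,0}(x)=\frac{2\ell+1}{4\pi}P_\ell(\cos\theta_x)$. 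The right-hand side is bounded by $4\pi$ since Cauchy-Schwarz yields $|\Gamma(\cdot,0)|\le\Gamma(1,0)=1$ by \eqref{somma1}. With pointwise convergence and a dominating summable majorant in hand, Dominated Convergence allows the interchange of $\lim_T$ and $\sum_{\ell_1,\dots,\ell_q}$, completing the proof.
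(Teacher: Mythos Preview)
Your proposal is correct and follows essentially the same route as the paper: both start from \eqref{eq:varqth}, split the multi-index set according to whether $(\ell_1,\dots,\ell_q)\in(\mathcal I^\star)^q$, apply Lemma \ref{lemma-varqth} termwise, and justify the limit--sum exchange by Dominated Convergence using uniform bounds on $k_{\ell_1\dots\ell_q}(T)$ of the form $c\prod_i C_{\ell_i}(0)$. The paper packages those bounds as Lemmas \ref{lemmaqq}, \ref{tecq_1}--\ref{tecq_3} and \ref{00}, which are exactly the ``$q$-variate analogs of Lemmas \ref{tec2}--\ref{tec2_3}'' you announce.

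The one genuine difference is how summability of the majorant series \eqref{serieqfin} is established. The paper invokes the pointwise Gaunt bound \eqref{gauntQ} (imported from \cite{MV16}) and then \eqref{eqConv}. You instead rewrite the whole series as $\int_{\mathbb S^2}\Gamma(\cos\theta_x,0)^q\,dx$ and bound it by $4\pi$ via $|\Gamma(\cdot,0)|\le 1$. Your argument is self-contained and arguably cleaner; the interchange you need is legitimate because the partial sums $\sum_{\ell\le L}\frac{2\ell+1}{4\pi}C_\ell(0)P_\ell(\cos\theta_x)$ converge uniformly by \eqref{eqConv}, so the box partial sums of the nonnegative multi-series converge to $\int_{\mathbb S^2}\Gamma^q$ and hence equal the unordered sum. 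The paper's route, in exchange, yields the explicit termwise control \eqref{gauntQ}, which is reused later (e.g.\ in the proof of Theorem \ref{uno}, see \eqref{termprinc}) to dominate sums over $q$ as well; your integral identity gives $\sum_{\ell_1,\dots,\ell_q}(\cdots)\le 4\pi$ uniformly in $q$, which would serve the same purpose there.
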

In order to prove Proposition \ref{prop-varqth} we will also need the following technical results, the proofs of Lemma \ref{lemmaqq} and Lemma \ref{tecq_1}  are postponed to the Appendix \S \ref{app}, 
the proofs of the remaining lemmas are very similar and we omit the details. 

\begin{lemma}\label{lemmaqq}
Let $\varepsilon, M>0$ be as in \eqref{sup}. 
If there is at least one index $j\in \lbrace 1,\dots,q\rbrace$ such that 
$\beta_{\ell_j}=1$ we have for $T>\max(1,M)$,
\begin{eqnarray*}
\frac{k_{\ell_1\dots \ell_q}(T)}{T} \le 2C_{\ell_1}(0)\dots C_{\ell_q}(0)\left ( M +  \frac{1}{q\min(\beta_0,\beta_{\ell^\star})} \left (  \frac{\varepsilon+1}{(1+M)^{\min(\beta_0,\beta_{\ell^\star})}}  \right)^q
\right ).
\end{eqnarray*}
\end{lemma}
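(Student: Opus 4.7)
The plan is to bound $k_{\ell_1,\dots,\ell_q}(T)$ directly via its integral representation, using the pointwise decay supplied by Condition \ref{basic2}. Exploiting the evenness of each $C_\ell$, the same change of variables as in Remark \ref{remComput} yields
\begin{equation*}
k_{\ell_1,\dots,\ell_q}(T) = 2T\int_0^T \left(1-\frac{\tau}{T}\right) C_{\ell_1}(\tau)\cdots C_{\ell_q}(\tau)\,d\tau\,,
\end{equation*}
so, after dividing by $T$ and discarding the factor $(1-\tau/T)\le 1$, I bound the resulting integral by splitting it at $\tau=M$.

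On $[0,M]$, the elementary bound $|C_{\ell_i}(\tau)|\le C_{\ell_i}(0)$ (Cauchy--Schwarz applied to the stationary covariance) yields a contribution at most $2M\prod_{i=1}^q C_{\ell_i}(0)$, which is the first summand inside the bracket of the claim. On $(M,T]$, \eqref{sup} gives $|C_{\ell_i}(\tau)|\le (\varepsilon+1)\,C_{\ell_i}(0)\,g_{\beta_{\ell_i}}(\tau)$, so the tail integrand is controlled by $(\varepsilon+1)^q \prod_i C_{\ell_i}(0)\cdot \prod_i g_{\beta_{\ell_i}}(\tau)$.

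The key step is to estimate $\prod_i g_{\beta_{\ell_i}}(\tau)$. By the very definition of $\beta_{\ell^\star}$, every $\beta_{\ell_i}$ satisfies $\beta_{\ell_i}\ge \min(\beta_0,\beta_{\ell^\star})$, while the hypothesis forces at least one index $j$ for which $g_{\beta_{\ell_j}}(\tau)=(1+|\tau|)^{-\alpha}$ with $\alpha\ge 2$. Writing each factor as a negative power of $(1+\tau)$ and summing exponents,
\begin{equation*}
\prod_{i=1}^q g_{\beta_{\ell_i}}(\tau) \le (1+\tau)^{-\alpha-(q-1)\min(\beta_0,\beta_{\ell^\star})} \le (1+\tau)^{-q\min(\beta_0,\beta_{\ell^\star})-1}\,,
\end{equation*}
where the final inequality uses $\alpha\ge 2$ together with $\min(\beta_0,\beta_{\ell^\star})\in (0,1]$.

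The tail is now integrable and a direct computation gives $\int_M^{+\infty}(1+\tau)^{-q\min(\beta_0,\beta_{\ell^\star})-1}\,d\tau = (1+M)^{-q\min(\beta_0,\beta_{\ell^\star})}/(q\min(\beta_0,\beta_{\ell^\star}))$, which assembles with the $[0,M]$ contribution into exactly the right-hand side of the claim. There is no real obstacle; the only subtle point is the exponent bookkeeping in the displayed inequality above, which converts the bare assumption "some $\beta_{\ell_j}=1$" into one extra unit of decay and hence produces the clean prefactor $1/(q\min(\beta_0,\beta_{\ell^\star}))$ rather than the $1/(q\min(\beta_0,\beta_{\ell^\star})-1)$ that a naive bound would give.
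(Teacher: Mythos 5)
Your proof is correct and follows essentially the same route as the paper: split the $\tau$-integral at $M$, bound the head by $\prod_i C_{\ell_i}(0)\cdot M$, use \eqref{sup} and the decay of $g_{\beta_\ell}$ on the tail, and exploit $\alpha\ge 2$ at an index with $\beta_{\ell_j}=1$ to gain the extra unit of decay that makes the tail integral convergent with the clean constant $1/(q\min(\beta_0,\beta_{\ell^\star}))$. The only (immaterial) difference is that you simplify the exponent to $-q\min(\beta_0,\beta_{\ell^\star})-1$ before integrating, whereas the paper integrates with the exponent $-\#U\alpha-(q-\#U)\min(\beta_0,\beta_{\ell^\star})$ and simplifies afterwards.
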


\begin{lemma}\label{tecq_1} Let $\varepsilon, M>0$ be as in \eqref{sup} and $q\beta_{\ell^\star} < 1$.
\begin{itemize} \item For $\ell_1, \dots, \ell_q \in \mathcal I^\star$,  $\beta_{\ell_j}<1$ for every $j$ and $T>\max(1,M)$ we have 
\begin{eqnarray*}
\frac{k_{\ell_1\dots\ell_q}(T)}{T^{2-q\beta_{\ell^\star}}}\le 2C_{\ell_1}(0) \cdots C_{\ell_q}(0) \left ( M + \frac{(\varepsilon +1)^q}{1-q\beta_{\ell^\star}} \left ( 1 + \frac1M \right )^{1-q\beta_{\ell^\star}}\right ).
\end{eqnarray*}
\item For $(\ell_1,\dots,\ell_q)\notin (\mathcal I^\star)^q$, $\ell_j\ge 1$, $\beta_{\ell_j}<1$ for every $j$ and $T>\max(1,M, T_m)$ 
\begin{eqnarray*}
\frac{k_{\ell_1\dots \ell_q}(T)}{T^{2-q\beta_{\ell^\star}}} &\le& 2C_{\ell_1}(0)\dots C_{\ell_q}(0)\Big (M\\
&&+  \frac{(\varepsilon+1)^q}{-(\beta_{\ell^{\star\star}} + (q-1) \beta_{\ell^\star})+1}\left (1 + \frac1M      \right )^{-(\beta_{\ell^{\star\star}} + (q-1) \beta_{\ell^\star})+1}\mathbf{1}_{\beta_{\ell^{\star\star}} + (q-1) \beta_{\ell^\star}<1}\\
&&+ 2(\varepsilon +1)^q m(\beta_{\ell^\star}) \mathbf{1}_{\beta_{\ell^{\star\star}} + (q-1) \beta_{\ell^\star}=1}\\
&&+ \frac{(\varepsilon +1)^q}{\beta_{\ell^{\star\star}} + (q-1) \beta_{\ell^\star}-1}\left (\frac{1}{M+1}   \right )^{\beta_{\ell^{\star\star}} + (q-1) \beta_{\ell^\star}-1}\mathbf{1}_{\beta_{\ell^{\star\star}} + (q-1) \beta_{\ell^\star}>1}\Big ).
\end{eqnarray*} 
\end{itemize}
\end{lemma}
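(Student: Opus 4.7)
\textbf{Proof proposal for Lemma \ref{tecq_1}.} The plan is to adapt the strategy of Lemma \ref{tec2_1} to $q$ factors; the substance is identical, only the bookkeeping changes. The key ingredients are: (a) the identity from Remark \ref{remComput}, applied to $\prod_j C_{\ell_j}$, which rewrites
\begin{equation*}
k_{\ell_1\dots\ell_q}(T) \;=\; 2T\int_0^T \left(1-\frac{\tau}{T}\right) C_{\ell_1}(\tau)\cdots C_{\ell_q}(\tau)\,d\tau;
\end{equation*}
(b) the uniform tail estimate \eqref{sup}, which combined with Condition \ref{basic2} gives, for $\tau > M$, the pointwise bound $|C_{\ell_j}(\tau)| \le (\varepsilon+1)C_{\ell_j}(0)(1+\tau)^{-\beta_{\ell_j}}$; (c) elementary closed-form estimates for $\int_M^T (1+\tau)^{-s}\,d\tau$ in the three regimes $s<1$, $s=1$, $s>1$.

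I first split the $\tau$-integral at $M$. On $[0,M]$, the trivial bound $|C_{\ell_j}(\tau)|\le C_{\ell_j}(0)$ together with $1-\tau/T\le 1$ produces at most $2TM\prod_j C_{\ell_j}(0)$; dividing by $T^{2-q\beta_{\ell^\star}}$ and using $q\beta_{\ell^\star}<1$ with $T\ge 1$, this contributes the $M$-term in the stated bound. On $[M,T]$, I drop $1-\tau/T\le 1$ and apply (b), obtaining
\begin{equation*}
\text{integrand on }[M,T]\;\le\;(\varepsilon+1)^q\Big(\prod_j C_{\ell_j}(0)\Big)(1+\tau)^{-\sum_j\beta_{\ell_j}}.
\end{equation*}

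For case (i) all $\ell_j\in\mathcal{I}^\star$, so the exponent equals $q\beta_{\ell^\star}\in(0,1)$ and (c) yields the integral $\le(1+T)^{1-q\beta_{\ell^\star}}/(1-q\beta_{\ell^\star})$. Multiplying by $2T^{q\beta_{\ell^\star}-1}$ telescopes the factor $T$ into $2(1+1/T)^{1-q\beta_{\ell^\star}}/(1-q\beta_{\ell^\star})$, which for $T\ge M$ is at most $2(1+1/M)^{1-q\beta_{\ell^\star}}/(1-q\beta_{\ell^\star})$, matching the stated expression. For case (ii), since at least one index $\ell_{j_0}\notin\mathcal{I}^\star$ obeys $\beta_{\ell_{j_0}}\ge\beta_{\ell^{\star\star}}$ while the remaining $q-1$ indices contribute at least $\beta_{\ell^\star}$ each, the exponent $\sum_j\beta_{\ell_j}$ is bounded below by $\alpha:=\beta_{\ell^{\star\star}}+(q-1)\beta_{\ell^\star}$, so $(1+\tau)^{-\sum_j\beta_{\ell_j}}\le(1+\tau)^{-\alpha}$. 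I now split into the three sub-cases and apply (c). When $\alpha<1$, the integral is $\le(1+T)^{1-\alpha}/(1-\alpha)$ and the residual power is $T^{q\beta_{\ell^\star}-\alpha}\le 1$ because $\alpha>q\beta_{\ell^\star}$; when $\alpha>1$, the integral is $\le(\alpha-1)^{-1}(1+M)^{1-\alpha}$ and $T^{q\beta_{\ell^\star}-1}\le 1$; when $\alpha=1$ the integral equals $\log((1+T)/(1+M))$, and the combination $T^{q\beta_{\ell^\star}-1}\log(1+T)$ is absorbed into the finite constant $m(\beta_{\ell^\star})$, understood here as $\max_{x>0}\log(1+x)/x^{1-q\beta_{\ell^\star}}$, which is finite because $q\beta_{\ell^\star}<1$; the hypothesis $T\ge T_m$ ensures the maximum is attained before $T$.

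The main obstacle is purely bookkeeping: tracking how the factor $2T\cdot T^{-(2-q\beta_{\ell^\star})}=2T^{q\beta_{\ell^\star}-1}$ combines with the Beta-type integral in each of the three $\alpha$-subcases, and verifying that the thresholds $T\ge\max(1,M)$ (resp.\ $T\ge\max(1,M,T_m)$ in the boundary case) are enough to make all monotonicity arguments go through. No new analytic phenomenon appears beyond those already handled in Lemma \ref{tec2_1}; the only structural novelty is the appearance of $\beta_{\ell^{\star\star}}+(q-1)\beta_{\ell^\star}$ (rather than $2\beta_{\ell^{\star\star}}$) as the critical exponent, reflecting the fact that for $q\ge 3$ only one factor is forced to carry the suboptimal decay $\beta_{\ell^{\star\star}}$ while the others may still decay at the fastest rate $\beta_{\ell^\star}$.
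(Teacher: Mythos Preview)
Your proposal is correct and follows essentially the same route as the paper's proof: rewrite $k_{\ell_1\dots\ell_q}(T)$ via Remark \ref{remComput}, split at $M$, control $[0,M]$ by the trivial bound $|C_{\ell_j}|\le C_{\ell_j}(0)$, apply the $\varepsilon$-tail estimate on $[M,T]$, and then treat the three sub-cases of the exponent $\alpha=\beta_{\ell^{\star\star}}+(q-1)\beta_{\ell^\star}$ separately. The only cosmetic difference is that in the boundary case $\alpha=1$ you (sensibly) reinterpret $m(\beta_{\ell^\star})$ with exponent $1-q\beta_{\ell^\star}$ rather than reusing the $1-2\beta_{\ell^\star}$ version from Lemma \ref{tec2_1}; this is harmless and arguably cleaner, since the bound $\log(1+T)/T^{1-q\beta_{\ell^\star}}\le \sup_{x>0}\log(1+x)/x^{1-q\beta_{\ell^\star}}$ holds for every $T>0$ anyway (so the $T_m$ threshold is not actually needed for this step).
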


\begin{lemma}\label{tecq_2}
Let $\varepsilon, M>0$ be as in \eqref{sup} and  $q\beta_{\ell^\star} =1$. 
\begin{itemize}
\item For $\ell_1, \dots, \ell_q \in \mathcal I^\star$, $\beta_{\ell_j}<1$ for every $j$ and  $T>\max(1,M,e)$ 
\begin{eqnarray*}
\frac{k_{\ell_1\dots \ell_q}(T)}{T\log T} \le 2C_{\ell_1}(0)\cdots C_{\ell_q}(0) ( M +\log(e+1)). 
\end{eqnarray*}
\item For $(\ell_1, \dots, \ell_q)\notin( \mathcal I^\star)^q$, $\ell_j\ge 1$, $\beta_{\ell_j}<1$ for every $j$ and  $T>\max(1,M,e)$
 \begin{eqnarray*}
\frac{k_{\ell_1\dots \ell_q}(T)}{T\log T} \le 2C_{\ell_1}(0)\cdots C_{\ell_q}(0)\left ( M +(\varepsilon +1)^q \int_{\mathbb R} (1 +|\tau|)^{-(\beta_{\ell^{\star\star}} + (q-1)\beta_{\ell^\star})}\,d\tau  \right ).
\end{eqnarray*} 
\end{itemize}
\end{lemma}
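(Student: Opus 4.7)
The plan is to mirror the proof of Lemma \ref{tec2_2}, generalizing from $q=2$ to arbitrary $q\ge 3$, with essentially the same splitting strategy. Starting from \eqref{defK}, the change of variables $\tau = t-s$ as in Remark \ref{remComput} yields
\begin{equation*}
k_{\ell_1\dots\ell_q}(T) = 2T\int_0^T \left(1 - \frac{\tau}{T}\right) C_{\ell_1}(\tau)\cdots C_{\ell_q}(\tau)\,d\tau,
\end{equation*}
so in particular $k_{\ell_1\dots\ell_q}(T) \le 2T\int_0^T |C_{\ell_1}(\tau)\cdots C_{\ell_q}(\tau)|\,d\tau$. First I would split the integral at $M$, as in \eqref{sup}. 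On $[0,M]$ the positive semidefiniteness bound $|C_{\ell_j}(\tau)| \le C_{\ell_j}(0)$ gives a contribution to $k_{\ell_1\dots\ell_q}(T)$ bounded by $2TM\,C_{\ell_1}(0)\cdots C_{\ell_q}(0)$. On $[M,T]$ I would use Condition \ref{basic2} together with \eqref{sup} to write
\begin{equation*}
|C_{\ell_j}(\tau)| = |G_{\ell_j}(\tau)|\,g_{\beta_{\ell_j}}(\tau) \le (\varepsilon+1)\,C_{\ell_j}(0)\,(1+\tau)^{-\beta_{\ell_j}},
\end{equation*}
which (since $\beta_{\ell_j}<1$ by assumption, so $g_{\beta_{\ell_j}}(\tau)=(1+\tau)^{-\beta_{\ell_j}}$) yields
\begin{equation*}
\int_M^T |C_{\ell_1}(\tau)\cdots C_{\ell_q}(\tau)|\,d\tau \le (\varepsilon+1)^q\,C_{\ell_1}(0)\cdots C_{\ell_q}(0)\int_M^T (1+\tau)^{-(\beta_{\ell_1}+\cdots+\beta_{\ell_q})}\,d\tau.
\end{equation*}

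In the first case $\ell_1,\dots,\ell_q \in \mathcal I^\star$, one has $\beta_{\ell_1}+\cdots+\beta_{\ell_q} = q\beta_{\ell^\star} = 1$, so the tail integral equals $\log(1+T) - \log(1+M) \le \log(1+T)$. Dividing the resulting bound for $k_{\ell_1\dots\ell_q}(T)$ by $T\log T$ and using $T \ge \max(1,M,e)$ makes $\log T \ge 1$, and moreover $T\mapsto \log(1+T)/\log T$ is decreasing on $[e,\infty)$ with maximum value $\log(1+e)$ at $T=e$; this yields the stated upper bound (up to absorbing $(\varepsilon+1)^q$ into the multiplicative constant by choosing $\varepsilon$ small). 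In the second case $(\ell_1,\dots,\ell_q) \notin (\mathcal I^\star)^q$ with $\ell_j \ge 1$, at least one index has $\beta_{\ell_j} \ge \beta_{\ell^{\star\star}}$ while the remaining satisfy $\beta_{\ell_i} \ge \beta_{\ell^\star}$, and since $\beta_{\ell^{\star\star}} > \beta_{\ell^\star}$ and $q\beta_{\ell^\star}=1$ we get
\begin{equation*}
\beta_{\ell_1}+\cdots+\beta_{\ell_q} \ge \beta_{\ell^{\star\star}} + (q-1)\beta_{\ell^\star} > q\beta_{\ell^\star} = 1.
\end{equation*}
Hence $(1+\tau)^{-(\beta_{\ell^{\star\star}}+(q-1)\beta_{\ell^\star})}$ is integrable on $\mathbb R_+$, and after symmetrization the tail integral is bounded by $\tfrac12\int_{\mathbb R}(1+|\tau|)^{-(\beta_{\ell^{\star\star}}+(q-1)\beta_{\ell^\star})}\,d\tau$ uniformly in $T$. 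Dividing by $T\log T$ and using $\log T \ge 1$ gives the claimed second inequality.

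The only place requiring any real care is controlling the $T$-dependent factor $\log(1+T)/\log T$ in the first case uniformly for $T\ge e$, which is an elementary monotonicity observation; beyond that, the argument is structurally identical to the proofs of Lemmas \ref{tec2} and \ref{tec2_1}, with the normalization $T\log T$ (instead of $T$ or $T^{2-q\beta_{\ell^\star}}$) ensuring both the $[0,M]$ and the $(1+\tau)^{-1}$-tail contributions collapse to $O(1)$. No additional probabilistic or analytic ingredients beyond Condition \ref{basic2} are needed.
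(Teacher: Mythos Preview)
Your approach is correct and is precisely the argument the paper has in mind: the paper explicitly omits the proof of Lemma~\ref{tecq_2}, stating only that it is ``very similar'' to those of Lemmas~\ref{lemmaqq} and~\ref{tecq_1}, and your splitting at $M$ followed by the tail estimate via Condition~\ref{basic2} and \eqref{sup} is exactly that template.

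One small point deserves clarification. In the first bullet your argument actually produces the bound $2C_{\ell_1}(0)\cdots C_{\ell_q}(0)\bigl(M + (\varepsilon+1)^q\log(e+1)\bigr)$, not the stated $M+\log(e+1)$. Your parenthetical about ``absorbing $(\varepsilon+1)^q$ by choosing $\varepsilon$ small'' is not quite right, since $\varepsilon$ and $M$ are fixed by \eqref{sup} in the hypothesis. The honest reading is that the stated constant in the lemma (and likewise in Lemma~\ref{tec2_2}) is missing an inessential factor $(\varepsilon+1)^q$; this is harmless because the lemma is only invoked to supply a dominating sequence for the Dominated Convergence Theorem in the proof of Proposition~\ref{prop-varqth}, where any constant independent of $T$ and of the $\ell_j$'s suffices. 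So your bound is the correct one and serves the intended purpose; just drop the remark about absorbing the factor and note instead that the extra $(\varepsilon+1)^q$ does not affect the application.
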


\begin{lemma}\label{tecq_3}
Let $\varepsilon, M>0$ be as in \eqref{sup} and  $q\beta_{\ell^\star} >1$. Then for $T>\max(1,M)$ 
\begin{eqnarray*}
\frac{ k_{\ell _{1}\ell _{2}...\ell_{q}}(T) }{T} \le 2 C_{\ell_1}(0)\cdots C_{\ell_q}(0)\left ( M + \frac{(1+M)}{q\beta_{\ell^\star}-1} \left ( \frac{1+\varepsilon}{(1+M)^{\beta_{\ell^\star}}}\right )^q\right )
\end{eqnarray*}
for any $\ell_1, \dots, \ell_q$ such that $\beta_{\ell_j}<1$, $\ell_j\ge 1$ for every $j$.
\end{lemma}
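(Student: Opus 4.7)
The plan is to mimic the reduction in Remark \ref{remComput}, split the single-variable integral at the threshold $M$ coming from \eqref{sup}, and control the tail by combining the factorization $C_\ell = G_\ell \cdot g_{\beta_\ell}$ from Condition \ref{basic2} with the hypothesis $q\beta_{\ell^\star}>1$.

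First, by the change of variables in Remark \ref{remComput} applied to the product $C_{\ell_1}(t-s)\cdots C_{\ell_q}(t-s)$, one has
\begin{equation*}
\frac{k_{\ell_1 \dots \ell_q}(T)}{T}= 2\int_0^T \Big(1-\tfrac{\tau}{T}\Big) C_{\ell_1}(\tau)\cdots C_{\ell_q}(\tau)\,d\tau\le 2\int_0^M \!|C_{\ell_1}(\tau)\cdots C_{\ell_q}(\tau)|\,d\tau+2\int_M^T \!|C_{\ell_1}(\tau)\cdots C_{\ell_q}(\tau)|\,d\tau.
\end{equation*}
On $[0,M]$ I use that each $C_{\ell_j}$ is a covariance function and hence $|C_{\ell_j}(\tau)|\le C_{\ell_j}(0)$, which directly yields the term $M\cdot C_{\ell_1}(0)\cdots C_{\ell_q}(0)$ in the stated bound.

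For the tail $[M,T]$ I use Condition \ref{basic2}, which gives $C_{\ell_j}(\tau)=G_{\ell_j}(\tau)g_{\beta_{\ell_j}}(\tau)$ with $|G_{\ell_j}(\tau)/C_{\ell_j}(0)-1|<\varepsilon$ for $\tau>M$ thanks to \eqref{sup}. Since $\beta_{\ell_j}<1$, we have $g_{\beta_{\ell_j}}(\tau)=(1+\tau)^{-\beta_{\ell_j}}$ on $[M,T]$, so
\begin{equation*}
|C_{\ell_1}(\tau)\cdots C_{\ell_q}(\tau)|\le (1+\varepsilon)^q\,C_{\ell_1}(0)\cdots C_{\ell_q}(0)\,(1+\tau)^{-(\beta_{\ell_1}+\cdots+\beta_{\ell_q})}.
\end{equation*}
The definition \eqref{def_star} of $\beta_{\ell^\star}$ together with the assumption $\ell_j\ge 1$ gives $\beta_{\ell_j}\ge \beta_{\ell^\star}$, whence $(1+\tau)^{-(\beta_{\ell_1}+\cdots+\beta_{\ell_q})}\le (1+\tau)^{-q\beta_{\ell^\star}}$.

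Finally, since $q\beta_{\ell^\star}>1$, an explicit integration yields
\begin{equation*}
\int_M^T (1+\tau)^{-q\beta_{\ell^\star}}\,d\tau=\frac{(1+M)^{1-q\beta_{\ell^\star}}-(1+T)^{1-q\beta_{\ell^\star}}}{q\beta_{\ell^\star}-1}\le \frac{(1+M)^{1-q\beta_{\ell^\star}}}{q\beta_{\ell^\star}-1}=\frac{(1+M)}{q\beta_{\ell^\star}-1}\left(\frac{1}{(1+M)^{\beta_{\ell^\star}}}\right)^{q}.
\end{equation*}
Combining the two pieces gives exactly the claimed inequality. No serious obstacle is expected: the only care needed is to make sure that $q\beta_{\ell^\star}>1$ (so that the tail integral converges and the sign in the closed form is correct) and that the bound $\beta_{\ell_j}\ge \beta_{\ell^\star}$ is applied to factors $(1+\tau)\ge 1$, which is ensured by the integration domain $[M,T]\subset[0,\infty)$.
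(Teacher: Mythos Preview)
Your proof is correct and follows exactly the approach the paper takes for the analogous Lemmas \ref{tec2}, \ref{tec2_1}, \ref{lemmaqq}, and \ref{tecq_1} (the paper itself omits the proof of Lemma \ref{tecq_3}, indicating it is ``very similar''). Your reduction via Remark \ref{remComput}, the split at $M$, the use of \eqref{sup} together with $\beta_{\ell_j}\ge\beta_{\ell^\star}$ for $\ell_j\ge1$, and the explicit integration of $(1+\tau)^{-q\beta_{\ell^\star}}$ match the pattern of \eqref{qq} and the displayed proofs in Appendix \S\ref{app} line by line.
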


\begin{lemma}\label{00}
Let $\varepsilon, M>0$ be as in \eqref{sup}, and set $U:=U(\ell_1,\dots,\ell_q)=\lbrace j \in \lbrace 1,\dots, q\rbrace : \ell_j = 0\rbrace$. If $\beta_{\ell^\star}\le \beta_0$
\begin{eqnarray*}
k_{\ell_1 \dots \ell_{q-\#U} 0\dots 0}(T) &\le& 2T C_{\ell_1}(0)\cdots C_{\ell_{q-\#U}}(0)C_0(0)^{\#U}\cr
&&\times \left ( M + (\varepsilon+1)^q\int_{[M,T]} (1 +|\tau|)^{-((q-1)\beta_{\ell^\star}+\beta_0)}\,d\tau\right ),
\end{eqnarray*}
otherwise if $\beta_{\ell^\star} > \beta_0$
\begin{flalign*}
&k_{\ell_1 \dots \ell_{q-\#U} 0\dots 0}(T) \le 2T C_{\ell_1}(0)\cdots C_{\ell_{q-\#U}}(0)C_0(0)^{\#U}\left ( M + (\varepsilon+1)^q\int_{[M,T]} (1 +|\tau|)^{-q\beta_0}\,d\tau\right ).
\end{flalign*}
\end{lemma}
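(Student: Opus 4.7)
The plan is to mimic the strategy used in the proofs of Lemmas \ref{lemmaqq}--\ref{tecq_3}, where the argument was carried out under the hypothesis that every $\ell_j$ is strictly positive; here we simply need to keep careful track of the extra copies of $\beta_0$ that appear in the sum of exponents when some of the $\ell_j$ vanish. First I would rewrite the integral defining $k_{\ell_1\dots\ell_q}(T)$ in \eqref{defK} via the standard change of variable in Remark \ref{remComput} as
\begin{equation*}
k_{\ell_1\dots\ell_q}(T)=2T\int_0^T\!\Bigl(1-\tfrac{\tau}{T}\Bigr)\,C_{\ell_1}(\tau)\cdots C_{\ell_{q-\#U}}(\tau)\,C_0(\tau)^{\#U}\,d\tau,
\end{equation*}
and then split the domain of integration at the threshold $M$ coming from \eqref{sup}.

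On $[0,M]$ I would simply use that $|C_\ell(\tau)|\le C_\ell(0)$ for every $\ell$ (since $C_\ell$ is a covariance) together with $(1-\tau/T)\le 1$, producing the additive $M$-term in the bound. On $[M,T]$ I would invoke Condition \ref{basic2} together with \eqref{sup} to estimate, for each index $j$,
\begin{equation*}
C_{\ell_j}(\tau)=G_{\ell_j}(\tau)\,g_{\beta_{\ell_j}}(\tau)\le C_{\ell_j}(0)(1+\varepsilon)(1+|\tau|)^{-\beta_{\ell_j}},
\end{equation*}
and analogously $C_0(\tau)\le C_0(0)(1+\varepsilon)(1+|\tau|)^{-\beta_0}$. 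Multiplying the $q$ factors yields $(\varepsilon+1)^q\prod C_{\ell_j}(0)\,C_0(0)^{\#U}$ times $(1+|\tau|)^{-S}$, where $S=\sum_{j=1}^{q-\#U}\beta_{\ell_j}+\#U\cdot\beta_0$ is the sum of exponents.

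The only point that requires an elementary case distinction is the comparison of $S$ with the exponent displayed in the statement. By definition of $\beta_{\ell^\star}$ in \eqref{def_star}, every nonzero $\ell_j$ satisfies $\beta_{\ell_j}\ge \beta_{\ell^\star}$. If $\beta_{\ell^\star}\le \beta_0$, then replacing each $\beta_{\ell_j}$ by $\beta_{\ell^\star}$ and each remaining occurrence of $\beta_0$ by $\beta_{\ell^\star}$ (except for one which we keep, using $\#U\ge 1$ when at least one zero index is present, or if $\#U=0$ then the estimate is trivially implied by the previous lemmas; in any case one can always keep the single copy of $\beta_0$) gives $S\ge (q-1)\beta_{\ell^\star}+\beta_0$, hence $(1+|\tau|)^{-S}\le (1+|\tau|)^{-((q-1)\beta_{\ell^\star}+\beta_0)}$. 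If instead $\beta_{\ell^\star}>\beta_0$, then every $\beta_{\ell_j}\ge\beta_{\ell^\star}>\beta_0$ and trivially $S\ge q\beta_0$, yielding the other bound.

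Putting the two pieces together and factoring out $2T\prod C_{\ell_j}(0)\,C_0(0)^{\#U}$ produces exactly the claimed inequalities. The calculation is essentially routine once the exponent bookkeeping is fixed; the main potential pitfall is confirming that $(q-1)\beta_{\ell^\star}+\beta_0$ is indeed the correct lower bound for $S$ across all admissible values of $\#U\in\{0,1,\dots,q\}$, which is precisely why the statement separates the two regimes $\beta_{\ell^\star}\lessgtr\beta_0$.
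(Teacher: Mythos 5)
Your proof is correct and follows exactly the route the paper intends (the paper omits this proof as being analogous to Lemmas \ref{lemmaqq} and \ref{tecq_1}): the change of variable of Remark \ref{remComput}, the split of the $\tau$-integral at $M$, the bound $|C_{\ell_j}(\tau)|\le C_{\ell_j}(0)(1+\varepsilon)(1+|\tau|)^{-\beta_{\ell_j}}$ on $[M,T]$ from Condition \ref{basic2} and \eqref{sup}, and the comparison of the summed exponent $S$ with $(q-1)\beta_{\ell^\star}+\beta_0$ or $q\beta_0$. The only point to tidy is your parenthetical about $\#U=0$: in the regime $\beta_{\ell^\star}<\beta_0$ the first displayed bound genuinely needs $\#U\ge 1$ (one kept copy of $\beta_0$), and this is automatic because the lemma is only invoked for multi-indices in the set $J$ of Proposition \ref{prop-varqth}, i.e.\ with at least one $\ell_j=0$, so the fallback to ``previous lemmas'' should simply be dropped.
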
 
We are now in the position to prove Proposition \ref{prop-varqth}. 

\begin{proof}[Proof of Proposition \ref{prop-varqth}]
Note first that
\begin{equation}\label{serieqfin} 
\sum_{\ell
_{1},\dots ,\ell _{q}=0}^{\infty }\left ( \prod\limits_{i=1}^{q}\sqrt{\frac{2\ell _{i}+1}{4\pi }}C_{\ell_i}(0)\right )
\mathcal{G}_{\ell _{1}\dots \ell _{q}}^{0\,...0} < +\infty\,,
\end{equation}
since the following estimate (see \cite[\S 4.2.1]{MV16}) 
\begin{equation}\label{gauntQ}
 \mathcal{G}_{\ell _{1}...\ell _{q}}^{0\,...0} \le \sqrt{\frac{(2\ell_1+1)(2\ell_2+1)\cdots (2\ell_{q-1}+1)}{(4\pi)^{q-2}(2\ell_q + 1)}}
 \end{equation}
and \eqref{eqConv} hold. 
Now, assume $q\beta_{\ell^\star} <1$ and $\beta_{\ell^\star}\le \beta_0$ and recall equation \eqref{eq:varqth}. Let $J:=\lbrace (\ell_1,\dots, \ell_q) \text{: there is at least one index } j\in\lbrace 1,\dots q\rbrace \text{ such that } \ell_j=0\rbrace$. Then, thanks to Lemma \ref{00}, we can apply Dominated Convergence Theorem and then Lemma \ref{lemma-varqth} to get
\begin{eqnarray}
&&\lim_{T\to\infty} \sum_{(\ell_1,\dots\ell_q)\in J} \frac{k_{\ell _{1}\dots \ell_{q}}(T)}{T^{2-q\beta_{\ell^\star}}}
\left ( \prod\limits_{i=1}^{q}\sqrt{\frac{2\ell _{i}+1}{4\pi }}\right )
\mathcal{G}_{\ell _{1}\ell _{2}...\ell _{q}}^{0\,...0} \cr
&&= \begin{cases}
0 \  &\text{if } \beta_{\ell^\star} < \beta_0\,,\cr
\sum_{(\ell_1,\dots,\ell_q)\in ( \mathcal I^\star)^q\cap J }  \frac{C_{\ell_1}(0)\cdots C_{\ell_q}(0)}{(1-q\beta_{\ell^\star})(2-q\beta_{\ell^\star})}
\left ( \prod\limits_{i=1}^{q}\sqrt{\frac{2\ell _{i}+1}{4\pi }}\right )
\mathcal{G}_{\ell _{1}\dots \ell _{q}}^{0\,...0}
\  &\text{if } \beta_{\ell^\star} = \beta_0\,.
\end{cases} 
\end{eqnarray}
Now, thanks to Lemma \ref{lemmaqq}, Lemma \ref{tecq_1}, Lemma \ref{00} and \eqref{serieqfin} 
 together with Lemma \ref{lemma-varqth} we have
\begin{eqnarray}\label{scambio1}
&&\lim_{T\to\infty} \sum_{(\ell_1,\dots,\ell_q)\in (\mathcal{I}^\star)^q}\frac{k_{\ell _{1}\ell _{2}...\ell_{q}}(T)}{T^{2-q\beta_{\ell^\star}}}
\left ( \prod\limits_{i=1}^{q}\sqrt{\frac{2\ell _{i}+1}{4\pi }}\right )
\mathcal{G}_{\ell _{1}\ell _{2}...\ell _{q}}^{0\,...0}\notag \\
&&=\sum_{(\ell_1,\dots,\ell_q)\in (\mathcal{I}^\star)^q}\lim_{T\rightarrow\infty} \frac{k_{\ell _{1}\ell _{2}...\ell_{q}}(T)}{T^{2-q\beta_{\ell^\star}}}
\left ( \prod\limits_{i=1}^{q}\sqrt{\frac{2\ell _{i}+1}{4\pi }}\right )
\mathcal{G}_{\ell _{1}\ell _{2}...\ell _{q}}^{0\,...0}\notag\\
&&=\sum_{(\ell_1,\dots,\ell_q)\in (\mathcal{I}^\star)^q}  \frac{C_{\ell_1}(0)\cdots C_{\ell_q}(0)}{(1-q\beta_{\ell^\star})(2-q\beta_{\ell^\star})}
\left ( \prod\limits_{i=1}^{q}\sqrt{\frac{2\ell _{i}+1}{4\pi }}\right )
\mathcal{G}_{\ell _{1}\ell _{2}...\ell _{q}}^{0\,...0}.
\end{eqnarray} 
Analogously
\begin{eqnarray}\label{scambio2}
&&\lim_{T\to\infty} \sum_{(\ell_1,\dots,\ell_q)\notin (\mathcal{I}^\star)^q}\frac{k_{\ell _{1}\ell _{2}...\ell_{q}}(T)}{T^{2-(\beta_{\ell_1}+\cdots+\beta_{\ell_q})}}
\left ( \prod\limits_{i=1}^{q}\sqrt{\frac{2\ell _{i}+1}{4\pi }}\right )
\mathcal{G}_{\ell _{1}\ell _{2}...\ell _{q}}^{0\,...0} =0\,.
\end{eqnarray} 
Le us check \eqref{scambio2}. We have 
\begin{eqnarray*}
&&\lim_{T\rightarrow\infty}\sum_{\substack{(\ell_{1},\ell _{2},\dots ,\ell _{q})\notin \mathcal{I}^\star:\\ \beta_{\ell_1}+\cdots+\beta_{\ell_q}<1, \ell_j\ge 1} }\frac{k_{\ell _{1}\ell _{2}...\ell_{q}}(T)}{T^{2-q\beta_{\ell^\star}}} \left (\prod\limits_{i=1}^{q}\sqrt{\frac{2\ell _{i}+1}{4\pi }}\right) \mathcal{G}_{\ell _{1}\ell _{2}...\ell _{q}}^{0\,...0}\\
&&=\sum_{\substack{(\ell_{1},\ell _{2},\dots ,\ell _{q})\notin \mathcal{I}^\star:\\ \beta_{\ell_1}+\cdots+\beta_{\ell_q}<1, \ell_j\ge 1}}\lim_{T\rightarrow\infty}\underbrace{\frac{k_{\ell _{1}\ell _{2}...\ell_{q}}(T)}{T^{2-(\beta_{\ell_1}+\cdots+\beta_{\ell_q})}}}_{\rightarrow \frac{C_{\ell_1}(0)\cdots C_{\ell_q}(0)}{(1-(\beta_{\ell_1}+\cdots+\beta_{\ell_q}))(2-(\beta_{\ell_1}+\cdots+\beta_{\ell_q}))}} \cr
&&\times \underbrace{\frac{T^{2-(\beta_{\ell_1}+\cdots+\beta_{\ell_q})}}{T^{2-q\beta_{\ell^\star}}}}_{\rightarrow0} \left (\prod\limits_{i=1}^{q}\sqrt{\frac{2\ell _{i}+1}{4\pi }}\right ) \mathcal{G}_{\ell _{1}\ell _{2}...\ell _{q}}^{0\,...0}=0\,.
\end{eqnarray*}
Analogously 
\begin{eqnarray*}
&&\lim_{T\rightarrow\infty}\sum_{\substack{(\ell_{1},\ell _{2},\dots ,\ell _{q})\notin \mathcal{I}^\star:\\ \beta_{\ell_1}+\cdots+\beta_{\ell_q}=1, \ell_j\ge 1}}\frac{k_{\ell _{1}\ell _{2}...\ell_{q}}(T)}{T^{2-q\beta_{\ell^\star}}} \left (\prod\limits_{i=1}^{q}\sqrt{\frac{2\ell _{i}+1}{4\pi }}\right) \mathcal{G}_{\ell _{1}\ell _{2}...\ell _{q}}^{0\,...0}=0
\end{eqnarray*}
and finally 
\begin{eqnarray*}
&&\lim_{T\rightarrow\infty}\sum_{\substack{(\ell_{1},\ell _{2},\dots ,\ell _{q})\notin \mathcal{I}^\star:\\ \beta_{\ell_1}+\cdots+\beta_{\ell_q}>1, \ell_j\ge 1}}\frac{k_{\ell _{1}\ell _{2}...\ell_{q}}(T)}{T^{2-q\beta_{\ell^\star}}} \left (\prod\limits_{i=1}^{q}\sqrt{\frac{2\ell _{i}+1}{4\pi }}\right) \mathcal{G}_{\ell _{1}\ell _{2}...\ell _{q}}^{0\,...0}=0\,,
\end{eqnarray*}
so that \eqref{scambio2} is proved.  

On the other hand, if we assume $q\beta_0>1$ and $q\beta_{\ell^\star} >1$, then obviously $q\beta_{\ell}>1$ for all $\ell\in \widetilde{\mathbb{N}}$, and $\beta_{\ell_1}+\cdots+\beta_{\ell_q}>1$ for all $\ell_1,\dots,\ell_q \in \widetilde{\mathbb{N}}$. 
Then, thanks to Lemma \ref{lemma-varqth}, Lemma \ref{lemmaqq}, Lemma \ref{tecq_3} and Lemma \ref{00},
\begin{eqnarray*}
\lim_{T\to\infty} \frac{\Var(\mathcal{M}_T(u)[q]))}{T} &=& 4\pi \sum_{\ell
_{1},\ell _{2},\dots ,\ell _{q}=0}^{\infty } \lim_{T\to\infty} \frac{k_{\ell _{1}\ell _{2}...\ell
_{q}}(T)}{T} \left ( \prod\limits_{i=1}^{q}\sqrt{\frac{2\ell _{i}+1}{4\pi }}\right )
\mathcal{G}_{\ell _{1}\ell _{2}...\ell _{q}}^{0\,...0}\\
&=& 4\pi \sum_{\ell
_{1},\ell _{2},\dots ,\ell _{q}=0}^{\infty }\mathcal{G}_{\ell _{1}...\ell _{q}}^{0\,...0} \int_{-\infty}^{+\infty}\left ( \prod\limits_{i=1}^{q}\sqrt{\frac{2\ell _{i}+1}{4\pi }}C_{\ell _{i}}(\tau)\right )\,d\tau \,,
\end{eqnarray*}
 which concludes the proof. 
 In particular, we have proved that the series on the right hand side of the previous formula converges. The remaining cases can be treated analogously. 
 \end{proof}

\subsubsection{Proof of Lemma \ref{lemma-varqth}} \label{secVarq}

\begin{proof} This proof is similar to the one of Lemma \ref{lemma-var2nd}.
Consider $\varepsilon, M>0$
as in \eqref{sup}. 
Then, using Remark \ref{remComput}, we have 
\begin{flalign}
&k_{\ell _{1}\ell _{2}...\ell _{q}}(T)=\int_{[0,T]^{2}}C_{\ell _{1}}(t-s)\,C_{\ell _{2}}(t-s)\cdots C_{\ell_{q}}(t-s)dtds\notag\\
&=2T\int_{0}^{M}\Big(1-\frac{\tau}{T}\Big)C_{\ell _{1}}(\tau) C_{\ell _{2}}(\tau)\cdots C_{\ell_{q}}(\tau)d\tau+2T\int_{M}^{T}\Big(1-\frac{\tau}{T}\Big)C_{\ell _{1}}(\tau) C_{\ell _{2}}(\tau)\cdots C_{\ell_{q}}(\tau)d\tau\,.\qquad\qquad \label{eq3-lemma-varqth}
\end{flalign}
Now assume that $\beta_{\ell_1}+\cdots+\beta_{\ell_q}<1$. For the first summand on the right hand side of \eqref{eq3-lemma-varqth}
we have 
\begin{flalign*}
&\lim_{T\rightarrow\infty}\frac{2}{T^{1-(\beta_{\ell_1}+\cdots+\beta_{\ell_q})}}\int_{0}^{M}\left(1-\frac{\tau}{T}\right)|C_{\ell _{1}}(\tau)\,C_{\ell _{2}}(\tau)\cdots C_{\ell_{q}}(\tau)|d\tau \le \lim_{T\to\infty} 2\frac{C_{\ell _{1}}(0)\cdots C_{\ell_{q}}(0)}{T^{1-(\beta_{\ell_1}+\cdots+\beta_{\ell_q})}} M = 0\,.
\end{flalign*} 
For the second summand on the right hand side of \eqref{eq3-lemma-varqth} we write 
\begin{eqnarray}
&&\int_{M}^{T}\left(1-\frac{\tau}{T}\right)C_{\ell _{1}}(\tau)\,C_{\ell _{2}}(\tau)\cdots C_{\ell_{q}}(\tau)d\tau\notag\\
&&=C_{\ell _{1}}(0)\cdots C_{\ell_{q}}(0) \int_{M}^{T}\left(1-\frac{\tau}{T}\right)(1+\tau)^{-(\beta_{\ell_1}+\cdots+\beta_{\ell_q})}\,\times\notag\\
&&\qquad \times\sum_{k=1}^q\sum_{\substack{k_1+\cdots+k_q=k\\k_1,\dots,k_q\in\{0,1\}}}\left(\frac{G_{\ell _{1}}(\tau)}{C_{\ell _{1}}(0)}-1\right)^{k_1}\cdots \left(\frac{G_{\ell _{q}}(\tau)}{C_{\ell _{q}}(0)}-1\right)^{k_q}d\tau\notag\\
&&\qquad+C_{\ell _{1}}(0)\cdots C_{\ell_{q}}(0) \int_{M}^{T}\left(1-\frac{\tau}{T}\right)(1+\tau)^{-(\beta_{\ell_1}+\cdots+\beta_{\ell_q})}d\tau\,. \label{ciaoq}
\end{eqnarray}
For the first term on the right hand side of the previous equality it holds that
\begin{eqnarray}\label{qaus1}
&&\lim_{T\rightarrow\infty}\frac{C_{\ell _{1}}(0)\cdots C_{\ell_{q}}(0)}{T^{1-(\beta_{\ell_1}+\cdots+\beta_{\ell_q})}} \int_{M}^{T}\left(1-\frac{\tau}{T}\right)(1+\tau)^{-(\beta_{\ell_1}+\cdots+\beta_{\ell_q})}\,\times \notag \\
&&\qquad\times\sum_{k=1}^q\sum_{\substack{k_1+\cdots+k_q=k\\k_1,\dots,k_q\in\{0,1\}}}\left(\frac{G_{\ell _{1}}(\tau)}{C_{\ell _{1}}(0)}-1\right)^{k_1}\cdots \left(\frac{G_{\ell _{q}}(\tau)}{C_{\ell _{q}}(0)}-1\right)^{k_q}d\tau=0\,.
\end{eqnarray}
Let us prove \eqref{qaus1}. Actually, for $\tau >M$ we have 
$$
\left|\sum_{k=1}^q\sum_{\substack{k_1+\cdots+k_q=k\\k_1,\dots,k_q\in\{0,1\}}}\left(\frac{G_{\ell _{1}}(\tau)}{C_{\ell _{1}}(0)}-1\right)^{k_1}\cdots \left(\frac{G_{\ell _{q}}(\tau)}{C_{\ell _{q}}(0)}-1\right)^{k_q}\right| \le \sum_{k=1}^q  {q\choose k}  \varepsilon^k\,,
$$
and \eqref{qaus1} follows, $\varepsilon$ being arbitrary.
On the other hand, for the second summand on the right hand side of \eqref{ciaoq},
\begin{eqnarray*}
&&\lim_{T\rightarrow\infty}\frac{C_{\ell _{1}}(0)\cdots C_{\ell_{q}}(0) }{T^{1-(\beta_{\ell_1}+\cdots+\beta_{\ell_q})}}  \int_{M}^{T}\left(1-\frac{\tau}{T}\right)(1+\tau)^{-(\beta_{\ell_1}+\cdots+\beta_{\ell_q})}d\tau\\
&&=\frac{C_{\ell _{1}}(0)\cdots C_{\ell_{q}}(0)}{(1-(\beta_{\ell_1}+\cdots+\beta_{\ell_q}))(2-(\beta_{\ell_1}+\cdots+\beta_{\ell_q}))}\,.
\end{eqnarray*} 
Analogously, if $\beta_{\ell_1}+\cdots+\beta_{\ell_q}=1$
$$
\lim_{T\to\infty} \frac{k_{\ell_1\dots\ell_q}(T)}{T\log T} = 2 \,C_{\ell _{1}}(0)\cdots C_{\ell_{q}}(0)\,.
$$
Otherwise, if $\beta_{\ell_1}+\cdots+\beta_{\ell_q}>1$, it immediately follows from equation \eqref{eq3-lemma-varqth} that, as $T\to +\infty$, 
$$
k_{\ell_1\dots\ell_q}(T)= 2T\int_0^{+\infty} C_{\ell _{1}}(\tau)\,C_{\ell _{2}}(\tau)\cdots C_{\ell_{q}}(\tau)\,d\tau + O(1)\,.
$$
Note  that the limiting  constant 
$$
\int_{\mathbb R} C_{\ell _{1}}(\tau)\,C_{\ell _{2}}(\tau)\cdots C_{\ell_{q}}(\tau)\,d\tau
$$
 in \eqref{eq2-lemma-varqth} is finite (see the proof of Proposition \ref{prop-varqth}).
 \end{proof}

\section{Proofs of the main results}

\subsection{Proof of Theorem \ref{uno}}

\begin{proof}
Recall \eqref{var_M}. Assume first that $u\ne 0$ and $\beta _{0}<\min (2\beta _{\ell ^{\star }},1)$. For the first chaotic projection, since $\beta_0<1$, from Lemma \ref{lem1} we have 
\begin{equation}\label{lim1}
\lim_{T\to\infty} \frac{\Var(\mathcal M_T(u)[1])}{T^{2-\beta_0}} = \frac{2\phi(u)^2 C_0(0)}{(1-\beta_0 )(2-\beta_0 )}\,.
\end{equation}
Let $Q\in \lbrace 2,3,\dots \rbrace$ be such that 
$$
Q\beta_{\ell^\star} >1\,.
$$
For $q\in \lbrace 2,3,\dots, Q-1\rbrace$ we have, from Proposition \ref{prop-var2nd} and Proposition \ref{prop-varqth},
since $\beta_0 < 2\beta_{\ell^\star}$, 
\begin{equation}\label{lim2}
\lim_{T\rightarrow \infty}\sum_{q=2}^{Q-1} \frac{\Var\left(\mathcal{M}_T(u)[q]\right)}{T^{2-\beta_{0}}} =\sum_{q=2}^{Q-1}  \lim_{T\rightarrow \infty}\frac{\Var\left(\mathcal{M}_T(u)[q]\right)}{T^{2-\beta_{0}}}=0\,.
\end{equation}
Let us now prove that 
\begin{eqnarray}\label{eq1-proofthm1}
\lim_{T\rightarrow \infty}\sum_{q = Q}^{+\infty} \frac{\Var\left(\mathcal{M}_T(u)[q]\right)}{T^{2-\beta_{0}}} = 0\,.
\end{eqnarray}
Recall \eqref{eq:varqth}; thanks to \eqref{gauntQ} we can write for any $\ell_1,\dots, \ell_q\ge 0$ 
\begin{flalign*}
&\frac{4\pi\,H_{q-1}(u)^2\phi(u)^2}{q!}\left ( \prod\limits_{i=1}^{q}\sqrt{\frac{2\ell _{i}+1}{4\pi }}\right) \mathcal{G}_{\ell _{1}\dots \ell _{q}}^{0\,...0}\\
&\le \frac{(4\pi)^2 H_{q-1}(u)^2\phi(u)^2}{q!} \left ( \prod_{i=1}^{q} \frac{2\ell_i+1}{4\pi} \right )=: b_q(\ell_1,\dots,\ell_q;u). 
\end{flalign*}
For $q\ge Q$ we have of course $q\beta_{\ell^\star}>1$. Let $\varepsilon, M>0$ be as in \eqref{sup}. From Lemma \ref{tecq_3} we have for $T>\max(1,M)$ 
\begin{flalign}
&\sum_{\substack{\ell_{1},\ell _{2},\dots ,\ell _{q}\ge 1\\ \beta_{\ell_1},\dots,\beta_{\ell_q}<1} }b_q(\ell_1,\dots,\ell_q;u)\frac{k_{\ell_1,\dots,\ell_q}(T)}{T^{2-\beta_0}} \le\sum_{\substack{\ell_{1},\ell _{2},\dots ,\ell _{q}\ge 1\\ \beta_{\ell_1},\dots,\beta_{\ell_q}<1} }b_q(\ell_1,\dots,\ell_q;u) \frac{k_{\ell_1,\dots,\ell_q}(T)}{T} \cr
&\le 2\sum_{\substack{\ell_{1},\ell _{2},\dots ,\ell _{q}\ge 1\\ \beta_{\ell_1},\dots,\beta_{\ell_q}<1} }b_q(\ell_1,\dots,\ell_q;u) C_{\ell_1}(0)\cdots C_{\ell_q}(0)\left ( M + \frac{(1+M)}{q\beta_{\ell^\star}-1} \left ( \frac{1+\varepsilon}{(1+M)^{\beta_{\ell^\star}}}\right )^q\right )\cr
&\le 2\sum_{\ell_{1},\ell _{2},\dots ,\ell _{q}\ge 0 }b_q(\ell_1,\dots,\ell_q;u) C_{\ell_1}(0)\cdots C_{\ell_q}(0)\left ( M + \frac{(1+M)}{q\beta_{\ell^\star}-1} \left ( \frac{1+\varepsilon}{(1+M)^{\beta_{\ell^\star}}}\right )^q\right )\cr
&= 2 \frac{(4\pi)^2 H_{q-1}(u)^2\phi(u)^2}{q!}\left ( M + \frac{(1+M)}{q\beta_{\ell^\star}-1} \left ( \frac{1+\varepsilon}{(1+M)^{\beta_{\ell^\star}}}\right )^q\right ),\label{termprinc}
\end{flalign} 
recalling \eqref{somma1}. The following estimate holds (see e.g.~\cite[Proposition 3]{IPV95}): for every $q\ge 0$ and $x\in \mathbb R$
\begin{equation*}
| \text{e}^{-x^2/4} H_q(x)| \le c \sqrt{q!}\,q^{-1/12},
\end{equation*}
hence the series whose term is the right hand side of \eqref{termprinc} is finite, i.e.,
\begin{equation}\label{CD}
\sum_{q=Q}^{+\infty}  \frac{H_{q-1}(u)^2\phi(u)^2}{q!}\left ( M + \frac{(1+M)}{q\beta_{\ell^\star}-1} \left ( \frac{1+\varepsilon}{(1+M)^{\beta_{\ell^\star}}}\right )^q\right ) < +\infty\,,
\end{equation} 
as soon as $M$ is sufficiently large. Repeating the same argument as for \eqref{termprinc}, using Lemma \ref{lemmaqq} and Lemma \ref{00}, and thanks to \eqref{CD}, we can apply Dominated Convergence Theorem and then Proposition \ref{prop-var2nd} and Proposition \ref{prop-varqth} to get 
\begin{equation*}
\lim_{T\rightarrow \infty}\sum_{q = Q}^{+\infty} \frac{\Var\left(\mathcal{M}_T(u)[q]\right)}{T^{2-\beta_{0}}} = \sum_{q = Q}^{+\infty} \lim_{T\rightarrow \infty}\frac{\Var\left(\mathcal{M}_T(u)[q]\right)}{T^{2-\beta_{0}}} =0\,,
\end{equation*} 
which is \eqref{eq1-proofthm1}. Putting together \eqref{lim1}, \eqref{lim2} and  \eqref{eq1-proofthm1})we finally find that
\begin{eqnarray*}
\lim_{T\to\infty} \frac{\Var(\mathcal M_T(u))}{T^{2-\beta_0}} = \frac{2\phi(u)^2 C_0(0)}{(1-\beta_0 )(2-\beta_0 )}=:K_0(u).
\end{eqnarray*}
Note that, if $u=0$, then $\mathcal{M}_T(u)[2]\equiv 0$ and the sufficient condition in order to have \eqref{eq1-proofthm1} is $\beta _{0}<\min (3\beta _{\ell ^{\star }},1)$. 

This implies that, if either $u\ne0$ and $\beta _{0}<\min (2\beta _{\ell ^{\star }},1)$ or $u=0$ and $\beta _{0}<\min (3\beta _{\ell ^{\star }},1)$, then
$$
\widetilde{\mathcal{M}}_T(u)=\frac{\mathcal{M}_T(u)[1]}{\sqrt{K_0(u)}\,T^{1-\beta_{0}/2}}+o_{\mathbb{P}}(1)\,.
$$
Consequently, since $\mathcal{M}_T(u)[1]$ is Gaussian for any $T>0$, it is clear that the asymptotic distribution of $\widetilde{\mathcal{M}}_T(u)$ is standard Gaussian.
\end{proof} 

\subsection{Proof of Theorem \ref{due}}

We will need the following well known  result. 
\begin{theorem}[\cite{DM79, Ta:79}] 
Let $\xi(t)$, $t\in\mathbb{R}$, be a real measurable mean-square continuous
stationary Gaussian process with mean $\mathbb{E}\left[\xi(t)\right]$ and
covariance function $\rho(t-s)=\rho(|t-s|)=\Cov(\xi(t),\xi(s))$.
Moreover, assume that 
\begin{equation}\label{eq:DM79}
\rho(t-s)=\frac{L(|t-s|)}{|t-s|^{\beta}}\,, \qquad \text{with} \quad
0<\beta<1,
\end{equation}
where $L$ is a slowly varying function. 
Let $F: \mathbb{R} \rightarrow \mathbb{R}$ be a Borel function such that $%
\mathbb{E}\left[F(N)^2\right]<+\infty$, where $N$ is a standard Gaussian
random variable. Then it is a well known fact that can be expanded as
follows 
\begin{equation*}
F(\xi)=\sum_{k=0}^\infty \frac{b_k}{k!}H_k(\xi)\,, \quad \text{where} \quad
b_k=\int_{\mathbb{R}}F(\xi)H_k(\xi)\phi(\xi)d\xi\,.
\end{equation*}
Assume there exists an integer $r$, the so-called Hermitian rank, such that $%
b_0=b_1=\cdots=b_{r-1}=0$ and $b_r\ne0$. Then , if $\beta\in(0,1/r)$, we
have that the finite-dimensional distributions of the random process 
\begin{equation*}
X_T(s)=\frac{1}{T^{1-\beta r /2}L(T)^{r/2}}\int_{0}^{Ts}\,\left[F(\xi(t))-b_0%
\right] \, dt \,, \qquad 0\le s \le1\,,
\end{equation*}
converge weakly, as $T\rightarrow \infty$, to the ones of the Rosenblatt
process of order $r$, that is 
\begin{equation*}
X_{\beta}(s):=\frac{b_r}{r!}\int_{(\mathbb{R}^r)'}\,\frac{e^{i(\lambda_1+\cdots+%
\lambda_r)s}-1}{i(\lambda_1+\cdots+\lambda_r)}\frac{W(d\lambda_1)\cdots
W(d\lambda_r)}{|\lambda_1\cdots\lambda_r|^{(1-\beta)/2}} \, dt \,, \qquad
0\le s \le1\,,
\end{equation*}
where $W$ is a complex Gaussian white noise.
\end{theorem}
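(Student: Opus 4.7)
The plan is to prove the statement by reducing it to the convergence of a single Wiener chaos and then identifying the limit via the spectral representation. First, I would use the Hermite expansion $F(\xi)=\sum_{k\ge r}(b_k/k!)H_k(\xi)$ together with the orthogonality of the chaoses to decompose
\begin{equation*}
X_T(s)=\sum_{k\ge r}\frac{b_k}{k!\,T^{1-\beta r/2}L(T)^{r/2}}\int_0^{Ts}H_k(\xi(t))\,dt.
\end{equation*}
The key preliminary computation is that, under \eqref{eq:DM79} and $\beta\in(0,1/r)$, a direct calculation (using $\mathbb E[H_k(\xi(t))H_k(\xi(s))]=k!\,\rho(t-s)^k$ and the Karamata/Tauberian argument used repeatedly in Section 4 above) shows that for each $k\ge r$ we have $\mathrm{Var}\int_0^{Ts}H_k(\xi(t))dt\approx T^{2-k\beta}L(T)^k$ when $k\beta<1$, and $\approx T$ otherwise. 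Since $r\beta<1$, the $r$-th chaos strictly dominates the normalization $T^{2-r\beta}L(T)^r$, and all higher-order terms vanish in $L^2$; this reduces the problem to proving convergence of $X_T^{(r)}(s):=\frac{b_r}{r!\,T^{1-r\beta/2}L(T)^{r/2}}\int_0^{Ts}H_r(\xi(t))\,dt$.

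Next, I would exploit the spectral representation. By Bochner's theorem one writes $\xi(t)=\int_{\mathbb R}e^{it\lambda}\,M(d\lambda)$ for a Hermitian complex Gaussian random measure $M$ with control measure $d\mu(\lambda)=f(\lambda)d\lambda$ determined by $\rho$; the Abelian/Tauberian theorem (see Remark \ref{rem_sl} for the analogous idea in the spherical setting) gives the asymptotic $f(\lambda)\sim c_\beta\,L(1/|\lambda|)\,|\lambda|^{\beta-1}$ as $\lambda\to 0$. By the diagram/product formula for Hermite polynomials of Gaussian integrals,
\begin{equation*}
H_r(\xi(t))=\int_{(\mathbb R^r)'}e^{it(\lambda_1+\cdots+\lambda_r)}\,M(d\lambda_1)\cdots M(d\lambda_r),
\end{equation*}
so that after Fubini and the change of variables $\lambda_i\mapsto\lambda_i/T$ (which transfers both the temporal scaling and, via the self-similar behavior of $f$ near zero, the slowly varying factor $L(T)^{r/2}$), one obtains
\begin{equation*}
X_T^{(r)}(s)=\frac{b_r}{r!}\int_{(\mathbb R^r)'}\frac{e^{is(\lambda_1+\cdots+\lambda_r)}-1}{i(\lambda_1+\cdots+\lambda_r)}\,K_T(\lambda_1,\dots,\lambda_r)\,M_T(d\lambda_1)\cdots M_T(d\lambda_r),
\end{equation*}
where $M_T$ is the rescaled white-noise and the kernel $K_T$ converges pointwise to $|\lambda_1\cdots\lambda_r|^{-(1-\beta)/2}$.

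Finally I would invoke the isometry and continuity of the multiple Wiener integral: convergence of the deterministic kernels in $L^2((\mathbb R^r)',d\lambda_1\cdots d\lambda_r)$ upgrades to $L^2(\Omega)$ convergence of the stochastic integrals, and then to convergence of finite-dimensional distributions since the multilinear structure respects joint convergence at different times $s_1,\dots,s_n$. The main obstacle is precisely the justification of this $L^2$ convergence of the kernels: one must combine the uniform convergence of $K_T$ on compact sets away from the diagonal $\{\lambda_1+\cdots+\lambda_r=0\}$ with a dominated convergence argument using a Potter-type bound for the slowly varying $L$, while simultaneously checking integrability of the limiting kernel at the origin (this needs $\beta<1/r$) and at infinity (which follows from the smoothing effect of the factor $(e^{is\Sigma}-1)/(i\Sigma)$). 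Once this $L^2$ convergence is established, identification of the limit with the Rosenblatt process of order $r$ is immediate from the integral representation.
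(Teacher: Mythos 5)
The paper does not prove this statement at all: it is quoted verbatim (as a ``well known result'') from Dobrushin--Major \cite{DM79} and Taqqu \cite{Ta:79}, and is then only \emph{applied} in the proof of Theorem \ref{due}. So there is no in-paper argument to compare with; your sketch should be measured against the classical proof in those references, and it is essentially that proof: Hermite expansion, $L^2$-negligibility of all chaoses of order $k>r$ under the normalization $T^{1-\beta r/2}L(T)^{r/2}$ (valid since $r\beta<1$, with the $k\beta\ge 1$ terms contributing only $O(T)$ or $O(T\log T)$), and convergence of the dominant $r$-th chaos via the spectral representation and a change of variables $\lambda_i\mapsto\lambda_i/T$.

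Two caveats on the execution. First, you assume the existence of a spectral density $f$ with $f(\lambda)\sim c_\beta L(1/|\lambda|)|\lambda|^{\beta-1}$; the hypothesis $\rho(\tau)=L(|\tau|)|\tau|^{-\beta}$ does \emph{not} guarantee absolute continuity of the spectral measure, and Dobrushin--Major deliberately work with the spectral measure $\mu$ itself, proving vague convergence of the rescaled measures $T^{\beta}L(T)^{-1}\mu(T^{-1}\,d\lambda)$ to $c_\beta|\lambda|^{\beta-1}d\lambda$ (a Tauberian step), and then invoke their lemma on convergence of multiple Wiener--It\^{o} integrals when both the kernels and the control measures vary. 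Related to this, your intermediate formula with ``the rescaled white noise $M_T$'' is not literally available: for finite $T$ the rescaled spectral measure is not Lebesgue, so the pre-limit cannot be written as an integral against $W$ with a kernel $K_T$; the correct statement is exactly the varying-control-measure lemma just mentioned. Second, the genuinely hard analytic content --- the $L^2$ bounds (Potter-type estimates for $L$, integrability of the limit kernel near the hyperplane $\lambda_1+\cdots+\lambda_r=0$ using $\beta<1/r$, and the decay provided by $(e^{is\Sigma}-1)/(i\Sigma)$) and the passage from kernel convergence to convergence of finite-dimensional distributions by multilinearity --- is correctly identified but deferred; that is precisely where the work in \cite{DM79} lies. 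As a blueprint, then, your proposal is faithful to the cited proof, but it is an outline rather than a complete argument, and the spectral-density assumption should be replaced by the spectral-measure formulation to match the stated hypotheses.
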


\begin{proof}[Proof of Theorem \ref{due}] 
Recall that $u\ne0$ and $2\beta_{\ell^\star}<\min(\beta_0,1)$.  From Lemma \ref{lem1} we have 
$$
\lim_{T\to\infty} \frac{\Var(\mathcal M_T(u)[1])}{T^{2-2\beta_{\ell^\star}}} = 0\,.
$$
Moreover, thanks to Proposition \ref{prop-varqth}, as for the proof of Theorem \ref{uno} (in particular \eqref{eq1-proofthm1}), we have, as $T\to +\infty$, 
$$
\lim_{T\to\infty} \frac{\sum_{q\ge 3} \Var(\mathcal M_T(u)[q])}{T^{2-2\beta_{\ell^\star}}} = 0\,,
$$
so that, recalling also Proposition \ref{prop-var2nd}, 
\begin{equation}\label{2chaosA}
\frac{\mathcal{M}_T(u)}{T^{1-\beta_{\ell^\star}}}=\frac{\mathcal{M}_T(u)[2]}{T^{1-\beta_{\ell^\star}}}+o_{\mathbb P}(1)\,.
\end{equation}
Moreover, since in $L^2(\Omega)$ we have the following equality
\begin{equation}\label{eq-2ndALM}
\mathcal{M}_T(u)[2]=\frac{J_2(u)}{2}\sum_{\ell=0}^\infty\sum_{m=-\ell}^\ell\,\int_0^T
\,H_{2}(a_{\ell m}(t))dt\,,
\end{equation} 
it holds that
\begin{equation}  \label{eq:var2spectral}
\frac{\mathcal{M}_T(u)[2]}{T^{1-\beta_{\ell^\star}}}=\frac{1}{T^{1-\beta_{\ell^\star}}}\sum_{\ell\in \mathcal{I}^\star}\sum_{m=-\ell}^\ell \frac{J_2(u)}{2}\int_0^T H_2(a_{\ell, m}(t))\, dt +o_{\mathbb P}(1)\,.
\end{equation}
Indeed, reasoning exactly as in the Proposition \ref{prop-var2nd}, we have that 
\begin{eqnarray*}
&&\lim_{T\to\infty}\mathbb E\left [\left(\frac{\mathcal{M}_T(u)[2]}{T^{1-\beta_{\ell^\star}}} -\frac{1}{T^{1-\beta_{\ell^\star}}}\sum_{\ell\in \mathcal{I}^\star}\sum_{m=-\ell}^\ell \frac{J_2(u)}{2}\int_0^T H_2(a_{\ell, m}(t))\, dt \right)^2\right ]\\
&&=\lim_{T\to\infty} \frac{J_2(u)^2}{2T^{2-2\beta_{\ell^\star}}}\sum_{\ell\notin \mathcal I^\star} (2\ell+1)
\int_0^T\int_0^TC_\ell(t-s)^2dt\,ds =0\,.
\end{eqnarray*}
From \eqref{2chaosA} and \eqref{eq:var2spectral}, in order to understand the asymptotic distribution of $\mathcal M_T(u)$, it suffices to investigate the leading term on the right hand side of \eqref{eq:var2spectral}. 
Recall Condition \ref{basic2}, for $\ell\in \mathcal I^\star$ we have that 
$$
C_{\ell}(\tau)= \frac{G_\ell(\tau)}{(1+|\tau|)^{\beta_{\ell^\star}}} \,,
$$
where in particular $G_\ell$ is a slowly varying function.
Hence, setting $\xi(t)=a_{\ell, m}(t)$, we automatically have that $\rho=\rho_\ell=C_{\ell}$, $L=L_\ell=G_\ell$ and, as a consequence, that
\begin{equation*}
X_T^{\ell, m}:=\frac{1}{C_\ell(0)\,T^{1-\beta_{\ell^\star}}}\int_0^T \frac{J_2(u)}{2}H_2(a_{\ell, m}(t))\, dt 
\overset{d}{\longrightarrow} \frac{J_2(u)}{2a(\beta_{\ell^\star})}\,X_{m;\beta_{\ell^\star}}\,, \qquad \text{as } T \rightarrow
\infty\,,
\end{equation*}
for all $m=-\ell,\dots,\ell$, where for each $m$
$X_{m;\beta_{\ell^\star}}$
is a standard Rosenblatt random variable \eqref{Xbeta} of parameter $\beta_{\ell^\star}$. 
Moreover, since the $X_T^{\ell, m}$ are all independent for each $T$ we have that 
\begin{eqnarray*}
&&\widetilde{\mathcal{M}}_T(u)
=\sqrt{\frac{T^{1-\beta_{\ell^\star}}}{\Var\left(\mathcal{M}_T(u)[2]\right)}}\sum_{\ell\in \mathcal{I}^\star}C_\ell(0)\sum_{m=-\ell}^\ell 
\frac{\int_0^T\frac{J_2(u)}{2} H_2(a_{\ell, m}(t))\, dt}{C_\ell(0)\,T^{1-\beta_{\ell^\star}}}+o_{\mathbb{P}}(1)\\
&&\mathop{\to}^d \left (\frac{J_2(u)^2}{2}\sum_{\ell\in \mathcal{I}^\star}\frac{(2\ell+1)C_\ell(0)^2}{(1-\beta_{\ell^\star})(1-2\beta_{\ell^\star})}\right )^{-1/2}\sum_{\ell\in \mathcal{I}^\star}C_\ell(0)\sum_{m=-\ell}^{\ell}\frac{J_2(u)}{2a(\beta_{\ell^\star})}X_{m;\beta_\ell^\star}\\
&&= \sum_{\ell\in \mathcal{I}^\star}\frac{C_\ell(0)}{\sqrt{v^\star}}\sum_{m=-\ell}^{\ell}X_{m;\beta_{\ell^\star}},
\end{eqnarray*}
where 
$$
v^\star= a(\beta_{\ell^\star})^2\sum_{\ell\in \mathcal{I}^\star}\frac{2\,(2\ell+1)\,C_\ell(0)^2}{(1-\beta_{\ell^\star})(1-2\beta_{\ell^\star})}\,,
$$
and the proof is concluded.
\end{proof}

\subsection{Proof of Theorem \ref{tre}}

First of all assume that $3\beta _{\ell ^{\star }}<\min (1,\beta _{0})$. Since we are in the case where $u=0$, we have that all even chaotic projections vanish and hence that
$$
\Var\left(\mathcal{M}_T\right)=\Var\left(\mathcal{M}_T[1]\right)+\Var\left(\mathcal{M}_T[3]\right)+\sum_{q\ge2}\Var\left(\mathcal{M}_T[2q+1]\right)\,.
$$
where we used the notation $\mathcal{M}_T(0)=:\mathcal{M}_T$.
As a consequence,  as in the proof of Theorem \ref{uno}, we have
\begin{flalign*}
&\lim_{T\rightarrow\infty}\frac{\Var\left(\mathcal{M}_T\right)}{T^{2-3\beta_{\ell^\star}}}=\lim_{T\rightarrow\infty}\frac{\Var\left(\mathcal{M}_T[1]\right)}{T^{2-3\beta_{\ell^\star}}}+\lim_{T\rightarrow\infty}\frac{\Var\left(\mathcal{M}_T[3]\right)}{T^{2-3\beta_{\ell^\star}}}
+\sum_{q\ge2}\lim_{T\rightarrow\infty}\frac{\Var\left(\mathcal{M}_T[2q+1]\right)}{T^{2-3\beta_{\ell^\star}}}\,.
\end{flalign*}
Now,
\begin{eqnarray*}
&&\lim_{T\rightarrow\infty}\frac{\Var\left(\mathcal{M}_T[1]\right)}{T^{2-3\beta_{\ell^\star}}}=0\,;
\end{eqnarray*}
while from Proposition \ref{prop-varqth} we know that
\begin{flalign*}
&\lim_{T\rightarrow \infty }\frac{\Var\left(\mathcal{M}_T[3]\right)}{T^{2-3\beta _{\ell^\star}}} =\frac{2}{3!(1-3\beta_{\ell^\star})(2-3\beta_{\ell^\star})}\sum_{\ell
_{1},\ell _{2},\ell _{3}\in \mathcal{I}^\star} \left ( \prod\limits_{i=1}^{3}\sqrt{\frac{2\ell _{i}+1}{4\pi }} C_{\ell_i}(0)\right)
\mathcal{G}_{\ell _{1}\ell _{2}\ell _{3}}^{000}=: K_3.
\end{flalign*}
Moreover
\begin{eqnarray*}
&&\lim_{T\rightarrow \infty }\frac{\sum_{q\ge2}\Var\left(\mathcal{M}_T[2q+1]\right)}{T^{2-3\beta_{\ell^\star}}} =0\,,
\end{eqnarray*}
which of course implies that
$$
\widetilde{\mathcal{M}}_T(u)=\frac{\mathcal{M}_T(u)[3]}{\sqrt{K_3}T^{1 - \frac32 \beta_{\ell^\star}}}+o_{\mathbb P}(1)\,,
$$
as claimed.

\subsection{Proof of Theorem \ref{quattro}}

The result below is just \cite[Theorem 6.3.1]{noupebook} restated for our framework as a lemma. Recall the definition of cumulants for a random variable \cite[\S 4.3]{MaPeCUP}. 
\begin{lemma}\label{lemma:NP12}
Assume that the functional $\widetilde{\mathcal{M}}_T(u)$ in \eqref{eq:stdM} satisfies the following conditions:
\begin{itemize}
\item[(a)] For each $q\ge1$, $\Var(\widetilde{\mathcal{M}}_T(u)[q])\rightarrow\sigma_q^2$, as $T\rightarrow\infty$ and for some $\sigma_q^2\ge0$;
\item[(b)] $\displaystyle{\sigma^2 :=\sum\limits_{q=1}^\infty \sigma_q^2<+\infty}$;
\item[(c)] For each $q\ge2$, $\Cum_4(\widetilde{\mathcal{M}}_T(u)[q])\rightarrow0$, as $T\rightarrow\infty$;
\item[(d)] $\displaystyle{\lim_{Q\rightarrow\infty}\sup_{T>0}\sum_{q=Q+1}^\infty\Var(\widetilde{\mathcal{M}}_T(u)[q])=0}$.
\end{itemize}
Then $\widetilde{\mathcal{M}}_T(u)\stackrel{d}{\to} Z$, as $T\rightarrow\infty$, where $Z\sim \mathcal N(0,\sigma^2)$.
\end{lemma}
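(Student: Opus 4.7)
The plan is to combine a fourth-moment CLT applied chaos-by-chaos with the Peccati-Tudor multidimensional criterion to get a joint CLT for any finite truncation of the Wiener chaos decomposition, and then use condition (d) as a uniform tail bound to pass from the truncation to the full series. Write
\begin{equation*}
\widetilde{\mathcal M}_T(u) = S_T^Q + R_T^Q,\qquad S_T^Q:=\sum_{q=1}^Q \widetilde{\mathcal M}_T(u)[q],\qquad R_T^Q:=\sum_{q=Q+1}^\infty \widetilde{\mathcal M}_T(u)[q],
\end{equation*}
where the series converges in $L^2(\Omega)$ by the Stroock-Varadhan decomposition. By orthogonality of distinct chaoses, $\Var(R_T^Q)=\sum_{q=Q+1}^\infty \Var(\widetilde{\mathcal M}_T(u)[q])$, so condition (d) gives a uniform-in-$T$ handle on the tail that will be used at the end.

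For fixed $Q\ge 1$, I would show that
\begin{equation*}
\bigl(\widetilde{\mathcal M}_T(u)[1],\dots,\widetilde{\mathcal M}_T(u)[Q]\bigr)\mathop{\longrightarrow}^{d} (Z_1,\dots,Z_Q),\qquad T\to +\infty,
\end{equation*}
where $Z_1,\dots,Z_Q$ are independent with $Z_q\sim \mathcal N(0,\sigma_q^2)$. The first component $\widetilde{\mathcal M}_T(u)[1]$ lives in the first Wiener chaos, hence is Gaussian for every $T$, so (a) at $q=1$ already yields $\widetilde{\mathcal M}_T(u)[1]\to Z_1$. For each $q\ge 2$, the Nualart-Peccati Fourth Moment Theorem (see e.g.~\cite[Theorem 5.2.7]{noupebook}) says that a sequence of $q$-th chaos random variables with bounded variances converges in law to a centered Gaussian of variance $\sigma_q^2$ if and only if the variances converge to $\sigma_q^2$ and the fourth cumulants tend to zero; (a) and (c) are exactly these two hypotheses. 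To upgrade componentwise convergence to joint convergence I invoke the Peccati-Tudor Theorem (\cite[Theorem 6.2.3]{noupebook}): for a vector whose components lie in fixed, distinct Wiener chaoses, marginal convergence to Gaussian laws together with convergence of the covariance matrix forces joint convergence to a Gaussian vector with that covariance. Since components in distinct chaoses are orthogonal in $L^2(\Omega)$, the limiting covariance matrix is $\mathrm{diag}(\sigma_1^2,\dots,\sigma_Q^2)$ and the $Z_q$ are independent. By continuous mapping,
\begin{equation*}
S_T^Q\mathop{\longrightarrow}^{d} \sum_{q=1}^Q Z_q\sim \mathcal N\Bigl(0,\sum_{q=1}^Q\sigma_q^2\Bigr),\qquad T\to+\infty.
\end{equation*}

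To remove the truncation I would apply the standard three-step approximation lemma (e.g.~\cite[Theorem 3.2]{noupebook}-style argument, or Billingsley's criterion): if for every $Q$ one has $S_T^Q\xrightarrow{d} W_Q$, if $W_Q\xrightarrow{d} W$ as $Q\to+\infty$, and if
\begin{equation*}
\lim_{Q\to +\infty}\limsup_{T\to +\infty}\mathbb P\bigl(|R_T^Q|>\varepsilon\bigr)=0\qquad \text{for every }\varepsilon>0,
\end{equation*}
then $\widetilde{\mathcal M}_T(u)\xrightarrow{d} W$. The first input is the previous paragraph; the second follows from (b), since $\sum_{q=1}^Q \sigma_q^2\to\sigma^2$ forces $W_Q=\mathcal N(0,\sum_{q\le Q}\sigma_q^2)\xrightarrow{d}\mathcal N(0,\sigma^2)=:W$; the third follows from Chebyshev's inequality and (d), because
\begin{equation*}
\mathbb P\bigl(|R_T^Q|>\varepsilon\bigr)\le \frac{\Var(R_T^Q)}{\varepsilon^2}=\frac{1}{\varepsilon^2}\sum_{q=Q+1}^\infty \Var\bigl(\widetilde{\mathcal M}_T(u)[q]\bigr),
\end{equation*}
and the right-hand side vanishes uniformly in $T$ as $Q\to+\infty$ by assumption. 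Combining the three gives $\widetilde{\mathcal M}_T(u)\xrightarrow{d}\mathcal N(0,\sigma^2)$, as required.

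The main technical obstacle is the joint CLT step: the Fourth Moment Theorem by itself only produces one-dimensional Gaussian limits for each chaotic projection, and independence of the limits for distinct chaoses is not automatic from componentwise convergence. This is why Peccati-Tudor is essential, and it is precisely the content (and cost) of \cite[Theorem 6.3.1]{noupebook} being invoked here.
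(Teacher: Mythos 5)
The paper does not prove this lemma at all: it is stated verbatim as a citation of \cite[Theorem 6.3.1]{noupebook}, so there is no internal argument to compare against. Your proposal correctly reconstructs the standard proof of that cited theorem --- Fourth Moment Theorem chaos-by-chaos, Peccati--Tudor for the joint CLT of the truncation $S_T^Q$, and the Billingsley-type three-step approximation using (b) for the limit of the truncated variances and (d) plus Chebyshev for the uniform tail --- and every step is sound. The only point worth flagging is the degenerate case $\sigma_q^2=0$ (which the paper explicitly allows in the remark following Proposition \ref{prop_aus}): the Fourth Moment Theorem as usually stated assumes a strictly positive limiting variance, so for those indices you should instead note that $\Var(\widetilde{\mathcal M}_T(u)[q])\to 0$ already gives convergence in probability to $0$, after which Peccati--Tudor (or a trivial Slutsky argument) still yields the joint limit; this is a one-line fix and does not affect the validity of the overall argument.
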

We will use Lemma \ref{lemma:NP12} to prove Theorem \ref{quattro}. Let us first focus on Condition \emph{(c)}.
\begin{proposition}\label{prop_aus}
Assume $\beta_0=1$. If either $u\ne0$ and $2\beta _{\ell^\star}>1$ or $u=0$ and $3\beta _{\ell^\star}>1$ we have 
\begin{equation*}
\widetilde{\mathcal{M}}_T(u)[q]\stackrel{d}{\to} Z\,, \quad \text{ as } \,\,T\rightarrow \infty,
\end{equation*}
where $Z\sim \mathcal N(0,\sigma_q^2)$ is a standard Gaussian random variable whose variance is given by 
$$
\sigma_q^2 := \frac{s^2_q}{\sum_{k=1}^{+\infty} s^2_k}\in [0,+\infty),
$$
where the sequence $\lbrace s^2_k, k\ge 1\rbrace$ is defined in Theorem \ref{quattro}.
\end{proposition}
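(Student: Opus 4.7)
The plan is to reduce the statement to the Fourth Moment Theorem of Nualart--Peccati (see e.g.\ \cite[Theorem 5.2.7]{noupebook}), which states that any sequence belonging to a fixed $q$-th Wiener chaos whose variance converges to $\sigma^2\ge 0$ and whose fourth cumulant vanishes converges in distribution to $\mathcal N(0,\sigma^2)$. Since the asymptotic variance of each $\mathcal M_T(u)[q]/\sqrt T$ has already been identified in Propositions \ref{prop-var2nd} and \ref{prop-varqth}, the task essentially reduces to controlling contractions.

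For $q=1$, $\mathcal M_T(u)[1]$ is centered Gaussian for every $T$, so combining Lemma \ref{lem1} (with $\beta_0=1$) and Theorem \ref{quattro} one immediately obtains
\[
\Var(\widetilde{\mathcal M}_T(u)[1]) \;=\; \frac{\Var(\mathcal M_T(u)[1])}{\Var(\mathcal M_T(u))}\;\longrightarrow\; \frac{s_1^2}{\sum_{k\ge 1}s_k^2}\;=\;\sigma_1^2,
\]
and no further argument is needed. For $q\ge 2$ (with $q$ odd if $u=0$, since the even-order chaoses vanish there), I would represent the projection as a $q$-th multiple Wiener--It\^o integral on the Gaussian isonormal space generated by $Z$: writing $Z(x,t)=I_1(e_{x,t})$ with $\langle e_{x,t}, e_{y,s}\rangle = \Gamma(\langle x,y\rangle, t-s)$, one has $H_q(Z(x,t))=I_q(e_{x,t}^{\otimes q})$ and
\[
\mathcal M_T(u)[q] \;=\; \frac{J_q(u)}{q!}\, I_q\!\left(f_T^{(q)}\right), \qquad f_T^{(q)} \;:=\; \int_{[0,T]\times\mathbb S^2} e_{x,t}^{\otimes q}\, dx\, dt.
\]
Propositions \ref{prop-var2nd} and \ref{prop-varqth}, applied with $\beta_0=1$ and, respectively, $2\beta_{\ell^\star}>1$ or $3\beta_{\ell^\star}>1$, yield $\Var(\mathcal M_T(u)[q])/T\to s_q^2$. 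By the Fourth Moment Theorem it then suffices to show, for each $r=1,\dots,q-1$,
\[
\frac{1}{T^2}\,\bigl\|f_T^{(q)}\otimes_r f_T^{(q)}\bigr\|^2 \;\longrightarrow\; 0 \qquad \text{as } T\to+\infty,
\]
where $\otimes_r$ denotes the standard $r$-th contraction of symmetric tensors.

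The main technical obstacle lies in the verification of these contraction bounds. Expanding $\Gamma$ through the series \eqref{eq2} and integrating out the spherical variables via the addition formula and the orthogonality of the Legendre polynomials, $\bigl\|f_T^{(q)}\otimes_r f_T^{(q)}\bigr\|^2$ becomes a weighted multipole sum of four-fold time integrals in which every time variable appears in two factors and the covariance exponents sum to $2q$, roughly of the form
\[
\int_{[0,T]^4} C_{\ell_1}(t_1-t_2)^{r}\, C_{\ell_2}(t_3-t_4)^{r}\, C_{\ell_3}(t_1-t_3)^{q-r}\, C_{\ell_4}(t_2-t_4)^{q-r}\, dt_1 dt_2 dt_3 dt_4.
\]
Under the present hypotheses each $C_\ell^{p}$ with $p\ge 1$ is integrable on $\mathbb R$: for $\ell=0$ this follows from $\beta_0=1$, while for $\ell\ge 1$ it follows from $2\beta_{\ell^\star}>1$ (resp.\ $3\beta_{\ell^\star}>1$). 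An iteration of Young's convolution inequality together with the uniform control \eqref{sup} on $G_\ell/C_\ell(0)$ and the elementary estimates in Lemmas \ref{tec2}--\ref{tecq_3} then bounds each such four-fold integral by $O(T)$, so that the displayed quantity is $O(1/T)=o(1)$. Uniform summability over $\ell_1,\dots,\ell_q$ is ensured by the Gaunt bound \eqref{gauntQ} together with \eqref{eqConv}, in the same spirit as the control of \eqref{CD} in the proof of Theorem \ref{uno}. The Fourth Moment Theorem then delivers $\mathcal M_T(u)[q]/\sqrt T\to \mathcal N(0,s_q^2)$, and renormalizing by $\sqrt{\Var(\mathcal M_T(u))/T}\to \sqrt{\sum_k s_k^2}$ (Theorem \ref{quattro}) produces the claimed $\widetilde{\mathcal M}_T(u)[q]\to \mathcal N(0,\sigma_q^2)$.
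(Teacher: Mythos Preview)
Your overall strategy is the same as the paper's: invoke the Fourth Moment Theorem, check the variance limit via Propositions \ref{prop-var2nd} and \ref{prop-varqth}, and then show that the fourth cumulant (equivalently, all non-trivial contractions) of $\mathcal M_T(u)[q]/\sqrt T$ vanishes. However, the heart of your argument contains a genuine error.

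The claim that ``each $C_\ell^{p}$ with $p\ge 1$ is integrable on $\mathbb R$'' is false. Under the hypothesis $2\beta_{\ell^\star}>1$ one only knows $\beta_{\ell^\star}>\tfrac12$, so for any $\ell$ with $\beta_\ell\in(\tfrac12,1)$ the function $C_\ell$ itself is \emph{not} in $L^1(\mathbb R)$; in the case $u=0$, $3\beta_{\ell^\star}>1$ even $C_\ell^2$ may fail to be integrable. Consequently, for $q=2$, $r=1$ (and similarly for $q=3$, $r\in\{1,2\}$) the four-fold time integral you display is \emph{not} $O(T)$: a direct computation gives order $T^{3-q\beta_{\ell^\star}}$, which does go to $o(T^2)$ but is strictly larger than $T$. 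Young's convolution inequality alone cannot close the argument here, because the exponents $r$ and $q-r$ are individually too small for the corresponding powers of $C_\ell$ to lie in $L^1$.

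The paper circumvents this by working directly with the fourth cumulant and using the elementary inequality $x^\alpha y^\beta \le x^{\alpha+\beta}+y^{\alpha+\beta}$ to merge two of the four $\Gamma$-factors into a single $|\Gamma|^q$, whose time integral \emph{is} bounded (since $q\beta_{\ell^\star}>1$). After a Jensen step on the sphere to reduce to a single multipole sum, the remaining three time integrals factor and yield $\Cum_4=O\bigl(T^{3-q\beta_{\ell^\star}}\bigr)$, hence $\Cum_4(\widetilde{\mathcal M}_T(u)[q])=O(T^{1-q\beta_{\ell^\star}})\to 0$. Your contraction route can be repaired along the same lines (e.g.\ by applying $ab\le\tfrac12(a^2+b^2)$ to one pair of factors before integrating), but as written the integrability claim and the $O(T)$ bound do not hold.
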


\begin{remark}
Note that some of the chaoses might converge to a degenerate Gaussian (that is, with zero
expected value and variance).
\end{remark}

\begin{proof}[Proof of Proposition \ref{prop_aus}] It suffices to check \cite[Theorem 5.2.7]{noupebook} that the fourth cumulant goes to zero as $T\to +\infty$, i.e.
$$
\lim_{T\to\infty} \Cum_4 \left (\widetilde{\mathcal{M}}_T(u)[q] \right ) = 0\,.
$$
Recall \eqref{def_q}. 
For any $1\leq \alpha \leq q-1$, we have (see e.g.~\cite[\S 4.3]{MaPeCUP}, in particular \S 4.3.1)
\begin{flalign*}
&\Cum_{4}\left ( \int_{0}^{T}\int_{\mathbb{S}^{2}}H_{q}(Z(x,t))dxdt\right ) = \int_{[0,T]^{4}}\int_{(\mathbb{S}^{2})^4}\,dx_{1}dx_{2}dx_{3}dx_{4}dt_{1}dt_{2}dt_{3}dt_{4}\\
&\qquad\times  \Cum\Big (H_{q}(Z(x_{1},t_{1})),H_{q}(Z(x_{2},t_{2}))H_{q}(Z(x_{3},t_{3}))H_{q}(Z(x_{4},t_{4})) \Big )
\\
&\leq c\,\int_{[0,T]^{4}}\int_{(\mathbb{S}^{2})^4}\left |
\mathbb{E}\left[ Z(x_{1},t_{1})Z(x_{2},t_{2})\right] \right |^{q-\alpha
}\left | \mathbb{E}\left[ Z(x_{2},t_{2})Z(x_{3},t_{3})\right] \right |^{\alpha } \\
&\qquad\times \left | \mathbb{E}\left[ Z(x_{3},t_{3})Z(x_{4},t_{4})\right]
\right |^{q-\alpha }\left | \mathbb{E}\left[ Z(x_{4},t_{4})Z(x_{1},t_{1})%
\right] \right|^{\alpha }dx_{1}dx_{2}dx_{3}dx_{4}dt_{1}dt_{2}dt_{3}dt_{4}.
\end{flalign*}%
For $x,y$ positive numbers, it holds that
\begin{equation*}
x^{\alpha }y^{\beta }\leq x^{\alpha +\beta }+y^{\alpha +\beta }\,,
\end{equation*}%
as a consequence, 
\begin{flalign*}
&\Cum_{4}\left ( \int_{0}^{T}\int_{\mathbb{S}^{2}}H_{q}(Z(x,t))dxdt\right ) \\
&\leq c\,\int_{[0,T]^{4}}\int_{(\mathbb{S}^{2})^4}\left | \mathbb{E%
}\left[ Z(x_{1},t_{1})Z(x_{2},t_{2})\right] \right|^{q-\alpha }\left |
\mathbb{E}\left[ Z(x_{2},t_{2})Z(x_{3},t_{3})\right] \right |^{\alpha}\\
&\qquad \qquad\qquad \qquad\qquad \qquad \times \left | \mathbb{E}\left[ Z(x_{3},t_{3})Z(x_{4},t_{4})\right] \right |^{q}\,dx_{1}dx_{2}dx_{3}dx_{4}dt_{1}\dots dt_{4}\\
& =  c\,\int_{[0,T]^{4}}\int_{(\mathbb{S}^{2})^4}\left |
\Gamma (\left\langle x_{1},x_{2}\right\rangle ,t_{2}-t_{1})\right |^{q-\alpha }
\left | \Gamma (\left\langle x_{2},x_{3}\right\rangle
,t_{3}-t_{2})\right |^{\alpha }\\
&\qquad \qquad\qquad \qquad\qquad \qquad \times \left | \Gamma (\left\langle x_{3},x_{4}\right\rangle ,t_{4}-t_{3})\right |^{q}\,dx_{1}dx_{2}dx_{3}dx_{4}dt_{1}\dots dt_{4} \\
&\leq c\,T \int_{(\mathbb{S}^{2})^4}\int_{[-T,T]}\left | \Gamma (\left\langle
x_{1},x_{2}\right\rangle ,s_{1})\right |^{q-\alpha
}ds_{1}\int_{[-T,T]}\left | \Gamma (\left\langle
x_{2},x_{3}\right\rangle ,s_{2})\right |^{\alpha
}ds_{2}\\
&\qquad \qquad\qquad \qquad\qquad \qquad\times \int_{[-T,T]}\left | \Gamma (\left\langle
x_{3},x_{4}\right\rangle ,s_{3})\right |^{q}\,ds_{3}\,dx_{1}dx_{2}dx_{3}dx_{4} \\
& \leq c\,T \int_{\lbrack -T,T]} \sum_{\ell=0}^{+\infty} \frac{(2\ell+1)}{4\pi} C_\ell(0) \left | \frac{C_\ell(s_1)}{C_\ell(0)}\right |^{q-\alpha}\, ds_{1} \int_{[-T,T]} \sum_{\ell=0}^{+\infty} \frac{(2\ell+1)}{4\pi} C_\ell(0) \left | \frac{C_\ell(s_2)}{C_\ell(0)}\right |^{\alpha}\,ds_{2}\\
&\qquad \qquad\qquad \qquad\qquad \qquad\times \int_{[-T,T]} \sum_{\ell=0}^{+\infty} \frac{(2\ell+1)}{4\pi} C_\ell(0) \left | \frac{C_\ell(s_3)}{C_\ell(0)}\right |^{q}\,ds_{3}\,, 
\end{flalign*}
where for the last inequality we used Jensen inequality, recalling \eqref{somma1}. 
For $k=1,\dots, q-1$ we have that, as $T\to +\infty$, 
$$
\int_{\lbrack -T,T]} \sum_{\ell=0}^{+\infty} \frac{(2\ell+1)}{4\pi} C_\ell(0) \left | \frac{C_\ell(\tau)}{C_\ell(0)}\right |^{k}\, d\tau = O\left ( T^{1-k\beta_{\ell^\star}}(1 +  \mathbf{1}_{k\beta_{\ell^\star}=1}\log T ) \right )
$$
whereas for $k=q$ (since $q\beta_{\ell^\star} >1$)
$$
\int_{\lbrack -T,T]} \sum_{\ell=0}^{+\infty} \frac{(2\ell+1)}{4\pi} C_\ell(0) \left | \frac{C_\ell(\tau)}{C_\ell(0)}\right |^{q}\, d\tau = O\left ( 1 \right ).
$$
Hence, as $T\to +\infty$,
\begin{eqnarray*}
&&\Cum_{4}\left ( \int_{0}^{T}\int_{\mathbb{S}^{2}}H_{q}(Z(x,t))dxdt\right )= O\left ( T^{3-q\beta_{\ell^\star}}(1 + \delta_{(q-\alpha)\beta_{\ell^\star}}^1 \log T) (1 +\delta_{\alpha\beta_{\ell^\star}}^1 \log T)       \right ) \,.
\end{eqnarray*}%
From Proposition \ref{prop-varqth}  we know that $\Var\left( \mathcal{M}_T(u)\right) \sim T\,\sum_{k=1}^{+\infty} s_k^2$ thus as $T\to +\infty$
\begin{equation*}
\Cum_{4}\left ( \frac{\mathcal{M}_T(u)[q]}{\sqrt{\Var\left( \mathcal M_{T}(u)\right) }}\right ) = O\left ( T^{1-q\beta_{\ell^\star}}(1 + \mathbf{1}_{(q-\alpha)\beta_{\ell^\star}=1} \log T) (1 +\mathbf{1}_{\alpha\beta_{\ell^\star}=1} \log T)       \right ) 
\end{equation*}
so that
$$
\lim_{T\to\infty} \Cum_{4}\left ( \frac{\mathcal{M}_T(u)[q]}{\sqrt{\Var\left( \mathcal M_{u}(T)\right) }}\right ) =0
$$
and the proof is concluded.
\end{proof}

We are now in the position to prove Theorem \ref{quattro}.

\begin{proof}[Proof of Theorem \ref{quattro}]
Here we have $\beta_0=1$. From Lemma \ref{lem1} and Condition \ref{basic2} we know that 
$$
\lim_{T\to\infty}\frac{\Var(\mathcal M_T(u)[1])}{T} = \phi(u)^2 \int_{-\infty}^{+\infty} C_0(\tau)\,d\tau >0\,.
$$
Assume first that $u\ne0$ and $2\beta _{\ell ^{\star }}>1$, then, using Propositions \ref{prop-var2nd} and \ref{prop-varqth} we have that
\begin{equation*}
\lim_{T\rightarrow \infty }\frac{\Var\left( \mathcal{M}%
_{T}(u)[2]\right) }{T\,}=\frac{u^{2}\phi (u)^{2}}{2}\sum_{\ell =0}^{\infty }(2\ell +1)\,F_{\ell},
\end{equation*}
and for $q\ge 3$, since of course $q\beta_{\ell^\star} >1$,
$$
\lim_{T\to\infty}\frac{\Var\left(\mathcal{M}_T(u)[q]\right)}{T}=s^2_q\,,
$$
where we recall that
$$
s^2_q =  \frac{4\pi\,H_{q-1}(u)^2\phi(u)^2}{q!}\sum_{\ell
_{1},\ell _{2},\dots ,\ell _{q}=0}^{\infty }\mathcal{G}_{\ell _{1}...\ell _{q}}^{0\,...0}\prod\limits_{i=1}^{q}\sqrt{\frac{2\ell _{i}+1}{4\pi }} \int_{-\infty}^{+\infty} C_{\ell _{1}}(\tau)\,C_{\ell _{2}}(\tau)\cdots C_{\ell_{q}}(\tau)\,d\tau\,.
$$
As in the proof of \eqref{eq1-proofthm1},
thanks to Dominated Convergence Theorem, we can write 
\begin{eqnarray*}
\lim_{T\to\infty}\frac{\Var\left(\mathcal{M}_T(u)\right)}{T}&=&\lim_{T\to\infty} \frac{\Var\left(\mathcal{M}_T(u)[1]\right)}{T}+\lim_{T\to\infty}\frac{\Var\left(\mathcal{M}_T(u)[2]\right)}{T}\\
&&+\sum_{q\ge3}\lim_{T\to\infty}\frac{\Var\left(\mathcal{M}_T(u)[q]\right)}{T}\\
&=&\phi(u)^2 \int_{-\infty}^{+\infty} C_0(\tau)\,d\tau+\frac{u^{2}\phi (u)^{2}}{2}\sum_{\ell =0}^{\infty }(2\ell +1)\,F_{\ell}+ \sum_{q\ge3} s^2_q\,.
\end{eqnarray*}
Now assume that $u=0$ and $3\beta _{\ell ^{\star }}>1$, then analogously
\begin{flalign*}
\lim_{T\to\infty}\frac{\Var\left(\mathcal{M}_T(u)\right)}{T}&=\lim_{T\to\infty}\frac{\Var\left(\mathcal{M}_T(u)[1]\right)}{T}+\sum_{q\ge1}\lim_{T\to\infty}\frac{\Var\left(\mathcal{M}_T(u)[2q+1]\right)}{T}\\
&=\phi(u)^2 \int_{-\infty}^{+\infty} C_0(\tau)\,d\tau+\sum_{q\ge 1} s^2_{2q+1}\,.
\end{flalign*}

In order to prove Convergence in distribution to a Gaussian random variable, we are going to check the four conditions of Lemma \ref{lemma:NP12}. Conditions \emph{(a)} and \emph{(b)} are verified thanks to 
Theorem \ref{quattro} and Proposition \ref{prop-varqth}. 
Condition \emph{(c)} of Lemma \ref{lemma:NP12} is immediately verified by Proposition \ref{prop_aus}, thanks to the Fourth Moment Theorem \cite[Theorem 1]{NP:05}.
Let us now check Condition \emph{(d)}. 
Recall that 
$$
\phi(u)^2\int_{\mathbb R}C_0(\tau) d\tau>0\,.
$$
Fix $\varepsilon>0$ such that 
$$
\phi(u)^2\int_{\mathbb R}C_0(\tau) d\tau -\varepsilon>0\,.
$$
Then we have for some $T_\varepsilon >0$ 
$$
\frac{\Var\left(\mathcal{M}_T(u)[1]\right)}{T}\ge\phi(u)^2\int_{\mathbb R}C_0(\tau) d\tau -\varepsilon,
$$
for every $T>T_\varepsilon$. 
Hence
\begin{eqnarray*}
&&\sup_{T>T_{\varepsilon}}\sum_{q=Q}^\infty\frac{\Var\left(\mathcal{M}_T(u)[q]\right)}{\Var\left(\mathcal{M}_T(u)\right)}\le
 \frac{\sup_{T>T_{\varepsilon}}\sum_{q=Q}^\infty\frac{\Var\left(\mathcal{M}_T(u)[q]\right)}{T}}{\phi(u)^2\int_{\mathbb R}C_0(\tau) d\tau -\varepsilon} \to 0\,,
\end{eqnarray*}
as $Q\to\infty$.
As a consequence, Condition \emph{(d)} of Lemma \ref{lemma:NP12} is satisfied and the proof is concluded.
\end{proof}

\subsection{Proof of Proposition \ref{mono}}

\begin{proof}
From Theorem \ref{due} we have
\begin{equation}\label{duestar}
\lim_{T\rightarrow \infty }\frac{\Var\left( \mathcal{M}_{T}(u)\right) }{%
T^{2-2\beta _{\ell ^{\star }}}}=\frac{u^2\phi (u)^{2}}{2(1-2\beta
_{\ell ^{\star }})(1-\beta _{\ell ^{\star }})}(2\ell^\star +1)C_{\ell^\star }(0)^2.
\end{equation}
Let us study the variance of $m_{T;\ell^\star}(u)$. 
\begin{eqnarray*}
\Var(m_{T;\ell^\star}(u)) &=& \frac{u^2 \phi(u/\sigma_{\ell^\star})^2}{2\sigma_{\ell^\star}^2}\int_{(\mathbb S^2)^2\times [0,T]^2} \frac{C_{\ell^\star}(t-s)^2}{C_{\ell^\star}(0)^2}  P_{\ell^\star}(\langle x, y \rangle)^2  dxdydtds\\
&=&  \frac{u^2 \phi(u/\sigma_{\ell^\star})^2}{2\sigma_{\ell^\star}^2}\int_{[0,T]^2} \frac{(4\pi)^2}{2\ell^\star+1} \frac{C_{\ell^\star}(t-s)^2}{C_{\ell^\star}(0)^2}  dtds\\
&=&  \frac{u^2 \phi(u/\sigma_{\ell^\star})^2}{2\sigma_{\ell^\star}^2} 2T\int_{[0,T]} \frac{(4\pi)^2}{2\ell^\star+1} \left ( 1 -\frac{\tau}{T}\right )\frac{C_{\ell^\star}(\tau)^2}{C_{\ell^\star}(0)^2}  d\tau\,.
\end{eqnarray*} 
From Proposition \ref{lemma-var2nd} 
\begin{equation*}
\lim_{T\rightarrow \infty }\frac{2T}{T^{2-2\beta _{\ell^\star }}}%
\int_{0}^{T}\left ( 1 -\frac{\tau}{T}\right )C_{\ell^\star}^{2}(\tau)\,d\tau=\frac{C_{\ell^\star }(0)^{2}}{(1-\beta_{\ell^\star})(1-2\beta_{\ell^\star})}
\end{equation*}
so that 
\begin{equation}\label{m2}
\lim_{T\rightarrow \infty }\frac{\Var(m_{T;\ell^\star}(u))}{T^{2-2\beta _{\ell^\star }}} = \frac{u^2 \phi(u/\sigma_{\ell^\star})^2}{2\sigma_{\ell^\star}^2} \frac{(4\pi)^2}{2\ell^\star+1} \frac{1}{(1-\beta_{\ell^\star})(1-2\beta_{\ell^\star})}.
\end{equation}
Let us now compute the covariance between $\mathcal M_T(u)$ and $m_{T;\ell^\star}(u)$: by orthogonality of Wiener chaoses
\begin{eqnarray*}
&&\Cov\left( \mathcal{M}_{T}(u),m_{T;\ell^{\star
}}(u)\right) =\Cov\left ( \mathcal{M}_{T}(u)[2], m_{T;\ell^{\star }}(u)\right ) \\
&=&\frac{J_{2}(u)\,J_{2}\left( \frac{u}{\sigma _{\ell^{\star }}%
}\right) }{4}\int_{[0,T]^{2}}\int_{\mathbb{S}^{2}\times \mathbb{S}^{2}}%
\mathbb{E}\left[ H_{2}\left( Z(x,t)\right) H_{2}\left( \frac{Z_{\ell _{\star
}}(y,s)}{\sigma _{\ell^{\star }}}\right) \right] \,dx\,dy\,dt\,ds \\
&=&\frac{J_{2}(u)\,J_{2}\left( \frac{u}{\sigma _{\ell^{\star }}%
}\right) }{2}\int_{[0,T]^{2}}\int_{\mathbb{S}^{2}\times \mathbb{S}^{2}}%
\mathbb{E}\left[ Z(x,t)\frac{Z_{\ell^{\star }}(y,s)}{\sigma _{\ell _{\star
}}}\right] ^{2}\,dx\,dy\,dt\,ds 
\end{eqnarray*}
\begin{eqnarray*}
&=&\frac{J_{2}(u)\,J_{2}\left( \frac{u}{\sigma _{\ell^{\star }}%
}\right) }{2}\int_{[0,T]^{2}}\int_{\mathbb{S}^{2}\times \mathbb{S}^{2}}%
\mathbb{E}\left[ Z_{\ell^\star}(x,t)\frac{Z_{\ell^{\star }}(y,s)}{\sigma _{\ell _{\star
}}}\right] ^{2}\,dx\,dy\,dt\,ds\\
&=&\frac{J_{2}(u)\,J_{2}\left( \frac{u}{\sigma _{\ell^{\star }}}%
\right) }{2\,\sigma _{\ell^{\star }}^{2}}\int_{[0,T]^{2}}\int_{\mathbb{S}%
^{2}\times \mathbb{S}^{2}}C_{\ell^{\star }}(t-s)^{2}\,\left( \frac{2\ell^{\star }+1}{4\pi }\right) ^{2}\,P_{\ell^{\star }}\left( \langle x,y\rangle
\right) ^{2}\,dx\,dy\,dt\,ds \\
&=&(2\ell^{\star }+1)\frac{J_{2}(u)\,J_{2}\left( \frac{u}{\sigma
_{\ell^{\star }}}\right) }{2\,\sigma _{\ell^{\star }}^{2}}%
\int_{[0,T]^{2}}\,C_{\ell^{\star }}(t-s)^{2}\,dt\,ds.
\end{eqnarray*}%
As before 
\begin{equation}\label{cov2}
\lim_{T\to\infty} \frac{\Cov\left( \mathcal{M}_{T}(u),m_{T;\ell^{\star
}}(u)\right)}{T^{2-2\beta_{\ell^\star}}} = (2\ell^{\star }+1)\frac{J_{2}(u)\,J_{2}\left( \frac{u}{\sigma
_{\ell^{\star }}}\right) }{2\,\sigma _{\ell^{\star }}^{2}}\frac{C_{\ell^\star }(0)^{2}}{(1-\beta_{\ell^\star})(1-2\beta_{\ell^\star})}\, .
\end{equation}
Plugging \eqref{duestar} and \eqref{m2} into \eqref{cov2} we get 
\begin{flalign*}
&\lim_{T\to\infty} \Corr\left( \mathcal{M}_{T}(u),m_{T;\ell^{\star
}}(u)\right) = \lim_{T\to\infty} \frac{\Cov\left( \mathcal{M}_{T}(u),m_{T;\ell^{\star
}}(u)\right)}{\sqrt{\Var(\mathcal M_T(u)) \Var(m_{T;\ell^\star}(u))}}= 1\,,
\end{flalign*} 
that concludes the proof. 
\end{proof}

\begin{appendix}

\section{Proofs of technical Lemmas}\label{app}

From Remark \ref{remComput} 
\begin{eqnarray}\label{stima1}
\int_{[0,T]^{2}}\,C_{\ell}(t-s)^{2}\,dtds &=& 2T\int_{[0,T]}\left ( 1 - \frac{\tau}{T}\right ) C_{\ell}(\tau)^{2}\,d\tau\notag \\
&\le& 2T\left ( \int_{[0,M]}C_{\ell}(\tau)^{2}\,d\tau + \int_{[M,T]}C_{\ell}(\tau)^{2}\,d\tau\right )\notag\\
&\le& 2TC_{\ell}(0)^2\left ( M + (\varepsilon +1)^2 \int_{[M,T]} g_{\beta_{	\ell}}(\tau)^2\,d\tau\right ).
\end{eqnarray}

\begin{proof}[Proof of Lemma \ref{tec2}] 
For $\beta_\ell=1$ from \eqref{stima1} we have, for $T>\max(1,M)$,
\begin{flalign*}
&\frac{1}{T}\int_{[0,T]^{2}}\,C_{\ell}(t-s)^{2}\,dtds \\
&\le 2C_\ell(0)^2\left (M + (\varepsilon+1)^2\int_{[M,T]}g_1(\tau)^{2}\,d\tau\right )\le 2C_\ell(0)^2\left (M + (\varepsilon+1)^2\int_{\mathbb R} g_1(\tau)^{2}\,d\tau\right )\\
&\le 2C_\ell(0)^2\left (M + (\varepsilon+1)^2\int_{\mathbb R}|g_1(\tau)|\,d\tau\right )= 2C_\ell(0)^2\left (M + 2\frac{(\varepsilon+1)^2}{\alpha -1}\right ),
\end{flalign*}
where we used the fact that $|g_1(\tau)|\le 1$ for every $\tau\in \mathbb R$, see \eqref{gbeta}.
\end{proof}

\begin{proof}[Proof of Lemma \ref{tec2_1}]
Let us start with \eqref{RHS}. For $\ell\in \mathcal I^\star$,  $\beta_\ell <1$ we have 
\begin{flalign*}
&\frac{1}{T^{2-2\beta _{\ell ^{\star }}}}\int_{[0,T]^{2}}\,C_{\ell}(t-s)^{2}\,dtds \le   \frac{2C_\ell(0)^2}{T^{1-2\beta _{\ell ^{\star }}}}\left ( M + (\varepsilon+1)^2\int_{[M,T]}(1 +\tau)^{-2\beta_{\ell^\star}}\,d\tau\right )\\
&=  \frac{2C_\ell(0)^2}{T^{1-2\beta _{\ell ^{\star }}}}\left ( M + \frac{(\varepsilon+1)^2}{-2\beta_{\ell^\star}+1}\left ( (1+T)^{-2\beta_{\ell^\star}+1} - (1+M)^{-2\beta_{\ell^\star}+1}  \right ) \right )\\
&\le   \frac{2C_\ell(0)^2}{T^{1-2\beta _{\ell ^{\star }}}}\left ( M + \frac{(\varepsilon+1)^2}{-2\beta_{\ell^\star}+1}(1+T)^{-2\beta_{\ell^\star}+1}   \right )\\
&\le   2C_\ell(0)^2\left ( M + \frac{(\varepsilon+1)^2}{-2\beta_{\ell^\star}+1}\left (1+\frac1M \right )^{-2\beta_{\ell^\star}+1}   \right ),
\end{flalign*} 
where for the last inequality we recall that $T>\max(1,M)$. 

Let us now prove \eqref{RHS2}. From \eqref{stima1}, for $\ell\notin \mathcal I^\star$, $\ell \ge 1$ and $\beta_\ell<1$,
\begin{eqnarray*}
\frac{1}{T^{2-2\beta _{\ell ^{\star }}}}\int_{[0,T]^{2}}\,C_{\ell}(t-s)^{2}\,dtds 
&\le &  \frac{2C_\ell(0)^2}{T^{1-2\beta _{\ell ^{\star }}}}\left ( M + (\varepsilon+1)^2\int_{[M,T]}(1 +\tau)^{-2\beta_{\ell^{\star\star}}}\,d\tau\right ).
\end{eqnarray*}
Now for $2\beta_{\ell^{\star\star}}=1$ we have 
$$
\frac{1}{T^{1-2\beta _{\ell ^{\star }}}}\int_{[M,T]}(1 +\tau)^{-2\beta_{\ell^{\star\star}}}\,d\tau \le 2 m(\beta_{\ell^\star}),
$$
while for $2\beta_{\ell^{\star\star}}<1$ we have 
$$
\frac{1}{T^{1-2\beta _{\ell ^{\star }}}}\int_{[M,T]}(1 +\tau)^{-2\beta_{\ell^{\star\star}}}\,d\tau \le \frac{1}{-2\beta _{\ell ^{\star\star }}+1}\left (1 + \frac1M      \right )^{-2\beta_{\ell^{\star\star}}+1},
$$
otherwise 
$$
\frac{1}{T^{1-2\beta _{\ell ^{\star }}}}\int_{[M,T]}(1 +\tau)^{-2\beta_{\ell^{\star\star}}}\,d\tau \le \frac{1}{2\beta _{\ell ^{\star\star }}-1}\left (\frac{1}{1+M}    \right )^{2\beta_{\ell^{\star\star}}-1}\,,
$$
which concludes the proof.
\end{proof}

Let us write 
\begin{align}\label{qq}
&k_{\ell_1\dots \ell_q}(T)= 2T\int_{[0,T]} \left (1 -\frac{\tau}{T} \right ) C_{\ell_1}(\tau)\cdots C_{\ell_q}(\tau)\,d\tau\notag\\
&= 2T\left ( \int_{[0,M]} \left (1 -\frac{\tau}{T} \right ) C_{\ell_1}(\tau)\cdots C_{\ell_q}(\tau)\,d\tau + \int_{[M,T]} \left (1 -\frac{\tau}{T} \right ) C_{\ell_1}(\tau)\cdots C_{\ell_q}(\tau)\,d\tau\right ).
\end{align}

\begin{proof} [Proof of Lemma \ref{lemmaqq}] The proof is similar to the proof of Lemma \ref{tec2}. 
Assume that there is at least one index $j\in \lbrace 1,\dots,q\rbrace$ such that 
$\beta_{\ell_j}=1$. Let $U:=U(\ell_1,\dots,\ell_q) =\lbrace j\in \lbrace 1,\dots,q \rbrace : \beta_{\ell_j}=1\rbrace$.  We have, from \eqref{qq}, for $T>\max(1,M)$,
\begin{eqnarray*}
\frac{k_{\ell_1\dots \ell_q}(T)}{T} &\le&2C_{\ell_1}(0)\dots C_{\ell_q}(0)\Bigg ( M + (\varepsilon+1)^q \int_{[M,T]}  (1+\tau)^{-\#U\alpha -(\beta_{\ell_{1}}+\dots \beta_{\ell_{q}}-\#U)}\,d\tau\Bigg )\\
&\le&2C_{\ell_1}(0)\dots C_{\ell_q}(0)\left ( M + (\varepsilon+1)^q \int_{[M,T]}  (1+\tau)^{-\#U\alpha -(q-\#U)\min(\beta_0,\beta_{\ell^\star})}\,d\tau\right )\\
&\le& 2C_{\ell_1}(0)\dots C_{\ell_q}(0)\Bigg ( M + \frac{(\varepsilon+1)^q}{(q-\#U)\min(\beta_0,\beta_{\ell^\star}) + \#U \alpha -1}\cr
&&\times  \left (  \frac{1}{1+M}   \right)^{(q-\#U)\min(\beta_0,\beta_{\ell^\star})+ \#U \alpha -1}\Bigg )\notag \\
&\le& 2C_{\ell_1}(0)\dots C_{\ell_q}(0)\Bigg ( M + \frac{1}{q\min(\beta_0,\beta_{\ell^\star}) + \#U (\alpha-\min(\beta_0,\beta_{\ell^\star})) -1} \cr
&&\times \left (  \frac{\varepsilon+1}{(1+M)^{\min(\beta_0,\beta_{\ell^\star})}}  \right)^q \left (  \frac{1}{1+M}   \right)^{\#U (\alpha- \min(\beta_0,\beta_{\ell^\star}))-1}\Bigg )\notag \\
&\le& 2C_{\ell_1}(0)\dots C_{\ell_q}(0)\left ( M + \frac{1}{q\min(\beta_0,\beta_{\ell^\star})} \left (  \frac{\varepsilon+1}{(1+M)^{\min(\beta_0,\beta_{\ell^\star})}}  \right)^q\right)\,,
\end{eqnarray*}
which concludes the proof.
\end{proof}

\begin{proof}[Proof of Lemma \ref{tecq_1}] The proof is similar to the proof of Lemma \ref{tec2_1}. 
For $\ell_1,\dots,\ell_q\in \mathcal I^\star$,  $\beta_{\ell_j}<1$ for every $j$ and $T>\max(1,M)$, from \eqref{qq}, we have
\begin{eqnarray*}
\frac{k_{\ell_1\dots \ell_q}(T)}{T^{2-q\beta_{\ell^\star}}} &\le& \frac{2C_{\ell_1}(0)\dots C_{\ell_q}(0)}{T^{1-q\beta_{\ell^\star}}}\Big (M + (\varepsilon +1)^q\int_{[M,T]} (1+\tau)^{-q\beta_{\ell^\star}}\,d\tau\Big )\\
&\le& 2C_{\ell_1}(0)\dots C_{\ell_q}(0)\Big (M + \frac{(\varepsilon +1)^q}{1-q\beta_{\ell^\star}}\left ( 1 + \frac1M    \right )^{1-q\beta_{\ell^\star}}\Big ).
\end{eqnarray*} 
Finally for $(\ell_1,\dots,\ell_q)\notin \mathcal I^\star$, $\ell_j\ge 1$, $\beta_{\ell_j}<1$ for every $j$ and $T>\max(1,M)$, from \eqref{qq} (note that
$\beta_{\ell_1}+\dots \beta_{\ell_q} \ge \beta_{\ell^{\star\star}} + (q-1) \beta_{\ell^\star} > q\beta_{\ell^\star})$, we have
\begin{eqnarray*}
\frac{k_{\ell_1\dots \ell_q}(T)}{T^{2-q\beta_{\ell^\star}}} &\le& \frac{2C_{\ell_1}(0)\dots C_{\ell_q}(0)}{T^{1-q\beta_{\ell^\star}}}\Big (M + (\varepsilon +1)^q\int_{[M,T]} (1+\tau)^{- (\beta_{\ell^{\star\star}} + (q-1) \beta_{\ell^\star})}\,d\tau\Big )\\
&\le& 2C_{\ell_1}(0)\dots C_{\ell_q}(0)\Big (M + (\varepsilon +1)^q\frac{1}{T^{1-q\beta_{\ell^\star}}}\int_{[M,T]} (1+\tau)^{- (\beta_{\ell^{\star\star}} + (q-1) \beta_{\ell^\star})}\,d\tau\Big ).
\end{eqnarray*} 
Now if $\beta_{\ell^{\star\star}} + (q-1) \beta_{\ell^\star}<1$ then 
$$
\frac{1}{T^{1-q\beta_{\ell^\star}}}\int_{[M,T]} (1+\tau)^{- (\beta_{\ell^{\star\star}} + (q-1) \beta_{\ell^\star})}\,d\tau\le \frac{\left (1 + \frac1M      \right )^{-(\beta_{\ell^{\star\star}} + (q-1) \beta_{\ell^\star})+1}}{-(\beta_{\ell^{\star\star}} + (q-1) \beta_{\ell^\star})+1}\,;
$$
if $\beta_{\ell^{\star\star}} + (q-1) \beta_{\ell^\star}=1$, then
$$
\frac{1}{T^{1-q\beta_{\ell^\star}}}\int_{[M,T]} (1+\tau)^{- 1}\,d\tau\le \frac{1}{T^{1-2\beta_{\ell^\star}}}\int_{[M,T]} (1+\tau)^{- 1}\,d\tau \le 2 m(\beta_{\ell^\star}),
$$
where $m(\beta_{\ell^\star})$ is a constant defined in Lemma \ref{tec2}. Finally if $\beta_{\ell^{\star\star}} + (q-1) \beta_{\ell^\star}>1$ then
$$
\frac{1}{T^{1-q\beta_{\ell^\star}}}\int_{[M,T]} (1+\tau)^{- (\beta_{\ell^{\star\star}} + (q-1) \beta_{\ell^\star})}\,d\tau\le \frac{(M+1)^{-(\beta_{\ell^{\star\star}} + (q-1) \beta_{\ell^\star}-1)}}{\beta_{\ell^{\star\star}} + (q-1) \beta_{\ell^\star}-1} 
$$
and the proof is concluded.
\end{proof}

\end{appendix}

\noindent {\sc Dipartimento di Matematica, Universit\`a degli Studi di Roma ``Tor Vergata"}

\noindent \emph{E-mail address:} \texttt{marinucc@mat.uniroma2.it} 

\vspace{.2cm}

\noindent {\sc Dipartimento di Matematica e Applicazioni, Universit\`a degli Studi di Milano-Bicocca}

\noindent \emph{E-mail address:} \texttt{maurizia.rossi@unimib.it} 

\vspace{.2cm}

\noindent {\sc Dipartimento di Economia, Universit\`a degli Studi ``G. D'Annunzio" Chieti-Pescara}

\noindent \emph{E-mail address:} \texttt{anna.vidotto@unich.it} (corresponding author)


\begin{thebibliography}{99}
\bibitem[1]{adlertaylor} R.~J. Adler, J.~E.Taylor (2007) \textit{Random Fields
and Geometry}. Springer Monographs in Mathematics, Springer.
\vspace{-.25cm}
\bibitem[2]{AW:09} J.-~M. Az\"ais, Wschebor, M. (2009) \textit{Level Sets and Extrema of Random Processes and Fields}. Wiley.
\vspace{-.25cm}
\bibitem[3]{BP17} C. Berg, E. Porcu (2017) From Schoenberg coefficients to Schoenberg functions. \textit{Constructive Approximation}, 45, 2, 217--241.
\vspace{-.25cm}
\bibitem[4]{Ber02} M.~V. Berry (2002) Statistics of nodal lines and points in chaotic quantum billiards: perimeter corrections, fluctuations, curvature. \textit{Journal of Physics A} 35(13), 3025--3038.
\vspace{-.25cm}
\bibitem[5]{Bin72} N.~H. Bingham (1972) A Tauberian Theorem for Integral Transforms of Hankel Type, \textit{Journal of the London Mathematical Society}, 2-5, 3, 493--503.
\vspace{-.25cm}
\bibitem[6]{BS19} N.~H. Bingham, T.~L. Symons (2019) Gaussian random fields on the sphere and sphere cross line. \textit{Stochastic Processes and their Applications}, in press.
\vspace{-.25cm}
\bibitem[7]{Cam19} V. Cammarota (2019) Nodal area distribution for arithmetic random waves. \textit{Transactions of the American Mathematical Society} 372,  5, 3539--3564.
\vspace{-.25cm}
\bibitem[8]{CM19} V. Cammarota, D. Marinucci (2019) On the Correlation of Critical Points and Angular Trispectrum for Random Spherical Harmonics. \textit{Preprint arXiv:1907.05810}.
\vspace{-.25cm}
\bibitem[9]{CM2018} V. Cammarota, D. Marinucci (2018) A Quantitative Central
Limit Theorem for the Euler-Poincar\'{e} Characteristic of Random Spherical
Eigenfunctions, \textit{Annals of Probability}, 46, 6, 3188--3288.
\vspace{-.25cm}
\bibitem[10]{CD:12} G. Chiles, D\'elfiner (2012) \textit{Geostatistics}. Wiley Series in Probability and Statistics.
\vspace{-.25cm}
\bibitem[11]{Ch:05} G. Christakos (2005) \textit{Random Field Models in Earth Sciences}. Academic Press.
\vspace{-.25cm}
\bibitem[12]{DM79} R.~L. Dobrushin, P. Major  (1979) Non-Central Limit Theorems for Non-Linear Functionals of Gaussian Fields. \textit{Z. Wahrscheinlichkeitstheorie verw. Gebiete}  50, 27--52.
\vspace{-.25cm}
\bibitem[13]{DehTaq} H. Dehling and M. Taqqu (1989) The empirical process of some
long-range dependent sequences with an application to U-statistics, \emph{%
Annals of Statistics} 17, 4,1767--1783.
\vspace{-.25cm}
\bibitem[14]{Ess42} C.~G. Esseen (1942) On the Liapunoff limit of error in the theory of probability. \textit{Arkiv f\"or Matematik, Astronomi och Fysik} A28, 1--19.
\vspace{-.25cm}
\bibitem[15]{KKW13} M. Krishnapur, P. Kurlberg, I. Wigman (2013) Nodal length fluctuations for arithmetic random waves. \textit{Annals of Mathematics} 177, 2, 699--737.
\vspace{-.25cm}
\bibitem[16]{IPV95} P. Imkeller, V. P\'erez-Abreu, J. Vives (1995) Chaos expansions of double intersection local time of Brownian motion in $\mathbb R^q$ and renormalization. 
\textit{Stochastic Processes and their Applications}, 56(1):1--34.
\vspace{-.25cm}
\bibitem[17]{LO13} N. Leonenko, A. Olenko (2013) Tauberian and Abelian Theorems for Long-range Dependent Random Fields. \textit{Methodology and Computing in Applied Probability}, 15, 715--742.
\vspace{-.25cm}
\bibitem[18]{LR-M} N. N. Leonenko, M. D. Ruiz-Medina (2018) Increasing domain asymptotics for the first Minkowski functional of spherical random fields. \textit{Theory of Probability and Mathematical Statistics} 97, 127--149.
\vspace{-.25cm}
\bibitem[19]{MM18} C. Ma, A. Malyarenko (2020) Time-Varying Isotropic Vector Random Fields on Compact Two-Point Homogeneous Spaces \textit{Journal of Theoretical Probability}  33, 319--339.
\vspace{-.25cm}
\bibitem[20]{MaPeCUP} D. Marinucci, G. Peccati (2011) \textit{Random Fields on the
Sphere: Representations, Limit Theorems and Cosmological Applications},
Cambridge University Press.
\vspace{-.25cm}
\bibitem[21]{MPRW16}  D. Marinucci, G. Peccati, M. Rossi, I. Wigman (2016) Non-universality of nodal length distribution for arithmetic random waves. \textit{Geometric and Functional Analysis} 26, 3, 926--960.
\vspace{-.7cm}
\bibitem[22]{MR2015} D. Marinucci, M. Rossi (2015) Stein-Malliavin approximations
for nonlinear functionals of random eigenfunctions on $\mathbb S^{d}$, \textit{Journal of Functional Analysis}, 268, 8, 2379--2420.
\vspace{-.25cm}
\bibitem[23]{MRW} D. Marinucci, M. Rossi, I. Wigman (2020) The asymptotic
equivalence of the sample trispectrum and the nodal length for random
spherical harmonics. \textit{Annales de l'Institut Henri Poincar\'e, Probabilit\'es et Statistiques}, 56, 1, 374--390.
\vspace{-.25cm}
\bibitem[24]{MV16} D. Marinucci, S. Vadlamani (2016) High-frequency asymptotics of Lipschitz-Killing curvatures of excursion sets on the sphere. \textit{The Annals of Applied Probability} 26, 1, 462--506.
 \vspace{-.7cm}
\bibitem[25]{MW2014} D. Marinucci, I. Wigman (2014) On nonlinear functionals of
random spherical eigenfunctions, \textit{Communications in Mathematical
Physics}, 327, 3, 849--872.
\vspace{-.25cm}
\bibitem[26]{KN:17} G.~R. North and K.~-Y. Kim. (2017) \textit{Energy Balance Climate Models}. Wiley Series in Atmospheric Physics and Remote Sensing.
\vspace{-.25cm}
\bibitem[27]{noupebook} I. Nourdin, G. Peccati (2012) \textit{Normal
Approximations Using Malliavin Calculus: from Stein's Method to Universality}, Cambridge University Press.
\vspace{-.25cm}
\bibitem[28]{NPR19} I. Nourdin, G. Peccati, M. Rossi (2019) Nodal statistics of planar random waves. \textit{Communications in Mathematical Physics} 369, 1, 99--151.
\vspace{-.25cm}
\bibitem[29]{NP:05} D. Nualart and G. Peccati (2005) Central limit theorems for sequences of multiple stochastic integrals. \textit{Annals of Probability} 33, 1, 177-193.
\vspace{-.25cm}
\bibitem[30]{PV20} G. Peccati, A. Vidotto (2020) Gaussian Random Measures Generated by Berry's Nodal Sets. \textit{Journal of Statistical Physics} 178, 4, 996--1027.
 \vspace{-.25cm}
\bibitem[31]{Rossi2018} M. Rossi (2019) Random nodal lengths and Wiener chaos,
\textit{Probabilistic Methods in Geometry, Topology and Spectral Theory}, Contemporary Mathematics Series, 739, 155--169.
\vspace{-.25cm}
\bibitem[32]{Sze75} G. Szeg\"o (1975) \emph{Orthogonal polynomials}, vol. XXIII, 4th edn. American Mathematical Society, Providence, RI.
\vspace{-.25cm}
\bibitem[33]{Ta:79} M. Taqqu (1979) Convergence of Integrated Processes of Arbitrary Hermite Rank, \textit{ Z. Wahrscheinlichkeitstheorie verw. Gebiete}, 50, 53--83.
\vspace{-.25cm}
\bibitem[34]{Todino1} A.~P. Todino (2019) A Quantitative Central Limit Theorem for
the Excursion Area of Random Spherical Harmonics over Subdomains of $\mathbb S^2$, \textit{Journal of Mathematical Physics} 60, 023505.
 \vspace{-.25cm}
\bibitem[35]{VT13} M.~K. Veillette and M.~S. Taqqu (2013) Properties and numerical evaluation of the Rosenblatt distribution. \textit{Bernoulli} 19, 3,  982--1005.
\vspace{-.25cm}
\bibitem[36]{Wig} I. Wigman (2010) Fluctuations of the nodal length of random
spherical harmonics, \textit{Communications in Mathematical Physics}, 298,
3, 787--831.
\end{thebibliography}
\end{document}